\newtheorem{theorem}{Theorem}
\newtheorem{proposition}{Proposition}
\newtheorem{lemma}{Lemma}
\newtheorem{corollary}{Corollary}
\theoremstyle{definition}
\theoremstyle{remark}
\newtheorem{remark}{Remark}
\numberwithin{equation}{section}
\newcommand{\field}[1]{\ensuremath{\mathbb{#1}}}
\newcommand{\C}{\field{C}}
\newcommand{\HH}{\field{H}}
\newcommand{\R}{\field{R}}
\newcommand{\Z}{\field{Z}}
\newcommand{\complex}[1]{\mathsf{#1}} 
\newcommand{\SSS}{\complex{S}}
\newcommand{\BBB}{\complex{B}}
\newcommand{\KKK}{\complex{K}}
\newcommand{\AAA}{\complex{A}}
\newcommand{\CCC}{\complex{C}}
\newcommand{\Ka}{K\"{a}hler\:}
\newcommand{\Te}{Teichm\"{u}ller\:}
 \DeclareMathOperator{\id}{id}
\DeclareMathOperator{\Hom}{Hom} 
\DeclareMathOperator{\im}{Im} 
 \DeclareMathOperator{\PSL}{PSL}
 \DeclareMathOperator{\re}{Re}
\newcommand{\del}{\partial}
\newcommand{\delb}{\bar\partial}
\newcommand{\si}{\sigma}
\newcommand{\ga}{\gamma}
\newcommand{\Ga}{\Gamma}
\newcommand{\ihalf}{\frac{\sqrt{-1}}{2}}
\newcommand{\curly}[1]{\mathscr{#1}}
\newcommand{\cE}{\curly{E}}
\newcommand{\cF}{\curly{F}}
\newcommand{\cH}{\curly{H}}
\newcommand{\cI}{\curly{I}}
\newcommand{\cL}{\curly{L}}
\newcommand{\cM}{\curly{M}}
\newcommand{\cR}{\curly{R}}
\newcommand{\cS}{\curly{S}}
\newcommand{\vphi}{\varphi}
\newcommand{\bk}{\backslash}
\newcommand{\pa}{\partial}
\newcommand{\ov}{\overline}
\newcommand{\vep}{\varepsilon}
\newcommand{\z}{\bar{z}}
\begin{document}
\title[Potentials for WP and TZ metrics]{Potentials and Chern forms for Weil-Petersson and Takhtajan-Zograf metrics on moduli spaces}
\author{Jinsung Park}
\address{School of Mathematics, Korea Institute for Advanced Study, 207-43, Hoegiro 85, Dong-daemun-gu, Seoul, 130-722, Korea}
\email{jinsung@kias.re.kr}
\author{Leon A. Takhtajan}
\address{Department of Mathematics,
Stony Brook University, Stony Brook, NY 11794-3651, USA;
The Euler International Mathematical Institute, Saint Petersburg, Russia}
\email{leontak@math.sunysb.edu}
\author{Lee-Peng Teo}
\address{Department of Applied Mathematics, University of Nottingham Malaysia Campus,
Jalan Broga, 43500, Semenyih, Selangor, Malaysia}
\email{LeePeng.Teo@nottingham.edu.my}
\thanks{ Key Words: Teichm\"uller space, Schottky space, Weil-Petersson metric, Takhtajan-Zograf metric, Liouville action, Chern form, renormalized volume.}
\thanks{ 2010 Mathematics Subject Classification. Primary 14H60, 32G15 ; Secondary 53C80.}
\maketitle
\begin{abstract}
For the TZ metric on the moduli space $\cM_{0,n}$ of $n$-pointed rational curves, 
we construct a \Ka potential in terms of
the Fourier coefficients of the Klein's Hauptmodul.
 We define the space $\mathfrak{S}_{g,n}$ as holomorphic fibration $\mathfrak{S}_{g,n}\rightarrow\mathfrak{S}_{g}$ over the Schottky space $\mathfrak{S}_{g}$ of compact Riemann surfaces of genus $g$, where the fibers are configuration spaces of $n$ points.  For the tautological line bundles $\mathscr{L}_{i}$ over $\mathfrak{S}_{g,n}$ we define Hermitian metrics $h_{i}$ in terms of Fourier coefficients of a covering map $J$ of the Schottky domain. We define the regularized classical Liouville action $S$ and show that $\exp\{S/\pi\}$ is a Hermitian metric in the line bundle
$\cL=\otimes_{i=1}^{n}\cL_{i}$ over $\mathfrak{S}_{g,n}$. We explicitly compute the Chern forms of these Hermitian line bundles
$$c_{1}(\cL_{i},h_{i})=\frac{4}{3}\omega_{\mathrm{TZ},i},\quad c_{1}(\cL,\exp\{S/\pi\})=\frac{1}{\pi^{2}}\omega_{\mathrm{WP}}.$$
We prove that a smooth real-valued function $-\cS=-S+\pi\sum_{i=1}^{n}\log h_{i}$ on $\mathfrak{S}_{g,n}$, a potential for this special difference of WP and TZ metrics,
coincides with the renormalized hyperbolic volume of a corresponding Schottky $3$-manifold. We extend these results to the quasi-Fuchsian groups of type $(g,n)$.
\end{abstract}
\tableofcontents
\section{Introduction}

Weil introduced the Weil-Petersson (WP) metric on the moduli spaces of Riemann surfaces by using the Petersson inner product on the holomorphic cotangent spaces, the complex vector spaces of cusp forms of weight $4$. Ahlfors proved that the WP metric is K\"{a}hler and its Ricci, holomorphic sectional and scalar curvatures are all negative \cite{Ahl61,Ahl62}, and Wolpert found a closed formula for the Riemann tensor of the WP metric and obtained explicit bounds for its curvatures \cite{W86}.

In \cite{TZ87a,TZ87b} it was shown that for the moduli space $\cM_{0,n}$ of marked Riemann surfaces of type $(0,n)$, $n>3$ ($n$-pointed rational curves) and for the Schottky space $\mathfrak{S}_{g}$ of compact Riemann surfaces of genus $g>1$ the WP metric has global K\"{a}hler potential, the so-called \emph{classical Liouville action} (for precise definitions, see Sects.~\ref{Basic} and \ref{Liouv}).
In \cite{TZ88,TZ91} a new \Ka metric was introduced on the moduli space $\mathfrak{M}_{g,n}$ of Riemann surfaces of genus $g$ with $n>0$ punctures, $3g-3+n>0$. In \cite{Obitsu99,ObitsuW08,Weng01,W07} it was called Takhtajan-Zograf (TZ) metric (for its precise definition, see Sect.~\ref{Ka metrics}).  Unlike the WP metric, the curvature properties of the TZ metric are not known.

Here we present explicit formula for a \Ka potential $h_{i}$ of the $i$-th TZ metric on the moduli space $\cM_{0,n}$, $i=1,\dots,n$. Specifically, in Proposition \ref{TZ-potential} we prove that $h_{i}$ is expressed in terms of the first Fourier coefficients of Fourier expansions of the Klein's Hauptmodul $J$ at the cusps, introduced in \eqref{J-1}-\eqref{J-n}. The functions $h_{i}$ on $\cM_{0,n}$ provide explicit expressions for trivializations of the Hermitian metrics in the (holomorphically trivial) tautological line bundles $\cL_{i}$ on $\cM_{0,n}$, introduced in \cite{Weng01,W07}. Proposition \ref{TZ-potential} is the statement that the first Chern form of the Hermitian line bundle $\cL_{i}$ is $\dfrac{4}{3}\omega_{\mathrm{TZ},i}$, the symplectic form of the $i$-th TZ metric on $\cM_{0,n}$, $i=1,\dots,n$.

The function $H=h_{1}\dots h_{n-1}/h_{n}$ on $\cM_{0,n}$ determines a Hermitian metric in the line bundle $\lambda_{0,n}$ over the moduli space $\mathfrak{M}_{0,n}$ of type $(0,n)$ Riemann surfaces, introduced by Zograf \cite{Z89} (see Lemma \ref{Hermite metric} and Sect.~\ref{genus 0} for details). We show (see Corollary \ref{TZ--potential}) that on $\mathfrak{M}_{0,n}$
$$c_{1}(\lambda_{0,n},H)=\frac{4}{3}\omega_{\mathrm{TZ}},$$
where $\omega_{\mathrm{TZ}}=\omega_{\mathrm{TZ},i}+\cdots+\omega_{\mathrm{TZ},n}$ is the symplectic form of the TZ metric on $\mathfrak{M}_{0,n}$. Comparison with the known result (see \cite{TZ87a,Z89})
$$c_{1}(\lambda_{0,n},\exp\{S/\pi\})=\frac{1}{\pi^2}\omega_{\mathrm{WP}},$$
where $S$ is the classical Liouville action and $\omega_{\mathrm{WP}}$ is the symplectic form of the WP metric,
shows that a real-valued function $\cS=S-\pi H$ on $\frak{M}_{0,n}$ is a global \Ka potential for a special linear combination $\omega_{\mathrm{WP}}-\dfrac{4\pi^{2}}{3}\omega_{\mathrm{TZ}}$ of the WP and TZ metrics. 

We also study WP and TZ metrics on the deformation spaces of punctured Riemann surfaces of genus $g>1$. Namely, we introduce the Schottky space $\mathfrak{S}_{g,n}$ of type $(g,n)$ Riemann surfaces as a holomorphic fibration $\mathfrak{S}_{g,n}\rightarrow\mathfrak{S}_{g}$ whose fibers are configuration spaces of $n$ points (for details, see Sect.~\ref{S space}).  Denote by $J$ the corresponding covering map of the Schottky domain $\Omega$ and put $h_{i}=|a_i(1)|^2$, where $a_i(1)$ are the first Fourier coefficients of $J$  at the punctures $z_i$, $i=1,\ldots,n$, given by \eqref{F-J}. In Lemma \ref{h tautological} we prove that $h_{i}$ determine Hermitian metrics on the tautological line bundles $\mathscr{L}_{i}$ --- holomorphic line bundles dual to the vertical tangent bundle on $\mathfrak{S}_{g,n}$ along the fibers of the projection $p_{i}:\mathfrak{S}_{g,n}\rightarrow\mathfrak{S}_{g,n-1}$ which `forgets' the marked point $w_{i}$, $i=1,\dots,n$.

In Sect.~\ref{S domains} we define regularized classical Liouville action $S$ and prove that $\exp\{S/\pi\}$
determines a Hermitian metric in the holomorphic line bundle
$\mathscr{L}=\cL_{1}\otimes\cdots\otimes\cL_{n}$ over $\mathfrak{S}_{g,n}$ (see Lemma \ref{transform}). Sect.~\ref{s:potential} contains the main results of the paper. Thus in Sect.~\ref{TZ potential} we present explicit potentials for the TZ metrics on $\cM_{0,n}$, and in Theorems \ref{L-TZW} and \ref{S-TZWP} we explicitly compute canonical connections and Chern forms of the Hermitian line bundles $(\cL_{i},h_{i})$ and $(\cL,\exp\{S/\pi\})$. Namely, we show that
 \begin{align}
 c_{1}(\cL_{i},h_{i}) & =\frac{4}{3}\omega_{\mathrm{TZ},i},\quad i=1,\dots,n, \label{L-i}\\
c_{1}(\cL,\exp\{S/\pi\})& =\frac{1}{\pi^{2}}\omega_{\mathrm{WP}}. \label{L-WP}
\end{align}
Here $\omega_{\mathrm{WP}}$ and $\omega_{\mathrm{TZ}}$ are, respectively, symplectic forms of the WP and TZ metrics on $\mathfrak{S}_{g,n}$.

The statement that the first Chern class of the line bundles $\cL_{i}$ is $\frac{4}{3}\omega_{\mathrm{TZ},i}$ was proved in \cite{TZ91} at the level of cohomology classes and in \cite{Weng01,W07} at the level of Chern forms. Hermitian metrics $h_{i}$ in the line bundles $\cL_{i}$ on $\mathfrak{S}_{g,n}$ provide explicit expressions for the pullbacks of the Hermitian metrics in tautological line bundles over the moduli space $\cM_{g,n}$ of $n$-pointed curves of genus $g>1$, introduced in \cite{Weng01,W07}.

The quantity
$$
\cS=S-\pi \sum_{i=1}^n \log h_{i}
$$
is a smooth real-valued function on the Schottky space $\mathfrak{S}_{g,n}$. It follows from \eqref{L-i} and \eqref{L-WP} that $-\cS$ is a \Ka potential for a special linear combination of the WP and TZ metrics,
\begin{equation}\label{e:main result}
\delb\del \cS=-2\sqrt{-1}\left(\omega_{\mathrm{WP}}-\frac{4\pi^{2}}{3}\omega_{\mathrm{TZ}}\right),
\end{equation}
where $\del$ and $\delb$ are $(1,0)$ and $(0,1)$ components of the de Rham differential on $\mathfrak{S}_{g,n}$. This linear combination, with the overall factor $1/12\pi$, is precisely the one that appears in the local index theorem for families on punctured Riemann surfaces for $k=0,1$ in \cite[Theorem 1]{TZ91}. 

In Sect.~\ref{s:q-Fuchsian} we extend the approach in \cite{TT03} to quasi-Fuchsian groups of type $(g,n)$. Namely,  we define the classical Liouville action and  in Theorem  \ref{qF case}
prove that it is a \Ka potential of the WP metric on the quasi-Fuchsian deformation space. In Sect.~\ref{Holography} we study renormalized volumes of the corresponding Schottky and quasi-Fuchsian $3$-manifolds. In Theorem \ref{S holography} we prove that the renormalized hyperbolic volume of the corresponding Schottky $3$-manifold is related to the above-mentioned function $\cS$ and in Theorem \ref{qF holography} we prove that for quasi-Fuchsian $3$-manifolds it is related to the regularized Liouville action. These extend the results obtained in \cite{TT03}  to punctured Riemann surfaces.
\subsection*{Acknowledgements}
The work of J.P. was partially supported by SRC - Center for Geometry and its Applications - grant No.~2011-0030044.
L.T. acknowledge the partial support of the NSF grant DMS-1005769 and thanks P.~Zograf for useful discussions.

\section{Basic facts} \label{Basic}
Here we recall the necessary basic facts from the complex-analytic theory of \Te spaces (see the classic book \cite{Ahl66} and \cite{Ahl61,Ahl62}, and the modern exposition in \cite{IT,Nag}) and the results from \cite{TZ87a,TZ87b}.
\subsection{\Te space $T(\Gamma)$ of a Fuchsian group} \label{T space} Let $\Gamma\subset\PSL(2,\R)$ be a Fuchsian group of type $(g,n)$ acting on the Lobachevsky plane $\HH=\{z=x+\sqrt{-1}\,y\in\C \,\vert\, \im z>0\}$. The group $\Gamma$ is generated by $2g$ hyperbolic transformations $A_{1},B_{1},\dots,A_{g},B_{g}$ and $n$ parabolic transformations $S_{1},\dots,S_{n}$, where $3g-3+n>0$, satisfying the single relation
$$A_{1}B_{1}A_{1}^{-1}B_{1}^{-1}\cdots A_{g}B_{g}A_{g}^{-1}B_{g}^{-1}S_{1}\cdots S_{n}=1.$$
The group $\Gamma$ with a given, up to a conjugation in $\PSL(2,\R)$, set of generators $A_{1},B_{1},\dots,A_{g},B_{g},S_{1},\dots,S_{n}$ is called a \emph{marked} Fuchsian group.

Let $\mathcal{A}^{-1,1}(\HH,\Gamma)$ be the space of Beltrami differentials for $\Gamma$ --- a complex Banach space of $\mu\in L^{\infty}(\HH)$ satisfying
$$\mu(\gamma z)\frac{\overline{\gamma'(z)}}{\gamma'(z)}=\mu(z)\quad\forall\gamma\in\Gamma.$$
For every $\mu\in\mathcal{A}^{-1,1}(\HH,\Gamma)$ with
$$\Vert\mu\Vert_{\infty}=\sup_{z\in\HH}|\mu(z)|<1$$
there exists unique quasi-conformal (q.c.)~homeomorphism $f^{\mu}:\HH\rightarrow\HH$ satisfying the Beltrami equation
$$f_{\z}^{\mu}=\mu f^{\mu}_{z},\quad z\in\HH,$$
and fixing the points $0,1,\infty$. Then $\Gamma^{\mu}=f^{\mu}\circ\Gamma\circ(f^{\mu})^{-1}$ is a Fuchsian group of type $(g,n)$ and the \Te space $T(\Gamma)$ is defined by
$$T(\Gamma)=\{\mu\in\mathcal{A}^{-1,1}(\HH,\Gamma)\,|\,\Vert\mu\Vert_{\infty}<1\}/\sim.$$
Here $\mu\sim\nu$ if and only if $f^{\mu}\circ\gamma\circ(f^{\mu})^{-1}=f^{\nu}\circ\gamma\circ(f^{\nu})^{-1}$ for all $\gamma\in\Gamma$ (or equivalently, $f^{\mu}=f^{\nu}$ on $\R$). The group $\Ga$ corresponds to $\mu=0$ and is the origin (the base point) of $T(\Ga)$.

\subsubsection{The complex structure} \label{complex} The \Te space $T(\Gamma)$ admits a natural structure of a complex manifold, which is uniquely determined by the condition that canonical projection which sends $\mu\in\mathcal{A}^{-1,1}(\HH,\Gamma)$ with $\Vert\mu\Vert_{\infty}<1$ to its equivalence class $[\mu]\in T(\Gamma)$ is a holomorphic map. For the Fuchsian group $\Gamma$ of type $(g,n)$ the complex dimension of $T(\Gamma)$ is $d=3g-3+n$.

Explicitly this complex structure is described as follows.
Denote by $\cH^{-1,1}(\HH,\Ga)$
the finite-dimensional subspace of harmonic Beltrami differentials for $\Ga$ with respect to the
hyperbolic metric on $\HH$. It consists of $\mu\in\mathcal{A}^{-1,1}(\HH,\Ga)$ satisfying
$\pa_z(\rho \mu) = 0$, where $\rho(z)=y^{-2}$ and has complex dimension $d=3g-3+n$.
The complex vector space $\cH^{-1,1}(\HH,\Ga)$ is identified with the holomorphic
tangent space $T_{0}T(\Ga)$ to $T(\Ga)$ at the origin $\mu=0$. Every $\mu\in\cH^{-1,1}(\Ga)$
has the form $\mu(z)=y^{2}\overline{q(z)}$, where $q\in\cH^{2,0}(\HH,\Ga)$ is a
cusp form of weight 4 for $\Ga$ --- a holomorphic function on $\HH$ that vanishes at the cusps of $\Gamma$ and satisfies
\begin{equation*}
q(\ga z)\ga'(z)^2=q(z)\quad\forall\ga\in\Ga.
\end{equation*}
Correspondingly, the holomorphic cotangent space $T_{0}^{\ast}T(\Ga)$ to $T(\Ga)$ at the origin is
naturally identified with the complex vector space $\cH^{2,0}(\HH,\Ga)$,
and the pairing between $T^{\ast}_{0}T(\Ga)$ and $T_{0}T(\Ga)$ is given by
\begin{equation*}
(q, \mu)  = \iint\limits_{\Ga \bk \HH}q(z)\mu(z)d^2z,\quad\text{where}\quad d^{2}z=dxdy.
\end{equation*}

Choose a basis $\mu_1,\dots, \mu_d$ for
$\cH^{-1,1}(\HH,\Ga)$, put $\mu = \vep_1 \mu_1 + \cdots + \vep_d \mu_d$ and for $\Vert\mu\Vert_{\infty}<1$ let
$f^\mu$ be the normalized solution of the Beltrami equation. Then the correspondence
$(\vep_1, \dots, \vep_d) \mapsto \Ga^{\mu}= f^\mu \circ \Ga \circ (f^\mu)^{-1}$
defines the complex coordinates in a neighborhood of the origin in $T(\Ga)$, called \emph{Bers
coordinates}.

There is a natural isomorphism  between the \Te spaces $T(\Ga)$ and
$T(\Ga^{\mu})$, which maps $\Ga^{\nu} \in T(\Ga)$ to $(\Ga^{\mu})^{\lambda} \in
T(\Ga^{\mu})$, where, in accordance with $f^\nu=f^\lambda \circ f^\mu$,
\[
\lambda = \left(\frac{\nu - \mu}{ 1- \nu \bar{\mu}}
\frac{f_z^{\mu}}{\bar{f}_{\z}^{\mu}} \right)\circ (f^{\mu})^{-1}.
\]
This isomorphism allows to identify the holomorphic tangent space $T_{[\mu]}T(\Ga)$
at $[\mu]\in T(\Ga)$ with the complex vector space $\cH^{-1,1}(\HH,\Ga^{\mu})$, and
the holomorphic cotangent space $T^{\ast}_{[\mu]}T(\Ga)$ --- with the complex vector space
$\cH^{2,0}(\HH,\Ga^{\mu})$. It also allows to introduce the Bers coordinates in the
neighborhood of $\Ga^\mu$ in $T(\Ga)$, and to prove that these coordinates
transform complex-analytically.
\begin{remark} \label{base}
A \emph{marked} Riemann surface of type $(g,n)$ is a Riemann surface with a set of standard generators of its fundamental group, defined up to an inner automorphism.
Whence the \Te space $T(\Gamma)$ can be interpreted as a \Te space of marked Riemann surfaces of type $(g,n)$ by assigning to each  $[\mu]\in T(\Gamma)$ a marked surface $X^{\mu}\cong\Gamma^{\mu}\bk\HH$, with the surface $X\cong\Gamma\bk\HH$ playing the role of a base point. According to the isomorphism
$T(\Ga)\simeq T(\Ga^{\mu})$, the choice of a base point is inessential and we will often use the notation $T_{g,n}$ for $T(\Gamma)$.
\end{remark}

Variation formulas of the hyperbolic metric $\rho(z)|dz|^{2}$ on $\HH$ play an important role in the complex-analytic theory of \Te spaces. Put
$$(f^{\mu})^{\ast}(\rho)=\frac{|f^{\mu}_z|^{2}}{(\im f^{\mu})^{2}}.$$
The first formula is the classic result of Ahlfors \cite{Ahl61} (the so-called Ahlfors lemma)
that for $\mu\in\cH^{-1,1}(\HH,\Ga)$
\begin{equation}\label{A1-formula}
\left.\frac{\del}{\del\vep}\right|_{\vep=0}(f^{\vep\mu})^{\ast}(\rho)=0.
\end{equation}
The formula for the second variation
\begin{equation}\label{W1-formula}
\left.\frac{\del^{2}}{\del\vep_{1}\del\bar{\vep}_{2}}\right|_{\vep=0}(f^{\vep_{1}\mu+\vep_{2}\nu})^{\ast}(\rho)=\frac{1}{2}\rho f_{\mu\bar\nu},\end{equation}
where $\mu,\nu\in\cH^{-1,1}(\HH,\Ga)$ and $\Ga$-automorphic function $f_{\mu\bar\nu}$ is uniquely determined by
\begin{equation}\label{W2-formula}
-y^{2}\frac{\del^{2}f_{\mu\bar\nu}}{\del z\del\z}+\frac{1}{2}f_{\mu\bar\nu}=\mu\bar{\nu}\quad\text{and}\quad \iint\limits_{\Ga\bk\HH}|f_{\mu\bar\nu}(z)|^{2}\rho(z)d^{2}z<\infty,
\end{equation}
 was proved by S. Wolpert \cite[Theorem 3.3]{W86}.
 \begin{remark}
 It is shown in \cite[Proposition 6.3]{TT06}, that formulas \eqref{W1-formula}--\eqref{W2-formula} can be obtained from Ahlfors' earlier result in \cite{Ahl62}.
 \end{remark}

\subsubsection{\Ka metrics on $T(\Gamma)$} \label{Ka metrics}
The cotangent spaces $T^{\ast}_{[\mu]}T(\Ga)=\cH^{2,0}(\HH, \Gamma^{\mu})$ carry a natural inner product --- the Petersson's inner product on the space of cusp forms of weight $4$. It determines the Weil-Petersson metric on the \Te space $T(\Ga)$ by the formula
$$\langle \mu_{1},\mu_{2}\rangle_{\mathrm{WP}}=\iint\limits_{\Ga^{\mu}\bk\HH}\mu_{1}(z)\overline{\mu_{2}(z)}\rho(z)d^{2}z,\quad\mu_{1},\mu_{2}\in T_{[\mu]}T(\Ga)=\cH^{-1,1}(\HH,\Ga^{\mu}).$$
The Weil-Petersson metric is real-analytic and K\"{a}hler and is invariant with respect to the \Te modular group $\mathrm{Mod}(\Ga)$.

In case when $\Ga$ is a Fuchsian group of type $(g,n)$ and $n>0$,  a new \Ka metric on $T(\Ga)$ was introduced in \cite{TZ88,TZ91}. Namely, let
by $z_{1},\dots,z_{n}\in\R\cup\{\infty\}$ be the set of non-equivalent cusps for $\Ga$ --- the fixed points of the parabolic generators $S_{1},\dots,S_{n}$.
For each $i=1,\dots,n$ denote by $\Gamma_{i}$ the cyclic subgroup $\langle S_{i}\rangle $ and let $\si_{i}\in\PSL(2,\R)$ be such that $\si_{i}\infty=z_{i}$ and $\si_{i}^{-1}S_{i}\si_{i}=\left(\begin{smallmatrix}1 & \pm 1\\0 & \;\;1\end{smallmatrix}\right)$. Let $E_{i}(z,s)$ be the Eisenstein-Maass series associated with the cusp $z_{i}$, which for $\re s>1$  is defined by the following absolutely convergent series
$$E_{i}(z,s)=\sum_{\ga\in\Ga_{i}\bk\Ga}\im(\si_{i}^{-1}\ga z)^{s}.$$
The inner product
$$\langle\mu_{1},\mu_{2}\rangle_{i}=\iint\limits_{\Ga\bk\HH}\mu_{1}(z)\overline{\mu_{2}(z)}E_i(z,2)\rho(z)d^{2}z,\quad i=1,\dots,n,$$
in $\cH^{-1,1}(\HH,\Ga)$, and the corresponding inner products in all $\cH^{-1,1}(\HH,\Ga^{\mu})$ determine another Hermitian metric on $T(\Ga)$. It was proved in \cite{TZ88,TZ91} that this metric is K\"{a}hler for each $i=1,\dots,n$. In \cite{Obitsu99,Weng01, W07} it was called TZ metric and we will denote it by  $\langle~,~\rangle_{\mathrm{TZ},i}$. The metric
$\langle~,~\rangle_{\mathrm{TZ}}=\langle~,~\rangle_{\mathrm{TZ},1}+\cdots+\langle~,~\rangle_{\mathrm{TZ},n}$ is invariant with respect to the \Te modular group $\mathrm{Mod}(\Ga)$. Denote by $\omega_{\mathrm{TZ},i}$ the symplectic form of $i$-th TZ metric,
$$\omega_{\mathrm{TZ},i}=\frac{\sqrt{-1}}{2}\sum_{j,k=1}^{d}\langle\mu_{j},\mu_{k}\rangle_{\mathrm{TZ},i}d\vep_{j}\wedge d\bar\vep_{k},$$
and put $\omega_{\mathrm{TZ}}=$$\omega_{\mathrm{TZ},1}+\cdots+$$\omega_{\mathrm{TZ},n}$.

The TZ metric is intrinsically related to the second variation of the hyperbolic metric on $\HH$ (see Sect.~\ref{complex}). Namely, the following result was proved in \cite{TZ91}
\begin{equation} \label{t-z}
\lim_{y\rightarrow \infty}\im(\si_{i}z)f_{\mu\bar{\nu}}(\si_{i}z)=\frac{4}{3}\langle\mu,\nu\rangle_{\mathrm{TZ},i},\quad i=1,\dots,n,
\end{equation}
where $\mu,\nu\in\cH^{-1,1}(\HH,\Ga)$ and $f_{\mu\bar\nu}$ is defined in \eqref{W2-formula}.

\subsection{The moduli space $\cM_{0,n}$} \label{genus 0} Here
we consider the moduli space\footnote{In \cite{TZ87a, Z89} this moduli space was denoted by $W_{n}$.} $\cM_{0,n}$ of Riemann surfaces of type $(0,n)$ with labeled punctures ($n$-pointed rational curves). Each such surface is uniquely realized as $\bar{\C}=\C\cup\{\infty\}$ with $n$ labeled punctures such that the last three of them are, respectively, $0,1$ and $\infty$.
Let $\cF_{n}(\C)$ be the configuration space of $n$ ordered distinct points in $\C$ with the $\PSL(2,\C)$-action.  The moduli space is defined by $\cM_{0,n}= \cF_{n}(\C)/\PSL(2,\C)$ and is realized as the following domain in $\C^{n-3}$,
$$\cM_{0,n}=\{(w_{1},\dots,w_{n-3})\in\C^{n-3}\,|\, w_{i}\neq 0,1\;\;\text{and}\;\;w_{i}\neq w_{k}\;\;\text{for}\;\;i\neq k\}.$$

Let $X=\C\setminus\{w_{1},\dots,w_{n-3},0,1\}$ be a Riemann surface of type $(0,n)$. By the uniformization theorem, $X\cong\Gamma\bk\HH$, where type $(0,n)$ Fuchsian group $\Gamma$ is normalized such that the fixed points of $S_{n-2}$, $S_{n-1}$, $S_{n}$ are, respectively, $z_{n-2}=0$, $z_{n-1}=1$, $z_{n}=\infty$. Denote by $\HH^{\ast}$ the union of $\HH$ and all cusps for $\Ga$. There is unique covering map $J:\HH\rightarrow X$ with the group of deck transformations $\Gamma$, which extends to a holomorphic isomorphism $J:\Gamma\bk\HH^{\ast}\stackrel{\sim}{\rightarrow}\bar\C$ that fixes $0,1,\infty$ and has the property that $w_{i}=J(z_{i})$, $i=1,\dots,n-3$.
In the classical terminology $J$ is called \emph{Klein's Hauptmodul}.
It is a unique  $\Ga$-automorphic function on $\HH$ that fixes $0$ and $1$ and has a simple pole at $\infty$. The function $J$ is univalent in any fundamental domain for $\Gamma$ and has the following Fourier expansions at the cusps,
\begin{align}
J(\si_{i}z)&=w_{i}+\sum_{k=1}^{\infty}a_{i}(k)q^{k},\quad i=1,\dots,n-1,  \label{J-1} \\
J(\si_{n}z)& =\sum_{k=-1}^{\infty}a_{n}(k)q^{k},\quad  i=n, \label{J-n}
\end{align}
where $q=e^{2\pi\sqrt{-1}z}$. The first Fourier coefficients of $J$ determine the following smooth positive functions  on $\cM_{0,n}$: $h_{i}=|a_{i}(1)|^{2}$, $i=1,\dots,n-1$, and $h_{n}=|a_{n}(-1)|^{2}$.

The symmetric group $\mathrm{Symm}(n)$ acts on $\cM_{0,n}$ (see \cite[\S1]{Z89}) and let $\frak{M}_{0,n}=\cM_{0,n}/\mathrm{Symm}(n)$ be the moduli of Riemann surfaces of type $(0,n)$.
As in \cite{Z89}, let $\{f_{\sigma}\}_{\sigma\in \mathrm{Symm}(n)}$ be a $1$-cocycle for $\mathrm{Symm}(n)$ on $\cM_{0,n}$ defined by
$$f_{\sigma_{kn}}(w_1,\dots,w_{n-3})=\begin{cases}\displaystyle{\prod_{\substack{i=1\\ i\neq k}}^{n-3}}\dfrac{(w_i-w_k)^2}{w_k(w_k-1)},&  k=1,\dots, n-3,\\
\displaystyle{\prod_{i=1}^{n-3}w_{i}^2}, & k=n-2,\\
\displaystyle{\prod_{i=1}^{n-3}(w_i-1)^2}, & k=n-1
\end{cases}$$
where $\sigma_{kn}$ is the transposition interchanging the points with indices $k$ and $n$,
and extended to the full group by $f_{\sigma_{1}\sigma_{2}}=(f_{\sigma_{1}}\circ \sigma_{2})f_{\sigma_{2}}$.
Let $\lambda_{0,n}$ be the holomorphic line bundle over $\frak{M}_{0,n}$ determined by the 1-cocycle $f$, the quotient
of the trivial line bundle $\cM_{0,n}\times\C\rightarrow\cM_{0,n}$ by the symmetric group action
$$(\bm{w},z)\mapsto (\sigma\cdot\bm{w},f_\sigma(\bm{w})z),\quad\bm{w}\in\cM_{0,n}, \,z\in\C, \,\sigma\in \mathrm{Symm}(n).$$
\begin{lemma} \label{Hermite metric} A positive function $H=h_{1}\cdots h_{n-1}/h_{n}$ on $\cM_{0,n}$ determine a Hermitian metric  in the line bundle $\lambda_{0,n}$ over $\frak{M}_{0,n}$.
\end{lemma}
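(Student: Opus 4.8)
The plan is to verify that the positive function $H$ satisfies the single equivariance property that makes it descend to a Hermitian metric on $\lambda_{0,n}$. Since $\lambda_{0,n}$ is the quotient of the trivial bundle $\cM_{0,n}\times\C$ by the action $(\bm w,z)\mapsto(\sigma\cdot\bm w,\,f_\sigma(\bm w)z)$, prescribing $\|z\|^2_{\bm w}=H(\bm w)|z|^2$ in the tautological trivialization descends to a well-defined metric on the quotient precisely when
\[
H(\sigma\cdot\bm w)=|f_\sigma(\bm w)|^{-2}\,H(\bm w),\qquad \sigma\in\mathrm{Symm}(n).
\]
Because $f$ is a $1$-cocycle, $f_{\sigma_1\sigma_2}=(f_{\sigma_1}\circ\sigma_2)f_{\sigma_2}$, this relation is multiplicative along compositions, so it is enough to establish it on a generating set. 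I would use the transpositions $\sigma_{kn}$, $k=1,\dots,n-1$, which generate $\mathrm{Symm}(n)$ and for which $f_{\sigma_{kn}}$ is given explicitly above.

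The heart of the argument is to compute how the functions $h_i$ transform under a single $\sigma_{kn}$. The action on the normalized configuration $(w_1,\dots,w_{n-3},0,1,\infty)$ permutes the marked points and then renormalizes by the unique $M=M_{\sigma_{kn}}\in\PSL(2,\C)$ carrying the new last three points back to $0,1,\infty$; for $k\le n-3$ one finds $M(w)=(1-w_k)w/(w-w_k)$, while for $k=n-2,n-1$ one gets $M(w)=1/w$ and $M(w)=w/(w-1)$ respectively. Correspondingly the Klein Hauptmodul is replaced by $M\circ J$ with the cusps relabeled according to $\sigma_{kn}$. I would then read off the new first Fourier coefficients from the expansions \eqref{J-1}--\eqref{J-n}: at a cusp $z_i$ whose finite value $w_i$ is fixed or moved to another finite point the chain rule gives $\tilde a_i(1)=M'(w_i)a_i(1)$, hence $\tilde h_i=|M'(w_i)|^2 h_i$; at the two cusps where $M$ interchanges a finite value with $\infty$ one expands $M\circ J$ directly in $q$, obtaining (for $k\le n-3$) $\tilde a_k(1)=(1-w_k)w_k/a_n(-1)$ and $\tilde a_n(-1)=(1-w_k)w_k/a_k(1)$, so that $\tilde h_k=|w_k(w_k-1)|^2/h_n$ and $\tilde h_n=|w_k(w_k-1)|^2/h_k$.

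Assembling these into $\tilde H=\tilde h_1\cdots\tilde h_{n-1}/\tilde h_n$, the plan is to observe the systematic cancellations: the contributions at $z_{n-2},z_{n-1}$ combine to $\tilde h_{n-2}\tilde h_{n-1}=h_{n-2}h_{n-1}$, the factors $h_i$ reassemble into $\prod_{i=1}^{n-1}h_i/h_n=H$, and the remaining powers of $|w_k(w_k-1)|^2$ and $|w_i-w_k|^4$ collect to exactly $|f_{\sigma_{kn}}(\bm w)|^{-2}$, as required; the cases $k=n-2,n-1$ are checked the same way against $\prod_i w_i^2$ and $\prod_i(w_i-1)^2$. The step I expect to be the main obstacle is the careful bookkeeping of the cusp relabeling together with the transformation of the \emph{pole} coefficient $a_n(-1)$ under $M\circ J$ when the cusp at $\infty$ is exchanged with a finite cusp; once the four distinguished cusps $z_k$, $z_n$, $z_{n-2}$, $z_{n-1}$ are handled correctly, the remaining terms are routine and the match with $f_{\sigma_{kn}}$ is forced, the cocycle identity then guaranteeing the extension to all of $\mathrm{Symm}(n)$.
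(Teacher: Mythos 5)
Your proposal is correct and follows the same route as the paper: the paper's proof is a one-line assertion that $H(\sigma\cdot\bm{w})\,|f_\sigma(\bm{w})|^2=H(\bm{w})$, citing Zograf's description of the $\mathrm{Symm}(n)$-action, and you simply carry out that verification explicitly on the generating transpositions $\sigma_{kn}$ (your transformation rules for the $h_i$ under the renormalizing M\"obius map, and the resulting cancellations against $f_{\sigma_{kn}}$, all check out). The only difference is that you supply the computation the paper delegates to the reference.
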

\begin{proof} It readily follows from the description of the symmetric group action on $\cM_{0,n}$ in \cite[\S1]{Z89} that $H(\sigma\cdot\bm{w}) |f_\sigma(\bm{w})|^2=H(\bm{w})$ for all $\bm{w}\in\cM_{0,n}$ and $\sigma\in \mathrm{Symm}(n)$.
\end{proof}

The hyperbolic metric $e^{\varphi(w)}|dw|^2$ on $X$ is a push-forward by the map $J$ of the hyperbolic metric $\rho(z)|dz|^2$ on $\HH$,
\begin{equation} \label{phi}
e^{\varphi(w)}=\frac{\left|(J^{-1})'(w)\right|^2}{\left(\text{Im}\,J^{-1}(w)\right)^2},
\end{equation}
and satisfies the Liouville equation
\begin{equation} \label{Liov}
\varphi_{w\bar{w}}=\frac{1}{2}e^{\varphi},\quad w\in X.
\end{equation}
From the Fourier expansions \eqref{J-1}--\eqref{J-n} one gets the following asymptotic behavior of $\varphi(w)$
as $w\rightarrow w_i$ (see \cite[Lemma 2]{TZ87a})
\begin{align}
\varphi(w) & =-2\log |w-w_i|-2\log \left|\log \left|\frac{w-w_i}{a_i(1)}\right|\right|+O(|w-w_i|),\ \ i\neq n,  \label{as-1}\\
\varphi(w)&=-2\log\left|w\right|-2\log\log \left|\frac{w}{a_n(-1)}\right|+O(|w|^{-1}),\ \  i=n. \label{as-2}
\end{align}
Denote by $\mathcal{S}(f)$ the Schwarzian derivative,
$$\mathcal{S}(f)=\frac{f'''}{f'}-\frac{3}{2}\left(\frac{f''}{f'}\right)^{2}.$$
We have
\begin{equation} \label{Schwarz}
\mathcal{S}(J^{-1})(w)=\varphi_{ww}(w)-\frac{1}{2}\varphi_{w}(w)^{2}=\sum_{i=1}^{n-1}\left(\frac{1}{2(w-w_{i})^{2}}+\frac{c_{i}}{w-w_{i}}\right),
\end{equation}
and
$$\quad\mathcal{S}(J^{-1})(w)=\frac{1}{2w^{2}}+O(|w|^{-3})\quad\text{as}\quad w\rightarrow\infty$$
where $c_{i}=-a_{i}(2)/(a_{i}(1))^{2}$, $i=1,\dots,n-1$, (see \cite[Lemma 1]{TZ87a}) are \emph{accessory parameters} of the Fuchsian uniformization of the surface $X$.

Consider the Riemann surface $X=\C\setminus\{w_{1},\dots,w_{n-3},0,1\}\cong\Gamma\bk\HH$ as a base point in $T_{0,n}$.
For each $[\mu]\in T_{0,n}$ the Fuchsian group $\Gamma^{\mu}=f^{\mu}\circ\Gamma\circ (f^{\mu})^{-1}$ is normalized and we realize the Riemann surface $X^{\mu}\cong\Gamma^{\mu}\bk\HH$ as $X^{\mu}=\bar{\C}\setminus\{w_{1}^{\mu},\dots,w_{n-3}^{\mu}, 0,1,\infty\}$. Denote by $J_{\mu}$ the corresponding normalized covering map $J_{\mu}:\HH\rightarrow X^{\mu}$ and consider the map $p: T_{0,n}\rightarrow\cM_{0,n}$, defined by
$$T_{0,n}\ni[\mu] \mapsto p([\mu])=(w_{1}^{\mu},\dots,w_{n-3}^{\mu})\in\cM_{0,n},\quad\text{where}\quad w_{i}^{\mu}=(J_{\mu}\circ f^{\mu})(z_{i}).$$

According to \cite[Lemma 3]{TZ87a}, the map $p$
is a complex-analytic covering.
Consider the commutative diagram
\begin{align}\label{CD-0}
\begin{CD}
\mathbb{H} @> f^{\mu}>> \HH   \\
@VV J V   @VV J_{\mu}  V\\
X @>   F^{\mu} >> X^{\mu}
\end{CD}
\end{align}
It follows from \eqref{CD-0} that
\begin{equation} \label{F}
F^{\mu}_{\bar{w}}=MF^{\mu}_{w},\quad\text{where}\quad M=(\mu\circ J^{-1})\frac{\overline{(J^{-1})'}}{(J^{-1})'}.
\end{equation}
The function $f^{\vep\mu}(z)$ is real-analytic in $\vep$ for all $z\in\C$. Put $\dot{f}^{\mu}(z)=\del f^{\vep\mu}/\del\vep|_{\vep=0}$.
It satisfies $\dot{f}^{\mu}_{\bar{z}}=\mu$
and is given by
$$\dot{f}^{\mu}(z)=-\frac{1}{\pi}\iint_{\mathbb{H}}\mu(\zeta)R(\zeta,z)d^{2}\zeta,\quad\text{where}\quad R(\zeta,z)=\frac{z(z-1)}{(\zeta-z)\zeta(\zeta-1)}.$$
Correspondingly, the function $F^{\vep\mu}$ is holomorphic in $\vep$ and
\begin{equation} \label{dot F}
\dot{F}^{\mu}(w)=-\frac{1}{\pi}\iint_{\C}M(v)R(v,w)d^{2}v,
\end{equation}
where $\dot{F}^{\mu}=\left.\left(\del F^{\vep\mu}/\del\vep\right)\right|_{\vep=0}$.

Denote by $r_{i}$, $i=1,\dots,n-3$, the basis in $\cH^{2,0}(\HH, \Gamma)=T^{\ast}_{0}T_{0,n}$ defined as
$$r_{i}(z)=R_{i}(J(z))J'(z)^{2},\quad\text{where}\quad R_{i}(w)=-\frac{1}{\pi}R(w,w_{i}),$$
and let $q_{i}(z)$, $i=1,\dots,n-3$, be the basis in $\cH^{2,0}(\HH, \Gamma)$, biorthogonal to $r_{i}(z)$ with respect to the Petersson inner product. Finally, let $\mu_{i}(z)=y^{2}\overline{q_{i}(z)}$ be the corresponding basis in $\cH^{-1,1}(\HH, \Gamma)=T_{0}T_{0,n}$ (see \cite[Sects.~2.4--2.6]{TZ87a}).
The bases $r^{\mu}_{i}(z)$ in
$\cH^{2,0}(\HH, \Gamma^{\mu})$ and $\mu_{i}^{\mu}(z)$ in $\cH^{-1,1}(\HH, \Gamma^{\mu})$ for $[\mu]\in T_{0,n}$ are defined similarly. Then for the covering map $p: T_{0,n}\rightarrow\cM_{0,n}$ we have (see \cite[Lemma 3]{TZ87a})
\begin{equation} \label{motion}
dp_{[\mu]}\left(\mu_{i}^{\mu}\right)=\frac{\del}{\del w_{i}} \quad\text{and}\quad p^{\ast}_{[\mu]}(dw_{i})=r^{\mu}_{i},\quad i=1,\dots,n-3.
\end{equation}
We also have
\begin{equation} \label{Schwarz-2}
\mathcal{S}(J^{-1})(w)=\sum_{i=1}^{n}\mathcal{E}_{i}(w)-\pi\sum_{i=1}^{n-3}c_{i}R_{i}(w),
\end{equation}
where
\begin{equation} \label{E-series-0}
\mathcal{E}_{i}(w)=\frac{1}{2(w-w_{i})^{2}}-\frac{1}{2w(w-1)},\;\; i\neq n,\quad\mathcal{E}_{n}(w)=\frac{1}{2w(w-1)}.
\end{equation}
The corresponding functions $e_{i}(z)=\mathcal{E}_{i}(J(z))J'(z)^{2}$ on $\HH$ are automorphic forms of weight $4$ for $\Ga$ with non-zero constant term at the cusp
$z_{i}$, $i=1,\dots,n$.

Put $\dot{F}^{i}=\dot{F}^{\mu_{i}}$, $i=1,\dots,n-3$. The functions $\dot{F}^{i}(w)$ are given by \eqref{dot F} where
\begin{equation} \label{M}
M_{i}(v)=e^{-\varphi(v)}\overline{Q_{i}(v)},\quad Q_{i}(v)=q_{i}(J^{-1}(v))(J^{-1})^{\prime}(v)^{2}.
\end{equation}
It follows from \eqref{motion} that
$$\dot{F}^{i}(w_{j})=\delta_{ij},\quad i,j=1,\dots,n-3$$
and
$$\dot{F}^{i}(0)=\dot{F}^{i}(1)=0,\quad \dot{F}^{i}(w)=o(|w|^{2})\quad\text{as}\quad w\rightarrow\infty.$$
Also we have
\begin{align}
\dot{F}^{i}(w) & =\delta_{ij}+(w-w_{j})\dot{F}_{w}^{i}(w)+o\left(\left|\frac{w-w_{j}}{\log|w-w_{j}|}\right|\right) \label{F-limits-1}\\
\intertext{as $w\rightarrow w_{j}$, $ j\neq n$, and}
\dot{F}^{i}(w) & =w \dot{F}^{i}_{w}(w) + o\left(\frac{|w|}{\log |w|}\right) \quad \text{as} \ \ w\rightarrow\infty. \label{F-limits-2}
\end{align}

\begin{remark} One can easily prove \eqref{F-limits-1}--\eqref{F-limits-2} (with better error terms) using integral representation \eqref{dot F} and asymptotic behavior \eqref{as-1}--\eqref{as-2}.
Here is the sketch of the proof of \eqref{F-limits-2}. We have
$$\dot{F}^{i}(w)-w\dot{F}^{i}_{w}(w)=-\frac{1}{\pi}\iint_{\C}M_{i}(v)\left\{\frac{v-2w}{(v-w)^{2}}-\frac{1}{v}\right\}d^{2}v,$$
where the integral is understood in the principal value sense as in \cite{Ahl66}. Putting $v=uw$ we have
 $$\dot{F}^{i}(w)-w\dot{F}^{i}_{w}(w)=-\frac{\bar{w}}{\pi}\iint_{\C}M_{i}(uw)\left\{\frac{u-2}{(u-1)^{2}}-\frac{1}{u}\right\}d^{2}u.$$
 It follows from \eqref{M} that
 $$M_{i}(w)=O\left(\frac{\log^{2}|w|}{|w|}\right)\quad\text{as}\quad w\rightarrow\infty,$$
whence the integral over $|u|\geq 2$ is estimated by $O(\log^{2}|w|)$. Now choose $\alpha^{3}=\log^{2}|w|/|w|$. Estimating the integral over $|u|\leq \alpha$ by the area, and the integral over $\alpha\leq |u|\leq 2$ --- by the estimate of $M_{i}(\alpha|w|)$, we obtain that both of these integrals are estimated by $|w|\alpha^{2}=|w|^{1/3}\log^{4/3}|w|$.
\end{remark}

Let $e^{\varphi^{\mu}(w)}|dw|^{2}$ be  the hyperbolic metric on the Riemann  surface $X^{\mu}$.
It follows from commutative diagram \eqref{CD-0} that  $F^{\mu}\circ J=J_{\mu}\circ f^{\mu}$, and we have
$$(F^{\mu})^{\ast}(e^{\varphi^{\mu}})=(J^{-1})^{\ast}(f^{\mu})^{\ast}(\rho).$$
Whence the first and second variations of the family of hyperbolic metrics
$e^{\varphi^{\vep\mu}(w)}|dw|^{2}$ on the Riemann surfaces $X^{\vep\mu}=F^{\vep\mu}(X)$ are given by the same formulas \eqref{A1-formula}--\eqref{W2-formula}, where $\rho$ is replaced by $e^{\varphi}$ and $f_{\mu\bar\nu}$ --- by $(J^{-1})^{\ast}(f_{\mu\bar\nu})=f_{\mu\bar\nu}\circ J^{-1}$. Moreover, since for any $\alpha\in\R$ we have
$$(F^{\mu})^{\ast}(e^{\alpha\varphi^{\mu}})=\left((F^{\mu})^{\ast}(e^{\varphi^{\mu}})\right)^{\alpha},$$
we get from \eqref{A1-formula} and \eqref{W1-formula}
\begin{equation} \label{A2-formula}
\left.\frac{\del}{\del\vep}\right|_{\vep=0}(F^{\vep\mu})^{\ast}(e^{\alpha\varphi^{\vep\mu}})=0
\end{equation}
and
\begin{equation} \label{W3-formula}
\left.\frac{\del^{2}}{\del\vep_{1}\del\bar{\vep}_{2}}\right|_{\vep=0}(F^{\vep_{1}\mu+\vep_{2}\nu})^{\ast}(e^{\alpha\varphi^{\vep\mu}})=\frac{\alpha}{2}e^{\alpha\varphi} f_{\mu\bar\nu}\circ J^{-1}.
\end{equation}

Finally, each TZ metric $\langle~,~\rangle_{\mathrm{TZ},i}$ on $T_{0,n}$ is invariant with respect to the automorphism group of the covering
$p:T_{0,n}\rightarrow\cM_{0,n}$ and determines a \Ka metric on $\cM_{0,n}$, which we continue to denote by $\langle~,~\rangle_{\mathrm{TZ},i}$, $i=1,\dots,n$.

\subsection{The Schottky space $\mathfrak{S}_{g,n}$} \label{S space}
A Schottky group $\Sigma$ is a free finitely generated strictly loxodromic Kleinian group. Its limit set $\Lambda$ is a Cantor set and the region of discontinuity $\Omega=\bar\C\setminus\Lambda$ is connected. Let $\Sigma$ be a Schottky group of rank $g>1$, considered as a discrete subgroup of $\mathrm{PSL}(2,\C)$. The group $\Sigma$ acts on $\Omega$ freely, and the quotient space $\Sigma\bk\Omega$ is compact Riemann surface of genus $g$. A Schottky group $\Sigma$ of rank $g$ with a relation-free system of generators $L_{1},\dots,L_{g}$ is called \emph{marked}. For each such system of free generators there is a fundamental domain $D$ for $\Sigma$ in $\Omega$ which is a region in $\bar\C$ bounded by $2g$ disjoint Jordan curves $C_{1},\dots,C_{g},C'_{1},\dots,C'_{g}$ with $C'_{i}=-L_{i}(C_{i})$, $i=1,\dots,g$. Here $C_{i}$ and $C_{i}'$ are oriented as components of the boundary of $D$, and the minus sign means the
reverse orientation. Each element $L_{i}$ can be represented in the normal form
$$\frac{L_{i}w-a_{i}}{L_{i}w-b_{i}}=\lambda_{i}\frac{w-a_{i}}{w-b_{i}},\quad w\in\bar\C,$$
where $a_{i}$ and $b_{i}$ are the respective attracting and repelling fixed points of the transformation
$L_{i}$ and $0 <|\lambda_{i}| < 1$. In what follows we always assume that a marked Schottky is \emph{normalized}, that is $a_{1}=0$, $b_{1}=\infty$ and $a_{2} = 1$. In particular, this implies that  $\infty\notin D$.
The mapping
$$(\Sigma; L_{1},\dots,L_{g})\mapsto (a_{3}, \dots, a_{g}, b_{2},\dots,b_{g},\lambda_{1},\dots,\lambda_{g})\in\C^{3g-3}$$
establishes an one-to-one correspondence between the set of normalized marked Schottky
groups and a region $\mathfrak{S}_{g}$ in $\C^{3g-3}$, called the \emph{Schottky space}.

Equivalently, the Schottky space is defined as follows. Let $\mathcal{A}^{-1,1}(\Omega, \Sigma)$ be the complex Banach space of $L^{\infty}(\Omega)$ of Beltrami differentials for $\Sigma$  (cf.~Sect.~\ref{T space}).  Let $\mathfrak{D}(\Sigma)$ be a deformation space of the Schottky group $\Sigma$,
$$\mathfrak{D}(\Sigma)=\{\mu\in\mathcal{A}^{-1,1}(\Omega, \Sigma)\,|\,\Vert\mu\Vert_{\infty}<1\}/\sim,$$
where $\mu\sim\nu$ if and only if $F^{\mu}\circ\sigma\circ(F^{\mu})^{-1}=F^{\nu}\circ\sigma\circ(F^{\nu})^{-1}$ for all $\sigma\in\Sigma$ (or equivalently, $F^{\mu}=F^{\nu}$ on $\Lambda$)\footnote{Here and in what follows $F^{\mu}$ is a normalized solution of the Beltrami equation on $\C$ with Beltrami coefficient $\mu$.}.
The group $\Sigma$ corresponds to $\mu=0$ and is the origin (the base point) of $\mathfrak{D}(\Sigma)$. The deformation space $\mathfrak{D}(\Sigma)$ is complex-analytically isomorphic to the Schottky space $\mathfrak{S}_{g}$ with the choice of a base point.

The Schottky space $\mathfrak{S}_{g,n}$ of type $(g,n)$ Riemann surfaces is defined by a holomorphic fibration $\jmath: \mathfrak{S}_{g,n}\rightarrow\mathfrak{S}_{g}$ whose fibers over the points $[\mu]\in\mathfrak{S}_{g}$ are configuration spaces $\cF_{n}(\Sigma^{\mu}\bk\Omega^{\mu})$, where $\Sigma^{\mu}=F^{\mu}\circ\Sigma\circ(F^{\mu})^{-1}$ and $\Omega^{\mu}=F^{\mu}(\Omega)$. Equivalently it is defined as follows. Consider the deformation space of a Schottky group $\Sigma$ together with a point $(w_{1},\dots,w_{n})\in\cF_{n}(D)$,
\begin{align*}
\mathfrak{D}&(\Sigma; w_{1},\dots,w_{n})\\
&=\{(\mu;w^{\mu}_{1},\dots,w^{\mu}_{n})\in\mathcal{A}^{-1,1}(\Omega, \Sigma)\times \cF_{n}(D^{\mu})\,|\,\Vert\mu\Vert_{\infty}<1\}/\sim.
\end{align*}
Here $w^{\mu}_{i}=F^{\mu}(w_{i})$, $D^{\mu}=F^{\mu}(D)$ and $\mu\sim\nu$ if and only if $F^{\mu}\circ\sigma\circ(F^{\mu})^{-1}=F^{\nu}\circ\sigma\circ(F^{\nu})^{-1}$ for all $\sigma\in\Sigma$ and $w^{\mu}_{i}=w^{\nu}_{i}$, $i=1,\dots,n$.
The deformation space $\mathfrak{D}(\Sigma;w_{1},\dots,w_{n})$ is complex-analytically isomorphic to the Schottky space $\mathfrak{S}_{g,n}$ with the choice of a base point.

Let $X=\Sigma\bk\Omega$ be compact Riemann surface of genus $g$ with $n$ marked points $x_{1},\dots,x_{n}$, and let $\Gamma$ be a Fuchsian group of type $(g,n)$ such that $X_{0}=X\setminus\{x_{1},\dots,x_{n}\}\cong\Gamma\bk\HH$. One can choose generators $A_{1},B_{1},\dots,A_{g},B_{g}$ and $S_{1},\dots,S_{n}$ of $\Gamma$ such that $\Sigma$ is isomorphic to the
quotient group $\Gamma/N$, where $N$ is the smallest normal subgroup of $\Gamma$ which contains $A_{1},\dots,A_{g}$ and $S_{1},\dots,S_{n}$. As in Sect.~\ref{genus 0}, let $\HH^{\ast}$ be the union of $\HH$ and all cusps for $\Gamma$. The complex-analytic covering
$\pi_{\Gamma}:\HH\rightarrow\Gamma\bk\HH\cong X_{0}$ extends to the map $\pi_{\Gamma}^{\ast}:\HH^{\ast}\rightarrow X$ such that $\pi_{\Gamma}^{\ast}(z_{i})=x_{i}$, where $z_{i}$ are fixed points of $S_{i}$, $i=1,\dots,n$.

The Schottky uniformization of a compact Riemann surface $X$ with $n$ marked points $x_{1},\dots,x_{n}$ is related to the Fuchsian uniformization of a punctured
surface $X_{0}=X\setminus\{x_{1},\dots,x_{n}\}$ by the commutative diagram
\begin{equation*}
\begin{tikzcd}[thick]
\HH^{\ast} \arrow{r}{J} \arrow[swap]{dr}{\pi_{\Gamma}^{\ast}} & \Omega \arrow{d}{\pi_{\Sigma}} \\
& X
\end{tikzcd}
\end{equation*}
where $\pi_{\Sigma}$ is unramified while $J$ and $\pi_{\Ga}^{\ast}$ are branched covering maps. The map $J$ is considered as a meromorphic function on $\HH$ which is automorphic with respect to $N$ and satisfies
$J\circ B_{i}=L_{i}\circ J$, $j=1,\dots,g$. It has the following Fourier series expansions at the cusps of $\Gamma$,
\begin{equation} \label{F-J}
J(\sigma_i z)=w_i+\sum_{k=1}^{\infty} a_{i}(k) q^{k},\quad i=1,\dots,n,
\end{equation}
where $w_{i}=J(z_{i})$. (Cf.~\eqref{J-1} and note that since $\Sigma$ is normalized, $\infty\notin\Omega$.)

Let $\mathscr{L}_{i}$ be $i$-th the \emph{relative dualizing sheaf} on $\mathfrak{S}_{g,n}$ --- a holomorphic line bundle dual to the vertical tangent bundle on $\mathfrak{S}_{g,n}$ along the fibers of the projection $p_{i}:\mathfrak{S}_{g,n}\rightarrow\mathfrak{S}_{g,n-1}$ which `forget' the marked point $w_{i}$, $i=1,\dots,n$.  The bundles $\cL_{i}$, also called \emph{tautological line bundles}, are characterized by the property that the fiber of $\mathscr{L}_{i}$ over a point $([\Sigma];w_{1},\dots,w_{n})\in\mathfrak{S}_{g,n}$ is the cotangent line $T^{\ast}_{w_{i}}(\Sigma\backslash\Omega)$.

Since $J\circ B_{k}=L_{k}\circ J$, the points $w_{1},\dots,L_{k}w_{i},\dots,w_{n}$ correspond to the cusps $z_{1},\dots,B_{k}z_{i},\dots,z_{n}$, and the first Fourier coefficient of $J(z)$ at the equivalent cusp $B_{k}z_{i}$ is $L'_{k}(w_{i})a_{i}(1)$.  Correspondingly,  $h_{i}=|a_{i}(1)|^{2}$ gets replaced by $h_{i}|L'_{k}(w_{i})|^{2}$, and using the above interpretation of the line bundles $\cL_{i}$ we arrive at the following statement.
\begin{lemma}\label{h tautological} The quantities $h_{i}$ determine Hermitian metrics in the holomorphic line bundles $\cL_{i}$, $i=1,\dots,n$.\end{lemma}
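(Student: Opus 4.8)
The plan is to exhibit a natural holomorphic frame of $\cL_i$ and to show that $h_i$ is the squared length of that frame, the only substantive point being that this prescription is insensitive to the choice of representative of the marked point in its $\Sigma$-orbit. Recall that the fiber of $\cL_i$ over $([\Sigma];w_1,\dots,w_n)$ is the cotangent line $T^{\ast}_{w_i}(\Sigma\bk\Omega)$. Since $\pi_\Sigma:\Omega\to\Sigma\bk\Omega$ is unramified, the ambient coordinate $w$ on $\Omega\subset\bar\C$ restricts to a local holomorphic coordinate at $w_i$, so that $dw$ is a nowhere-vanishing holomorphic section of $\cL_i$; as $([\Sigma];w_1,\dots,w_n)$ varies over $\mathfrak{S}_{g,n}$ (with $w_i\in D$, hence $w_i\neq\infty$) this gives a holomorphic trivialization of $\cL_i$. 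A Hermitian metric is then the same datum as a positive function $\Vert dw\Vert^2$, and I would set $\Vert dw\Vert^{2}=h_i$.

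First I would record why $h_i$ is a positive smooth function in this frame. Writing $w=J(\sigma_i z)$ and $q=e^{2\pi\sqrt{-1}z}$ in the Fourier expansion \eqref{F-J}, the coefficient $a_i(1)=\partial(J\circ\sigma_i)/\partial q|_{q=0}$ is the derivative of the coordinate $w$ with respect to the cusp coordinate $q$; since the puncture at $z_i$ fills in to the smooth marked point $w_i$, the covering $J$ extends across the cusp as a local biholomorphism in the coordinate $q$, so $a_i(1)\neq0$ and $h_i=|a_i(1)|^2>0$. As $J$, and hence every $a_i(1)$, depends holomorphically on the Schottky and configuration parameters, $h_i$ is real-analytic on $\mathfrak{S}_{g,n}$.

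The heart of the matter is the compatibility of $\Vert dw\Vert^2=h_i$ under change of representative, which is exactly the transition law already computed before the lemma. A point of $\mathfrak{S}_{g,n}$ fixes the marked point only as a $\Sigma$-orbit, and replacing $w_i$ by $L_k w_i$ replaces the trivializing coordinate by $\tilde w=L_k(w)$, so by the chain rule the frame transforms as $d\tilde w=L_k'(w_i)\,dw$ at the marked point. On the Fuchsian side this corresponds, via $J\circ B_k=L_k\circ J$, to passing to the equivalent cusp $B_k z_i$, at which the leading Fourier coefficient is $L_k'(w_i)\,a_i(1)$; hence $h_i\mapsto|L_k'(w_i)|^2 h_i$. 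The two transformations match, $\Vert d\tilde w\Vert^2=|L_k'(w_i)|^2\Vert dw\Vert^2$, so the assignment $\Vert dw\Vert^2=h_i$ is independent of the representative and patches to a globally defined Hermitian metric in $\cL_i$.

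The step demanding the most care is matching the two transition laws with the correct powers of $L_k'(w_i)$: the cotangent frame $dw$ scales by the first power of $L_k'(w_i)$, while the candidate norm $h_i=|a_i(1)|^2$ scales by $|L_k'(w_i)|^2$, and it is precisely the agreement of the square of the frame transition with the transition of $h_i$ that certifies that $h_i$ is a metric rather than a mere function. Once the transformation $a_i(1)\mapsto L_k'(w_i)\,a_i(1)$ is granted (as established from $J\circ B_k=L_k\circ J$), positivity and smoothness are immediate and the lemma follows for each $i=1,\dots,n$.
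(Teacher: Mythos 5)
Your proof is correct and follows essentially the same route as the paper, which likewise deduces the lemma from the transformation law $a_i(1)\mapsto L_k'(w_i)\,a_i(1)$ at the equivalent cusp $B_k z_i$ (via $J\circ B_k=L_k\circ J$) together with the identification of the fiber of $\cL_i$ with $T^{\ast}_{w_i}(\Sigma\bk\Omega)$. The only additions are your explicit check that the frame $dw$ scales by the first power of $L_k'(w_i)$ while $h_i$ scales by $|L_k'(w_i)|^2$, and the verification that $a_i(1)\neq 0$, both of which the paper leaves implicit.
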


Let $e^{\varphi(w)}|dw|^{2}$ be the push-forward of the hyperbolic metric on $\HH$ by the map $J$. It is given by the same formula \eqref{phi}, where $\varphi(w)$  is smooth on  $\Omega_{0}=\Omega\setminus\Sigma\cdot\{w_{1},\dots,w_{n}\}$, a complement in $\Omega$ of the $\Sigma$-orbit of $\{w_{1},\dots,w_{n}\}$. The function $\varphi(w)$ satisfies
\begin{equation} \label{phi-transform}
\varphi(\si w)=\varphi(w)-\log|\si'(w)|^{2},\quad\forall\si\in\Sigma,\;w\in\Omega_{0},
\end{equation}
and has the same asymptotics \eqref{as-1} as $w\rightarrow w_{i}$, $i=1,\dots,n$.
\begin{remark} \label{h-formula} From asymptotics \eqref{as-1} it follows that
$$\log h_{i}  =\lim_{w\rightarrow w_{i}}\left(\log|w-w_{i}|^{2}+\frac{2e^{-\varphi(w)/2}}{|w-w_{i}|}\right),\quad i=1,\dots,n.$$
\end{remark}
To each marked Fuchsian group $\Ga$ of type $(g,n)$ there is a unique marked normalized Schottky group $\Sigma\simeq\Gamma/N$ with the domain of discontinuity $\Omega$ such that $\Ga\bk\HH^{\ast}\cong\Sigma\bk\Omega$. This determines a map
$$\pi: T_{g,n}\rightarrow\mathfrak{S}_{g,n}$$
by putting $w_{i}=J(z_{i})$, $i=1,\dots,n$. As in $n=0$ case (see \cite[Sect. 2.4]{TZ87b}), the map $\pi$  is a complex-analytic covering. It plays the same role as the corresponding covering map $p$ in Sect.~\ref{genus 0}.

Specifically, the push-forward by the map $J$ of the vector space $\cH^{2,0}(\HH,\Gamma)$ is a vector space $\cH^{2,0}(\Omega_{0},\Sigma)$ of holomorphic functions on $\Omega_{0}$, defined as
$$Q(w)=q(J^{-1}(w))(J^{-1})'(w)^{2},\quad q(z)\in\cH^{2,0}(\HH,\Gamma).$$
They are automorphic forms of weight $4$ for group $\Sigma$ which admit a meromorphic extension to $\Omega$ with at most simple poles at $\Sigma\cdot\{w_{1},\dots,w_{n}\}$.
The space  $\cH^{2,0}(\Omega_{0},\Sigma)$ is naturally identified with the holomorphic cotangent space $T^{\ast}_{0}\mathfrak{S}_{g,n}$ to $\mathfrak{S}_{g,n}$ at the origin.
Correspondingly, the holomorphic tangent space $T_{0}\mathfrak{S}_{g,n}$ is the complex vector space $\cH^{-1,1}(\Omega_{0},\Sigma)$ of Beltrami differentials, harmonic with respect to the hyperbolic metric on $\Omega_{0}$. Namely, each $M\in\cH^{-1,1}(\Omega_{0},\Sigma)$ has the form
$$M(w)=e^{-\varphi(w)}\overline{Q(w)},\quad Q\in\cH^{2,0}(\Omega_{0},\Sigma).$$
The tangent and cotangent spaces to $\mathfrak{S}_{g,n}$ at each point $(\Sigma^{\mu},w_{1}^{\mu},\dots,w_{n}^{\mu})$ are identified, respectively, with $\cH^{-1,1}(\Omega_{0}^{\mu},\Sigma^{\mu})$ and $\cH^{2,0}(\Omega_{0}^{\mu},\Sigma^{\mu})$, where $\Omega^{\mu}_{0}=F^{\mu}(\Omega_{0})$.
We have the following analog of commutative diagram \eqref{CD-0},
\begin{align}\label{CD-S}
\begin{CD}
\mathbb{H} @> f^{\mu}>> \HH   \\
@VV J V   @VV J_{\mu}  V\\
\Omega_{0} @>   F^{\mu} >> \Omega^{\mu}_{0}
\end{CD}
\end{align}
Here $F^{\vep\mu}$ satisfies Beltrami equation \eqref{F}, is complex-analytic in $\vep$ and $\dot{F}^{\mu}$ is given by \eqref{dot F}.

From the fibration $\jmath: \mathfrak{S}_{g,n}\rightarrow \mathfrak{S}_{g}$ it follows that  $T^{\ast}_{0}\mathfrak{S}_{g,n}$ has a subspace $\jmath^{\ast}(T^{\ast}_{0}\mathfrak{S}_{g})\cong\cH^{2,0}(\Omega,\Sigma)$ with a natural basis $P_{1}(w),\dots,P_{3g-3}(w)$ given by holomorphic automorphic forms  of weight $4$ for $\Sigma$ which represent the cotangent vectors $d\lambda_{1},\dots,d\lambda_{g},da_{3},\dots,da_{g},db_{2},\dots,db_{g}$ as in \cite[formulas (2.2)]{Z89}.
The complementary subspace to $\jmath^{\ast}(T^{\ast}_{0}\mathfrak{S}_{g})$ in  $T^{\ast}_{0}\mathfrak{S}_{g,n}$ is isomorphic to the subspace $T^{\ast}_{0}\cF_{n}(X)$, the cotangent space to the configuration space at the base point $(w_{1},\dots,w_{n})$. Its natural basis, as it follows from \eqref{dot F}, is given by the following meromorphic automorphic forms of weight $4$,
\begin{equation} \label{P-series}
P_{3g-3+i}(w)=-\frac{1}{\pi}\sum_{\si\in\Sigma}R(\si w,w_{i})\si'(w)^{2},\quad w\in\Omega,
\end{equation}
which represent $dw_{i}$, $i=1,\dots,n$.

Denote by $M_{1}(w),\dots, M_{d}(w)$ the basis in $\cH^{-1,1}(\Omega_{0},\Sigma)$, dual to the basis  $P_{1}(w),\dots,P_{d}(w)$ in $\cH^{2,0}(\Omega_{0},\Sigma)$ with respect to the pairing
\begin{equation} \label{pair}
(Q,M)=\iint_{D}Q(w)M(w)d^{2}w.
\end{equation}
Here $M_{3g-3+1},\dots,M_{3g-3+n}$ represent the tangent vectors $\del/\del w_{1},\dots,\del/\del w_{n}$ in $T_{0}\mathfrak{S}_{g,n}$. The corresponding bases in tangent and cotangent spaces to $\mathfrak{S}_{g,n}$ at arbitrary point $(\Sigma^{\mu}; w_{1}^{\mu},\dots,w_{n}^{\mu})$ are defined similarly.

As in Sect.~\ref{genus 0}, we have $\mathcal{S}(J^{-1})(w)=\varphi_{ww}(w)-\frac{1}{2}\varphi_{w}(w)^{2}$. It follows from the asymptotic behavior of $\varphi(w)$ as $w\rightarrow w_{i}$, that
\begin{equation} \label{S-J Schottky}
\mathcal{S}(J^{-1})(w)= \sum_{i=1}^{n}\cE_{i}(w) -\pi\sum_{l=1}^{3g-3+n}c_{l}P_{l}(w),
\end{equation}
where (cf.~\eqref{Schwarz-2} and \eqref{E-series-0})
\begin{equation} \label{E-series-S}
\cE_{i}(w)=\frac{1}{2}\sum_{\si\in\Sigma}\left(\frac{1}{(\si w-w_{i})^{2}}-\frac{1}{\si w(\si w-1)}\right)\si'(w)^{2},\quad i=1,\dots,n,
\end{equation}
are meromorphic automorphic forms of weight $4$ for $\Sigma$ with the second order poles at $\Sigma\cdot w_{i}$,
and $c_{1},\dots,c_{d}$ are the analogs of accessory parameters\footnote{Note that for $i=1,\dots,3g-3$ parameters $c_{i}$ introduced here are $-1/\pi$ times accessory parameters in \cite{TZ87b}. }.

For the first and second variations of the family of hyperbolic metrics on the Schottky domains $\Omega^{\mu}$ we have the same formulas \eqref{A2-formula}--\eqref{W3-formula}.
Finally,  each TZ metric $\langle~,~\rangle_{\mathrm{TZ},i}$ on $T_{g,n}$ is invariant with respect to the automorphism group of the covering
$\pi:T_{g,n}\rightarrow\mathfrak{S}_{g,n}$ and determines a \Ka metric on $\mathfrak{S}_{g,n}$, which we continue to denote by $\langle~,~\rangle_{\mathrm{TZ},i}$, $i=1,\dots,n$.

\section{Liouville action}\label{Liouv}
\subsection{Punctured spheres} \label{spheres} Let $X=\C\setminus\{w_{1},\dots,w_{n-3},0,1\}$ be a marked Riemann surface of type $(0,n)$. The regularized classical Liouville action is defined by the following formula (see \cite{TZ87a}),
\begin{align}
&S(w_{1},\dots,w_{n-3}) \nonumber\\
=&\lim_{\delta\rightarrow 0^{+}}\left(\iint\limits_{X_{\delta}}(|\varphi_{w}|^{2}+e^{\varphi})d^{2}w +2\pi n\log\delta + 4\pi(n-2)\log|\log\delta|\right), \label{L-0}
\end{align}
where $\displaystyle{X_{\delta}=\C\setminus\cup_{i=1}^{n-1}\{|w-w_{i}|<\delta\}\cup\{|w|>1/\delta\}}$. It is a critical value of the \emph{Liouville action}, the Euler-Lagrange functional for the Liouville equation \eqref{Liov} with the asymptotic behavior \eqref{as-1}--\eqref{as-2} on the Riemann surface $X$, and defines the smooth function $S:\cM_{0,n}\rightarrow\R$. Denote by $\del$ and $\delb$ the $(1,0)$ and $(0,1)$ components of de Rham differential on $\cM_{0,n}$. It is proved in  \cite[Theorem 1]{TZ87a},
$$\del S=-2\pi\sum_{i=1}^{n-3}c_{i}R_{i},$$
so that the regularized Liouville action is a generating function for the accessory parameters,
$$c_{i} = -\frac{1}{2\pi}\frac{\del S}{\del w_{i}},\quad i=1,\dots,n-3.$$
Also, according to \cite[Theorem 2]{TZ87a}, the function $-S$ is a \Ka potential for the Weil-Petersson metric on $\cM_{0,n}$,
$$\delb\del S=-2\sqrt{-1}\omega_{\mathrm{WP}}.$$

Let $\frak{M}_{0,n}=\cM_{0,n}/\mathrm{Symm}(n)$ be the moduli space of Riemann surfaces of type $(0,n)$. It is proved in \cite[\S1]{Z89} that $\exp\{S/\pi\}$ determines a Hermitian metric in a holomorphic line bundle $\lambda_{0,n}$ over $\mathfrak{M}_{0,n}$ (see Sect.~\ref{genus 0}), so that
\begin{equation}\label{Chern S}
c_{1}(\lambda_{0,n},\exp\{S/\pi\})=\frac{1}{\pi^{2}}\omega_{\mathrm{WP}}.
\end{equation}

\subsection{Schottky domains} \label{S domains} Let $\Sigma$ be a marked normalized Schottky group of rank $g>1$. The classical Liouville action is a critical value of the Liouville action functional and is defined by the following formula \cite{TZ87b} (see \cite{TT03} for the cohomological interpretation),
\begin{gather} \label{L-S}
S(\varphi) = \frac{\sqrt{-1}}{2}\iint_{D}\omega(\varphi)+\frac{\sqrt{-1}}{2}\sum_{k=2}^{g}\int_{C_{k}}\theta_{L_{k}^{-1}}(\varphi),
\end{gather}
where
$$\omega(\varphi)=(|\varphi_{w}|^{2}+e^{\varphi})dw\wedge d\bar{w}$$
and for $\si\in\mathrm{PSL}(2,\C)$
$$\theta_{\si^{-1}}(\varphi)=\left(\varphi-\frac{1}{2}\log|\si'|^{2}
-\log|c(\si)|^2\right) \left(\frac{\si''}{\si'} dw -
\frac{\ov{\si''}}{\ov{\si'}}d\bar{w}\right).$$
Here for $\si=\left(\begin{smallmatrix} a & b\\c & d\end{smallmatrix}\right)$ we put $c(\gamma)=c$, so that $\theta_{\si^{-1}}(\varphi)=0$ if $c(\si)=0$.

The classical Liouville action is independent of the choice of a fundamental domain $D$ for the marked Schottky group $\Sigma$ and determines a smooth function $S:\mathfrak{S}_{g}\rightarrow \R$. As in Sect.~\ref{spheres}, denoting by $\del$ and $\delb$ the $(1,0)$ and $(0,1)$ components of
de Rham differential on $\mathfrak{S}_{g}$ we have\footnote{See the previous footnote.} (see \cite[Theorems 1,2]{TZ87b})
$$\del S =-2\pi\sum_{l=1}^{3g-3}c_{l}P_{l}\quad\text{and}\quad \delb\del S=-2\sqrt{-1}\omega_{\mathrm{WP}},$$
so that $-S$ is a \Ka potential for the Weil-Petersson metric on $\mathfrak{S}_{g}$.

To define the classical Liouville action for the hyperbolic metric on $\Omega_{0}=\Omega\setminus\Sigma\cdot\{w_{1},\dots,w_{n}\}$ one needs to regularize the area integral in \eqref{L-S}, which diverges due to the asymptotic behavior \eqref{as-1} of $\varphi$ as $w\rightarrow w_{i}$. We do it in the same way as in genus $0$ case. Namely, suppose that
all $w_{1},\dots,w_{n}\in\mathrm{Int}D$, the interior of $D$, and for sufficiently small $\delta>0$ define $D_{\delta}=D\setminus \cup_{i=1}^{n}D_{i}(\delta)$,
where $D_{i}(\delta)=\{|w-w_{i}|<\delta\}\subset D$, $i=1,\dots,n$. It follows from \eqref{as-1} that the following limit exists
\begin{equation} \label{e:bulk}
S_{\mathrm{bulk}}(\varphi)=\lim_{\delta\rightarrow 0^{+}}\left(\frac{\sqrt{-1}}{2}\iint_{D_{\delta}}\omega(\varphi) +2\pi n(\log\delta + 2\log\left|\log\delta\right|)\right).
\end{equation}
\begin{remark} \label{curves} Equivalently, one can define $S_{\mathrm{bulk}}(\varphi)$ by cutting out the interiors $D_{i}\subset D$ of arbitrary simple closed curves $l_{i}$ around $w_{i}$ such that $w_{j}\notin D_{i}$ for $i\neq j$. Namely, let
\begin{align*}
 &\frac{2}{\sqrt{-1}}\tilde{S}_{l}(\varphi)\\
=&\iint_{D\setminus\cup_{i=1}^{n}D_{i}}\omega(\varphi)
+\sum_{i=1}^{n}\int_{l_{i}}\left(\frac{2\log|w-w_i|}{\bar{w}-\bar{w}_i}+
\frac{2\log\left(\log|w-w_i|\right)^2}{\bar{w}-\bar{w}_i}\right)d\bar{w}.
\end{align*}
Then it easily follows from Stokes' theorem and \eqref{as-1} that
$$S_{\mathrm{bulk}}(\varphi)=\lim_{r\rightarrow 0}\tilde{S}_{l}(\varphi),$$
where $r=\max\{\mathrm{diam}(l_{1}),\dots,\mathrm{diam}(l_{n})\}$.
\end{remark}
Now we define the regularized action\footnote{It should be always clear from the context for which space the action $S$ stands for.} as
\begin{equation}\label{e:S-reg}
S=S(D;w_{1},\dots,w_{n})=S_{\mathrm{bulk}}(\varphi)+\frac{\sqrt{-1}}{2}\sum_{k=2}^{g}\int_{C_{k}}\theta_{L_{k}^{-1}}(\varphi).
\end{equation}

This completes the definition of $S$ provided that fundamental domain $D$ is such that $w_{1},\dots,w_{n}\in  \mathrm{Int}\,D$. As in the compact case, $S$ does not depend on the choice of $D$ with the property that $w_{1},\dots,w_{n}\in \mathrm{Int}\, D$. However, $S(D;w_{1},\dots,w_{n})$ depends on the choice of representatives in $\Sigma\cdot\{w_{1},\dots,w_{n}\}$ and no longer determines a function oh the Schottky space $\mathfrak{S}_{g,n}$. Its geometric meaning is the following (cf.~Lemma \ref{h tautological}).

\begin{lemma} \label{transform} The regularized Liouville action determines a Hermitian metric $\exp\{S/\pi\}$ in the holomorphic line bundle $\cL=\cL_{1}\otimes\cdots\otimes\cL_{n}$
over $\mathfrak{S}_{g,n}$.
\end{lemma}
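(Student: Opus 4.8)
The plan is to recognize a Hermitian metric in $\cL=\cL_{1}\otimes\cdots\otimes\cL_{n}$ as a positive quantity that, under a change of the chosen representatives of the marked points, transforms in the same way as the metric $h_{1}\cdots h_{n}$, which is well defined on $\cL$ by Lemma \ref{h tautological}. Since $S(D;w_{1},\dots,w_{n})$ is already independent of the fundamental domain $D$ (for any $D$ with $w_{1},\dots,w_{n}\in\mathrm{Int}\,D$), its only remaining ambiguity is the choice of representatives $w_{i}$ in the orbits $\Sigma\cdot w_{i}$. I would therefore prove that replacing a single representative $w_{i}$ by $\sigma w_{i}$ with $\sigma\in\Sigma$ shifts the action by
$$S(\dots,\sigma w_{i},\dots)=S(\dots,w_{i},\dots)+2\pi\log|\sigma'(w_{i})|,$$
so that $\exp\{S/\pi\}$ acquires the factor $|\sigma'(w_{i})|^{2}$. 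By Lemma \ref{h tautological} the first Fourier coefficient at the translated point is $a_{i}(1)\mapsto\sigma'(w_{i})\,a_{i}(1)$, hence $h_{i}\mapsto|\sigma'(w_{i})|^{2}h_{i}$ and the product $h_{1}\cdots h_{n}$ is multiplied by exactly the same factor. Consequently $\exp\{S/\pi\}/(h_{1}\cdots h_{n})$ is invariant under all such changes, defines a genuine positive function on $\mathfrak{S}_{g,n}$, and the Lemma follows, since a metric times a positive function is again a metric.

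To establish the shift I would compare the two regularized actions built from the \emph{same} automorphic function $\varphi$ on $\Omega_{0}$. Choosing a fundamental domain $D$ with $w_{i}\in\mathrm{Int}\,D$ and a second one $D'$ with $\sigma w_{i}\in\mathrm{Int}\,D'$ and $w_{j}\in\mathrm{Int}\,D'$ for $j\neq i$, I would form the difference $S(D';\dots,\sigma w_{i},\dots)-S(D;\dots,w_{i},\dots)$ and apply the change of variables $w=\sigma v$ to the part of the integral near $\sigma w_{i}$. The transformation law \eqref{phi-transform}, $\varphi(\sigma v)=\varphi(v)-\log|\sigma'(v)|^{2}$, then converts the integrand near $\sigma w_{i}$ into the integrand near $w_{i}$, while the excised coordinate disk $\{|w-\sigma w_{i}|<\delta\}$ pulls back to a disk of radius $\delta/|\sigma'(w_{i})|+o(\delta)$ around $w_{i}$. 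The mismatch between this radius and the radius $\delta$ appearing in the universal counterterm $2\pi n(\log\delta+2\log|\log\delta|)$ of \eqref{e:bulk} is what produces the finite shift. The contributions of the boundary terms $\theta_{L_{k}^{-1}}(\varphi)$ and of the region where $D$ and $D'$ differ are handled exactly as in the proof that $S$ is independent of $D$ in the compact case \cite{TZ87b} (cf.\ \cite{TT03}) and cancel, leaving only the local contribution at the moved puncture.

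The main obstacle is this last local computation: one must track the finite part of the regularized area integral precisely enough to confirm that moving the excised disk by $\sigma$ contributes exactly $2\pi\log|\sigma'(w_{i})|$, with no residual dependence on the auxiliary cutoffs. The delicate point is that the subleading term of the asymptotics \eqref{as-1}, which carries the $\log|a_{i}(1)|$ dependence, interacts both with the logarithmic counterterm and with the Jacobian of $\sigma$. The cleanest bookkeeping uses the curve regularization of Remark \ref{curves}, or most transparently the identity of Remark \ref{h-formula}, whose right-hand side increases by exactly $2\log|\sigma'(w_{i})|$ under $w_{i}\mapsto\sigma w_{i}$ by virtue of \eqref{phi-transform}; this isolates the $\log|a_{i}(1)|$ dependence and shows that the net effect on $\exp\{S/\pi\}$ is the single clean factor $|\sigma'(w_{i})|^{2}$, matching the transformation of $h_{1}\cdots h_{n}$.
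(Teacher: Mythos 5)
Your proposal is correct and follows essentially the same route as the paper: reduce to the transformation law $S\mapsto S+2\pi\log|L_k'(w_i)|$ under moving one marked point by a group element, use \eqref{phi-transform} and a change of variables to localize the difference of the regularized bulk integrals to the annulus between the excised disk of radius $\delta$ and its pullback of radius $\delta/|L_k'(w_i)|$, and extract the finite shift from the leading $|w-w_i|^{-2}$ singularity of $|\varphi_w|^2$ in \eqref{as-1}, the boundary terms cancelling via $L_k^{\ast}(\omega(\varphi))=\omega(\varphi)+d\theta_{L_k^{-1}}(\varphi)$ and Stokes' theorem. The only cosmetic difference is that you phrase the conclusion by comparing with the metric $h_1\cdots h_n$ of Lemma \ref{h tautological}, whereas the paper reads the transformation law directly as the definition of a Hermitian metric in $\cL$.
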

\begin{proof} It is sufficient to prove that for $i=1,\dots,n$,
$$S(\tilde{D};w_{1},\dots,L_{k}w_{i},\dots,w_{n})-S(D;w_{1},\dots,w_{n})=\pi\log|L^{\prime}_{k}(w_{i})|^{2},$$
where $w_{1},\dots,w_{n}\in \mathrm{Int}\,D$ and $w_{1},\dots,w_{i-1},L_{k}w_{i},w_{i+1},\dots,w_{n}\in\mathrm{Int}\,\tilde{D}$. Moreover, it is sufficient to consider
 the case when
$$\tilde{D}=(D\setminus D_{0})\cup L_{k}(D_{0})$$
and $D_{0}\subset D$ is such that $\del D_{0}\cap\del D\subset C_{k}$ and $w_{i}\in D_{0}$, while all other $w_{j}\in D\setminus D_{0}$, $j\neq i$. Indeed, any choice of a fundamental domain for $\Sigma$ is obtained from $D$ by a finite combination of such transformations.

Put
\begin{equation} \label{S-delta}
I_{\delta}(D;w_{1},\dots,w_{n})=\iint_{D_{\delta}}\omega(\varphi) +\sum_{k=2}^{g}\int_{C_{k}}\theta_{L_{k}^{-1}}(\varphi).
\end{equation}
Since $\tilde{C}_{j}=C_{j}$ for $j\neq k$ and $\tilde{C}_{k}=C_{k}-\del D_{0}$, we have
\begin{align*}
\Delta I_{\delta}&=I_{\delta}(\tilde{D};w_{1},\dots,L_{k}w_{i},\dots,w_{n})-I_{\delta}(D;w_{1},\dots,w_{n})\\
&=\iint_{L_{k}(D_{0})\setminus \tilde{D}_{i}(\delta)}\omega(\varphi) - \iint_{D_{0}\setminus D_{i}(\delta)}\omega(\varphi)-\int_{\del D_{0}}\theta_{L_{k}^{-1}}(\varphi).
\end{align*}
It follows from \eqref{phi-transform} that
\begin{align*}
L_{k}^{\ast}(\omega(\vphi))=\omega(\vphi)\circ L_{k}|L_{k}'|^{2}=\omega(\vphi)+d\theta_{L_{k}^{-1}}(\vphi),
\end{align*}
and by Stokes theorem we get
\begin{align*}
\Delta I_{\delta} & =\iint_{D_{0}\setminus L_{k}^{-1}(\tilde{D}_{i}(\delta))}L_{k}^{\ast}(\omega(\vphi)) - \iint_{D_{0}\setminus D_{i}(\delta)}\omega(\vphi)-\int_{\del D_{0}}\theta_{L_{k}^{-1}}(\vphi)\\
&=\iint_{D_{0}\setminus L_{k}^{-1}(\tilde{D}_{i}(\delta))}\omega(\vphi) - \iint_{D_{0}\setminus D_{i}(\delta)}\omega(\vphi)-\int_{\del L_{k}^{-1}(\tilde{D}_{i}(\delta))}\theta_{L_{k}^{-1}}(\vphi)\\
&=\iint_{D_{0}\setminus D(\tilde\delta)}\omega(\vphi) - \iint_{D_{0}\setminus D_{i}(\delta)}\omega(\vphi) +o(1)\quad\text{as}\quad\delta\rightarrow 0,
\end{align*}
where $\tilde\delta=\delta/|L'_{k}(w_{i})|$. Thus for $|L_{k}'(w_{i})|<1$ we have
$$\Delta I_{\delta}=-\iint_{K_{i}}|\varphi_{w}|^{2}dw\wedge d\bar{w}+o(1),$$
where $K_{i}$ is the annulus  $\delta\leq |w-w_{i}|\leq \tilde\delta$.
It now follows from \eqref{as-1} that
$$\Delta I_{\delta}=-4\pi\sqrt{-1}\log|L_{k}'(w_{i})|+o(1).$$

In case $|L_{k}'(w_{i})|>1$ we have
$$\Delta I_{\delta}=\iint_{\tilde{K}_{i}}|\varphi_{w}|^{2}dw\wedge d\bar{w}+o(1)=-4\pi\sqrt{-1}\log|L_{k}'(w_{i})|+o(1),$$
where $\tilde{K}_{i}$ is the annulus  $\tilde\delta\leq |w-w_{i}|\leq\delta$.
 \end{proof}

Combining with Lemma \ref{h tautological} we obtain
\begin{corollary} \label{Combination} Put $H=h_{1}\cdots h_{n}$. Then
\begin{align} \label{L-Schottky}
\cS =S -\pi\log H
\end{align}
determines a smooth real-valued function on $\mathfrak{S}_{g,n}$.
\end{corollary}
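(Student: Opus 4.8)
The plan is to deduce the statement directly from Lemmas \ref{h tautological} and \ref{transform}, which together control exactly the ambiguities that prevent $S$ and $\log H$ from being functions on $\mathfrak{S}_{g,n}$ individually. Neither the regularized action $S=S(D;w_{1},\dots,w_{n})$ nor the product $H=h_{1}\cdots h_{n}$ descends to $\mathfrak{S}_{g,n}$ on its own, since both depend on the choice of representatives in the $\Sigma$-orbit $\Sigma\cdot\{w_{1},\dots,w_{n}\}$ (equivalently, on the choice of a fundamental domain $D$ with $w_{1},\dots,w_{n}\in\mathrm{Int}\,D$). I would show that these two ambiguities cancel precisely in the difference $\cS=S-\pi\log H$.

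First I would record the two transformation laws under the elementary move that replaces a single representative $w_{i}$ by its translate $L_{k}w_{i}$. On the one hand, as observed in the discussion preceding Lemma \ref{h tautological}, the first Fourier coefficient $a_{i}(1)$ is multiplied by $L_{k}'(w_{i})$ under this move, so $h_{i}=|a_{i}(1)|^{2}$ becomes $h_{i}|L_{k}'(w_{i})|^{2}$ and hence
\[
\pi\log H \;\longmapsto\; \pi\log H+\pi\log|L_{k}'(w_{i})|^{2}.
\]
On the other hand, by Lemma \ref{transform}, the companion change of fundamental domain $D\mapsto\tilde{D}=(D\setminus D_{0})\cup L_{k}(D_{0})$ with $w_{i}\in D_{0}$ produces exactly
\[
S(\tilde{D};w_{1},\dots,L_{k}w_{i},\dots,w_{n})-S(D;w_{1},\dots,w_{n})=\pi\log|L_{k}'(w_{i})|^{2}.
\]
The key observation is that this is precisely the domain change associated to the orbit move on $w_{i}$, so the two displayed increments are identical. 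Subtracting, $\cS=S-\pi\log H$ is left unchanged by the move.

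Since, as already noted in the proof of Lemma \ref{transform}, every admissible fundamental domain (and every choice of representatives with each $w_{i}\in\mathrm{Int}\,D$) is obtained from a fixed reference one by a finite composition of such elementary moves, invariance under a single move propagates to invariance under all of them. Therefore $\cS$ is independent of the choices and defines a single-valued function on $\mathfrak{S}_{g,n}$. It then remains only to note that $\cS$ is smooth and real-valued: $S$ is a smooth real-valued function of the Schottky data by its construction in \eqref{e:S-reg}, the form $\tfrac{\sqrt{-1}}{2}\omega(\varphi)$ and the boundary integrals of $\theta_{L_{k}^{-1}}(\varphi)$ being real, while $\log H=\sum_{i}\log h_{i}$ is smooth and real because each $h_{i}>0$ depends smoothly on the moduli (cf.\ Remark \ref{h-formula}).

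The computation is essentially contained in the two cited lemmas, so I do not anticipate a genuine obstacle. The only point requiring care is verifying that the domain change appearing in Lemma \ref{transform} is literally the same move that rescales $h_{i}$ in Lemma \ref{h tautological}, and that the two increments carry the same sign $+\pi\log|L_{k}'(w_{i})|^{2}$, so that they cancel rather than reinforce one another.
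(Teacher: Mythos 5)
Your argument is correct and is essentially the paper's own: the paper obtains the corollary by simply combining Lemmas \ref{h tautological} and \ref{transform} (both $\exp\{S/\pi\}$ and $H$ are Hermitian metrics in the same line bundle $\cL$, so their ratio is a well-defined positive function), and your proof just unpacks that combination by checking that the two transformation factors $\pi\log|L_{k}'(w_{i})|^{2}$ coincide and cancel in the difference. The sign and identification of the elementary move are verified correctly, so nothing further is needed.
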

\begin{remark}\label{delta-i} Let $D(w_{i};\delta_{i})=\{w\in\C: |w-w_{i}|<\delta_{i}\}$, where $\delta_{i}=|a_{i}(1)|\delta$. Since $a_{i}(1)\mapsto L'_{k}(w_{i})a_{i}(1)$ under the transformation $w_{i}\mapsto L_{k}(w_{i})$, we have that up to $O(\delta^{2})$ terms $D(L_{k}w_{i};\delta_{i})=L_{k}\left(D(w_{i};\delta_{i})\right)$. This shows that \eqref{L-Schottky} can be also defined as
\begin{align} \label{L-Schottky-2}
\cS =\lim_{\delta\rightarrow 0^{+}}&\left(\frac{\sqrt{-1}}{2}\iint_{D_{\delta}(h)}\omega(\varphi)
+2\pi n(\log\delta + 2\log\left|\log\delta\right|)\right) \\
&\hspace{3.5cm}+\frac{\sqrt{-1}}{2}\sum_{k=2}^{g}\int_{C_{k}}\theta_{L_{k}^{-1}}(\varphi), \notag
\end{align}
where $D_{\delta}(h)=D\setminus \cup_{i=1}^{n}D(w_{i}; \delta_{i})$.
\end{remark}

\section{Potentials for the WP and TZ metrics}\label{s:potential}
Here using first Fourier coefficients of Klein's Hauptmodul we construct a global potential for the TZ metric on  $\cM_{0,n}$.
For the Schottky space $\mathfrak{S}_{g,n}$ we prove
that the first Chern forms of the line bundles $\cL_{i}$ with Hermitian metrics $h_{i}$ are $\dfrac{4}{3}\omega_{\mathrm{TZ},i}$. We also prove  that $\dfrac{1}{\pi^{2}}\omega_{\mathrm{WP}}$ is the first Chern form of the line bundle $\cL=\cL_{1}\otimes\cdots\otimes\cL_{n}$ with the Hermitian metric $\exp\{S/\pi\}$, where $S$ is the regularized classical Liouville action \eqref{L-Schottky}. As a corollary, the following combination $\omega_{\mathrm{WP}}-\dfrac{4\pi^{2}}{3}\omega_{\mathrm{TZ}}$ of WP and TZ metics has a global \Ka potential on $\mathfrak{S}_{g,n}$.
\subsection{Potential for the TZ metric on $\cM_{0,n}$} \label{TZ potential}
As in Sect.~\ref{genus 0}, let $\Gamma$ be marked normalized Fuchsian group of type $(0,n)$ uniformizing the Riemann surface $X=\C\setminus\{w_{1},\dots,w_{n-3},0,1\}$, let $J:\HH\rightarrow X$ be the normalized covering map, and let $h_{i}=|a_{i}(1)|^{2}$, $i=1,\dots,n-1$, and $h_n=|a_n(-1)|^2$ be smooth positive functions on $\cM_{0,n}$. According to Remark \ref{h-formula} we have
\begin{align*}
\log h_{i} & =\lim_{w\rightarrow w_{i}}\left(\log|w-w_{i}|^{2}+\frac{2e^{-\varphi(w)/2}}{|w-w_{i}|}\right),\quad i=1,\dots,n-1,\\
\intertext{and}
\log h_{n} & =\lim_{w\rightarrow\infty}\left(\log|w|^{2}-\frac{2e^{-\varphi(w)/2}}{|w|}\right),
\end{align*}
where the last formula follows from \eqref{as-2}.

\begin{lemma} \label{TZ-del} We have for all $i=1,\dots,n$,
$$h^{-1}_{i}\frac{\del h_{i}}{\del w_{k}}=\dot{F}^{k}_{w}(w_{i}),\quad k=1,\dots,n-3.$$

\end{lemma}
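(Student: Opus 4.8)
The plan is to pull back the computation from $\cM_{0,n}$ to the \Te space along the covering $p$ and to differentiate the limit formula for $\log h_{i}$ along a harmonic Beltrami direction. Since $dp_{[\mu]}(\mu_{k}^{\mu})=\del/\del w_{k}$ by \eqref{motion}, the holomorphic derivative $\del/\del w_{k}$ is realized as $\left.\del/\del\vep\right|_{0}$ along the family $X^{\vep\mu_{k}}=F^{\vep\mu_{k}}(X)$, where $\mu_{k}$ is the basis vector of \eqref{M}. As $h_{i}$ is real-valued and $\del/\del\vep$ is the Wirtinger derivative, it suffices to show
\[
\left.\frac{\del}{\del\vep}\right|_{0}\log h_{i}^{\vep\mu_{k}}=\dot{F}^{k}_{w}(w_{i}),\qquad\text{which equals } h_{i}^{-1}\frac{\del h_{i}}{\del w_{k}}.
\]

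The first step is to establish, for each finite cusp $i=1,\dots,n-1$, the clean identity
\[
\log h_{i}^{\vep\mu_{k}}=\log\bigl|F^{\vep\mu_{k}}_{w}(w_{i})\bigr|^{2}+\Phi(\vep),\qquad \Phi(\vep)=\lim_{w\to w_{i}}\Bigl(\log|w-w_{i}|^{2}+\frac{2\,G_{\vep}(w)^{-1/2}}{|w-w_{i}|}\Bigr),
\]
where $G_{\vep}:=(F^{\vep\mu_{k}})^{\ast}(e^{\varphi^{\vep\mu_{k}}})=(J^{-1})^{\ast}(f^{\vep\mu_{k}})^{\ast}(\rho)$, so that $G_{0}=e^{\varphi}$ and $\Phi(0)=\log h_{i}$ by the limit formula recalled above (Remark \ref{h-formula}). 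To obtain it, apply that limit formula on the deformed surface $X^{\vep\mu_{k}}$ in its coordinate $\tilde{w}$ at the cusp $w_{i}^{\vep\mu_{k}}$, and substitute $\tilde{w}=F^{\vep\mu_{k}}(w)$. The coefficient $M$ of \eqref{M} vanishes at the punctures (since $e^{-\varphi}$ decays while $Q_{i}$ has at most a simple pole), so $F^{\vep\mu_{k}}$ is complex differentiable at $w_{i}$, giving $\tilde{w}-w_{i}^{\vep\mu_{k}}=F^{\vep\mu_{k}}_{w}(w_{i})(w-w_{i})+o(|w-w_{i}|)$; combined with $e^{-\varphi^{\vep\mu_{k}}(\tilde w)/2}=|F^{\vep\mu_{k}}_{w}(w)|\,G_{\vep}(w)^{-1/2}$ (the convention $(F^{\mu})^{\ast}(e^{\alpha\varphi^{\mu}})=[(F^{\mu})^{\ast}(e^{\varphi^{\mu}})]^{\alpha}$) this yields the identity once one checks, using the $C^{1}$-regularity of $F^{\vep\mu_{k}}$ near $w_{i}$, that the $(1+o(1))$ correction factors multiply a term of size $O(|\log|w-w_{i}||)$ and therefore do not survive the limit.

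Next I would differentiate both terms. The first is immediate: $F^{\vep\mu_{k}}$ is holomorphic in $\vep$ with $F^{0}=\id$, so $\left.\del/\del\vep\right|_{0}\log\overline{F^{\vep\mu_{k}}_{w}(w_{i})}=0$ and, since $\dot{F}^{k}=\left.\del F^{\vep\mu_{k}}/\del\vep\right|_{0}$ and $F^{0}_{w}(w_{i})=1$,
\[
\left.\frac{\del}{\del\vep}\right|_{0}\log\bigl|F^{\vep\mu_{k}}_{w}(w_{i})\bigr|^{2}=\left.\frac{\del}{\del\vep}\right|_{0}\log F^{\vep\mu_{k}}_{w}(w_{i})=\dot{F}^{k}_{w}(w_{i}).
\]
It then remains to prove $\left.\del/\del\vep\right|_{0}\Phi(\vep)=0$. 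For each fixed $w$, the Ahlfors-type first-variation formula \eqref{A2-formula} with $\alpha=-\tfrac12$ gives $\left.\del/\del\vep\right|_{0}G_{\vep}(w)^{-1/2}=\left.\del/\del\vep\right|_{0}(F^{\vep\mu_{k}})^{\ast}(e^{-\varphi^{\vep\mu_{k}}/2})(w)=0$, so differentiating under the limit would produce the desired $0$.

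The main obstacle is exactly this interchange of $\lim_{w\to w_{i}}$ with $\left.\del/\del\vep\right|_{0}$: the $\vep$-derivative of $G_{\vep}(w)^{-1/2}$ vanishes only pointwise, and the convergence as $w\to w_{i}$ is not a priori uniform, so a careless interchange could introduce an anomalous boundary contribution. I would resolve it with the same asymptotic analysis sketched in the Remark following \eqref{F-limits-2}: using the integral representation \eqref{dot F} for $\dot{F}^{k}$, the cusp asymptotics \eqref{as-1} of $\varphi$, and the estimate \eqref{F-limits-1}, one shows that $G_{\vep}(w)^{-1/2}-e^{-\varphi(w)/2}$ has no $\vep$-linear part and that its $\bar\vep$-part divided by $|w-w_{i}|$ has a finite limit as $w\to w_{i}$, whence $\Phi(\vep)-\Phi(0)=O(\bar\vep)+O(|\vep|^{2})$ and $\left.\del/\del\vep\right|_{0}\Phi=0$. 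This gives the claim for $i=1,\dots,n-1$. The case $i=n$ is structurally identical, now based on \eqref{phi}, the asymptotics \eqref{as-2}, and the behavior \eqref{F-limits-2} at the cusp $w_{n}=\infty$, with $\dot{F}^{k}_{w}(w_{n})$ read off in the local coordinate at $\infty$.
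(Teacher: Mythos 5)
Your proposal is correct and follows essentially the same route as the paper's proof: realize $\del/\del w_{k}$ via \eqref{motion} as $\left.\del/\del\vep\right|_{0}$ along $\mu_{k}$, pull the limit formula for $\log h_{i}$ back through $F^{\vep\mu_{k}}$, kill the variation of the conformal factor with \eqref{A2-formula} (for $\alpha=-\tfrac12$), and identify the surviving Jacobian term as $\dot{F}^{k}_{w}(w_{i})$ using \eqref{F-limits-1}--\eqref{F-limits-2}. The only difference is organizational --- you isolate $\log|F^{\vep\mu_{k}}_{w}(w_{i})|^{2}$ before passing to the limit and reduce to $\Phi'(0)=0$, whereas the paper differentiates the whole expression under the limit sign and simply asserts the uniformity needed for the interchange, which you instead justify by the asymptotic analysis of the Remark following \eqref{F-limits-2}.
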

\begin{proof}
For given $X=\C\setminus\{w_{1},\dots,w_{n-3},0,1\}\simeq\Gamma\bk\HH$ there is an isomorphism $T_{g,n}\simeq T(\Gamma)$ (see Sect.~\ref{complex}). Consider first the case $i=n$. According to \eqref{motion}, it is sufficient to show that
$$\left.\left(\frac{\del \log h^{\vep\mu}_{n}}{\del\vep}\right)\right|_{\vep=0}=\dot{F}_{w}^{k}(\infty),\quad\text{where}\quad\mu=\mu_k.$$
Using that $F^{\vep\mu}$ is holomorphic in $\vep$ at $\vep=0$ and formulas \eqref{as-2}, \eqref{F-limits-2}, \eqref{A2-formula}, we get
\begin{align*}
\begin{split}
&\left.\left(\frac{\del h^{\vep\mu}_{n}}{\del\vep}\right)\right|_{\vep=0}\\
 & =\lim_{w\rightarrow\infty}\left\{\!\left.\left(\frac{\del}{\del\vep}\right)\right|_{\vep=0}\left(\log|F^{\vep\mu}|^{2}-2(F^{\vep\mu})^{\ast}(e^{-\frac{1}{2}\varphi^{\vep\mu}})\left|\frac{F^{\vep\mu}_{w}}{F^{\vep\mu}}\right|\right)(w)\!\right\}\\
& =\lim_{w\rightarrow\infty}\left( \frac{\dot{F}^{k}(w)}{w}  -\frac{e^{-\varphi(w)/2}(w\dot{F}^k_{w}(w)-\dot{F}^k(w))|w|}{w^{2}\bar{w}}\right)\\
&=\dot{F}^{k}_{w}(\infty).
\end{split}
\end{align*}
Interchanging the order of the limit $w\rightarrow\infty$ and differentiation is legitimate since convergence in the above formula and in the definition of $h_{n}$ is uniform in a neighborhood of an arbitrary point $(w_{1},\dots,w_{n-3})\in\cM_{0,n}$. The case $i\neq n$ is considered similarly.
\end{proof}

Let $\del$ and $\delb$ be, respectively, $(1,0)$ and $(0,1)$ components of the de Rham differential $d$ on $\cM_{0,n}$. We have the following result.
\begin{proposition} \label{TZ-potential} The functions $-\log h_{i}:\cM_{0,n}\rightarrow \R_{>0}$, $i=1,\dots,n-1$, and $\log h_{n}$ are \Ka potential for the $4\pi/3$ multiples of TZ metrics,
$$\delb\del \log h_{i}=-\frac{8\pi\sqrt{-1}}{3}\omega_{\mathrm{TZ},i},\quad i\neq n\quad\text{and}\quad \delb\del \log h_{n}=\frac{8\pi\sqrt{-1}}{3}\omega_{\mathrm{TZ},n}.$$
\end{proposition}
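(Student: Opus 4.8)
The plan is to compute the $(1,1)$-form $\delb\del\log h_i$ by evaluating the mixed second derivatives $\del^2\log h_i/\del w_k\del\bar w_l$ and matching them against the TZ pairing $\langle\mu_k,\mu_l\rangle_{\mathrm{TZ},i}$. By \eqref{motion} the Bers coordinates agree with the coordinates $w_k$ on $\cM_{0,n}$ under the covering $p$, so it suffices to compute the mixed second variation $\del_{\vep_k}\del_{\bar\vep_l}|_{\vep=0}\log h_i^{\vep_k\mu_k+\vep_l\mu_l}$, one derivative beyond Lemma \ref{TZ-del}. First I would rewrite $\log h_i^{\mu}$ through the base surface via the pullback representation already used in the proof of Lemma \ref{TZ-del}: for $i\neq n$, $\log h_i^{\mu}=\lim_{w\to w_i}(\log|F^{\mu}(w)-F^{\mu}(w_i)|^{2}+2\,(F^{\mu})^{\ast}(e^{-\varphi^{\mu}/2})\,|F^{\mu}_{w}(w)/(F^{\mu}(w)-F^{\mu}(w_i))|)$, and similarly for $i=n$ with $\log|F^{\mu}|^{2}-2(F^{\mu})^{\ast}(e^{-\varphi^{\mu}/2})|F^{\mu}_{w}/F^{\mu}|$ as $w\to\infty$.

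Next I would apply $\del_{\vep_k}\del_{\bar\vep_l}|_{0}$ term by term. Since $F^{\vep\mu}$ is holomorphic in $\vep$, the summand $\log|\cdots|^{2}$ splits into holomorphic and antiholomorphic parts and contributes nothing to the mixed derivative. Writing the second summand as $G^{\mu}K^{\mu}$ with $G^{\mu}=(F^{\mu})^{\ast}(e^{-\varphi^{\mu}/2})$ and $K^{\mu}$ the modulus factor, the product rule leaves four terms; the two mixed terms vanish because $\del_{\vep_k}|_{0}G=\del_{\bar\vep_l}|_{0}G=0$ by Ahlfors' lemma \eqref{A2-formula} (with $\alpha=-\tfrac12$) together with reality of the metric density. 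The term $G|_{0}\,\del_{\vep_k}\del_{\bar\vep_l}|_{0}K$ vanishes in the limit $w\to w_i$: since $K$ is the modulus of a function holomorphic in $\vep$, its mixed second derivative is controlled by the first variations of $F^{\mu}$, and the refined asymptotics \eqref{F-limits-1}--\eqref{F-limits-2} force the resulting expression to decay. The surviving term is $(\del_{\vep_k}\del_{\bar\vep_l}|_{0}G)\,K|_{0}$, and by the second-variation formula \eqref{W3-formula} with $\alpha=-\tfrac12$ this equals $-\tfrac14 e^{-\varphi(w)/2}(f_{\mu_k\bar\mu_l}\circ J^{-1})(w)\cdot K|_{0}$, with $K|_{0}=1/|w-w_i|$ (resp.\ $1/|w|$).

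Then comes the cusp limit, which I expect to be the crux. Collecting the factor $2$ and the sign, for $i\neq n$ one gets $\del_{\vep_k}\del_{\bar\vep_l}|_{0}\log h_i=-\tfrac12\lim_{w\to w_i}\tfrac{e^{-\varphi(w)/2}}{|w-w_i|}(f_{\mu_k\bar\mu_l}\circ J^{-1})(w)$. Using the Fourier expansion \eqref{J-1} and \eqref{as-1} (equivalently $e^{-\varphi(w)/2}=\im(\si_i z)\,|J'(\si_i z)|$ with $J(\si_i z)=w$), the prefactor $e^{-\varphi(w)/2}/|w-w_i|$ is asymptotic to $2\pi$ times the cusp height as $z\to\sqrt{-1}\,\infty$, so the limit is reduced to the boundary value of $f_{\mu_k\bar\mu_l}$, and \eqref{t-z} identifies it with $\tfrac43\langle\mu_k,\mu_l\rangle_{\mathrm{TZ},i}$, giving $-\tfrac{4\pi}{3}\langle\mu_k,\mu_l\rangle_{\mathrm{TZ},i}$. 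For $i=n$ the opposite sign in the definition of $\log h_n$ (reflecting the pole of $J$ at the cusp $\infty$, cf.\ \eqref{J-n} and \eqref{as-2}) flips this to $+\tfrac{4\pi}{3}\langle\mu_k,\mu_l\rangle_{\mathrm{TZ},n}$. Finally I would translate the matrix of second derivatives into the $(1,1)$-form: since $\omega_{\mathrm{TZ},i}=\tfrac{\sqrt{-1}}{2}\sum_{k,l}\langle\mu_k,\mu_l\rangle_{\mathrm{TZ},i}\,dw_k\wedge d\bar w_l$ and $\delb\del\log h_i=\sum_{k,l}\tfrac{\del^{2}\log h_i}{\del w_k\del\bar w_l}\,d\bar w_l\wedge dw_k$, the relation $d\bar w_l\wedge dw_k=-dw_k\wedge d\bar w_l$ yields $\delb\del\log h_i=-\tfrac{8\pi\sqrt{-1}}{3}\omega_{\mathrm{TZ},i}$ for $i\neq n$ and $+\tfrac{8\pi\sqrt{-1}}{3}\omega_{\mathrm{TZ},n}$.

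The main obstacle is the cusp-limit step: one must simultaneously control the logarithmic blow-up of $e^{-\varphi/2}/|w-w_i|$ and the boundary behavior of the automorphic function $f_{\mu_k\bar\mu_l}$ so that their product converges to precisely $\tfrac43\langle\mu_k,\mu_l\rangle_{\mathrm{TZ},i}$, and one must justify interchanging the limit $w\to w_i$ with the two $\vep$-differentiations, as flagged in the proof of Lemma \ref{TZ-del} (this rests on the locally uniform convergence in \eqref{as-1}--\eqref{as-2} and \eqref{F-limits-1}--\eqref{F-limits-2}). Verifying that the $K$-factor contributes nothing, rather than a finite correction, is the other place where the refined error terms are essential.
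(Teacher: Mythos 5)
Your proposal is correct and follows essentially the same route as the paper's proof: rewrite $\log h_i^{\mu}$ via the pullback to the base surface, kill the $\log|\cdot|^{2}$ term and the cross terms by holomorphy in $\vep$ and Ahlfors' lemma \eqref{A2-formula}, discard the modulus-factor term using \eqref{F-limits-1}--\eqref{F-limits-2}, and evaluate the surviving term with \eqref{W3-formula} and the cusp limit \eqref{t-z}, with the sign flip at $i=n$ coming from \eqref{as-2} versus \eqref{as-1}. The only cosmetic difference is that the paper first reduces to the diagonal case $j=k$ by polarization, whereas you compute the general mixed derivative directly.
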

\begin{proof} First consider the case $i=n$. We need to prove that
$$\frac{\del^{2}\log h_{n}}{\del w_{j}\del\bar{w}_{k}}=\frac{4\pi}{3}\left\langle\frac{\del}{\del w_{j}},\frac{\del}{\del w_{k}}\right\rangle_{\mathrm{TZ},n},\quad j,k=1,\dots,n-3.$$
By polarization, it is sufficient to consider the case $j=k$. According to Sect.~\ref{complex}, for given $X=\C\setminus\{w_{1},\dots,w_{n-3},0,1\}\simeq\Gamma\bk\HH$ we can use the isomorphism $T_{g,n}\simeq T(\Gamma)$. Thus we need to show that
$$\left.\left(\frac{\del^{2}\log h^{\vep\mu}_{n}}{\del\vep\del\bar{\vep}}\right)\right|_{\vep=0}=\frac{4\pi}{3}\Vert\mu\Vert^{2}_{\mathrm{TZ},n},\quad\text{where}\quad\mu=\mu_k.$$
Using that $F^{\vep\mu}$ is holomorphic in $\vep$ at $\vep=0$ and formulas \eqref{F-limits-2}, \eqref{A2-formula}, \eqref{W3-formula}, \eqref{as-2}, we get
\begin{align*}
&\left.\left(\frac{\del^{2}\log h^{\vep\mu}_{n}}{\del\vep\del\bar{\vep}}\right)\right|_{\vep=0}\\
  &=\left.\left(\frac{\del^{2}}{\del\vep\del\bar{\vep}}\right)\right|_{\vep=0}\left\{\lim_{w\rightarrow\infty}\left(\log|F^{\vep\mu}|^{2}-2(F^{\vep\mu})^{\ast}(e^{-\frac{1}{2}\varphi^{\vep\mu}})\left|\frac{F^{\vep\mu}_{w}}{F^{\vep\mu}}\right|\right)(w)\right\}\\
&=-2\lim_{w\rightarrow\infty}\left\{\vphantom{ \left|\left.\frac{\del}{\del\vep}\right|_{\vep=0}\left(\frac{F^{\vep\mu}_{w}(w)}{F^{\vep\mu}(w)}\right)^{\frac{1}{2}}\right|^{2}} \frac{1}{|w|}\left.\left(\frac{\del^{2}}{\del\vep\del\bar{\vep}}\right)\right|_{\vep=0}(F^{\vep\mu})^{\ast}(e^{-\frac{1}{2}\varphi^{\vep\mu}})(w)  \right. \\
& \hspace{5cm} + \left. e^{-\frac{1}{2}\varphi(w)}\left|\left.\frac{\del}{\del\vep}\right|_{\vep=0}\left(\frac{F^{\vep\mu}_{w}(w)}{F^{\vep\mu}(w)}\right)^{\frac{1}{2}}\right|^{2}
 \right\}\\
&=\lim_{w\rightarrow\infty}\left\{\frac{1}{2}\log|w|f_{\mu\bar\mu}(J^{-1}(w)) -\frac{1}{2}e^{-\frac{1}{2}\varphi(w)}\frac{\left|w\dot{F}_{w}(w)-\dot{F}(w)\right|^{2}}{\left|w\right|^{3}}\right\}\\
&=\pi\lim_{w\rightarrow\infty} yf_{\mu\bar\mu}(z) =\frac{4\pi}{3}\Vert\mu\Vert^{2}_{\mathrm{TZ},n}.
\end{align*}

The case $i\neq n$ is considered similarly. Here
$$\lim_{w\rightarrow w_{i}}\frac{\log|w-w_{i}|}{\im(\sigma_{i}^{-1}(J^{-1}(w))}=-2\pi,$$
and we get the different sign from the case of $i=n$.
\end{proof}
\begin{remark} \label{Wol} One can also prove Proposition \ref{TZ-potential} by using Lemma \ref{TZ-del} and another Wolpert's formula
$$\left.\frac{\del}{\del\bar\vep}\right|_{\vep=0}(f^{\vep\mu})^{\ast}(\mu_{i}^{\vep\mu})(z)=-\left(\frac{\del}{\del\z} y^{2}\frac{\del }{\del\z}\right)\!f_{\mu\bar\mu_{i}}(z)$$
(see \cite[Theorem 2.9]{W86}).
\end{remark}
\begin{remark} Let $\cL_{i}$ be the tautological line bundle on $\cM_{0,n}$ --- a holomorphic line bundle dual to the vertical tangent bundle of $\cM_{0,n}$ along the fibers of the projection $p_{i}:\cM_{0,n}\rightarrow\cM_{0,n-1}$ which `forget' the marked point $z_{i}$, $i=1,\dots,n$. The line bundles
$\cL_{i}$ are holomorphically trivial over $\cM_{0,n}$ (but not over $\overline{\cM_{0,n}}$), and the functions $h_{i}$ on $\cM_{0,n}$ are
trivializations of the Hermitian metrics in $\cL_{i}$, introduced in \cite{Weng01, W07}.
\end{remark}

By Lemma \ref{Hermite metric} and Proposition \ref{TZ-potential}, we have
\begin{corollary} \label{TZ--potential} The function $-\log H=\log h_{n}-\log h_{1}-\dots -\log h_{n-1}$ is  a potential for the $4\pi/3$ multiple of the TZ metric on $\cM_{0,n}$. The first Chern form of the Hermitian line bundle $(\lambda_{0,n},H)$ over $\frak{M}_{0,n}$ is given by
$$c_{1}(\lambda_{0,n},H)=\frac{4}{3}\omega_{\mathrm{TZ}}.$$
\end{corollary}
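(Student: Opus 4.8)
The statement is an immediate consequence of Proposition \ref{TZ-potential} combined with the descent property established in Lemma \ref{Hermite metric}. The plan is first to compute $\delb\del\log H$ on $\cM_{0,n}$ by linearity, and then to transfer the result into a Chern-form identity on the quotient $\mathfrak{M}_{0,n}$. No new analytic input is needed: all of the second-variation content is already packaged in Proposition \ref{TZ-potential}.

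First I would write $\log H=\sum_{i=1}^{n-1}\log h_{i}-\log h_{n}$ and apply $\delb\del$ termwise. Using the two formulas of Proposition \ref{TZ-potential}, namely $\delb\del\log h_{i}=-\tfrac{8\pi\sqrt{-1}}{3}\omega_{\mathrm{TZ},i}$ for $i\neq n$ and $\delb\del\log h_{n}=\tfrac{8\pi\sqrt{-1}}{3}\omega_{\mathrm{TZ},n}$, the sign on each $-\log h_{i}$ combines with the sign in the proposition so that every term contributes with the same sign, and the sum collapses to
$$
\delb\del(-\log H)=\frac{8\pi\sqrt{-1}}{3}\sum_{i=1}^{n}\omega_{\mathrm{TZ},i}=\frac{8\pi\sqrt{-1}}{3}\,\omega_{\mathrm{TZ}}.
$$
Since $2\sqrt{-1}\cdot\tfrac{4\pi}{3}\omega_{\mathrm{TZ}}=\tfrac{8\pi\sqrt{-1}}{3}\omega_{\mathrm{TZ}}$, this is precisely the assertion that $-\log H$ is a global \Ka potential for $\tfrac{4\pi}{3}\omega_{\mathrm{TZ}}$ on $\cM_{0,n}$, which settles the first claim.

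For the Chern-form identity I would pass to $\mathfrak{M}_{0,n}=\cM_{0,n}/\mathrm{Symm}(n)$. By Lemma \ref{Hermite metric}, $H$ satisfies $H(\sigma\cdot\bm{w})|f_{\sigma}(\bm{w})|^{2}=H(\bm{w})$ and so defines a Hermitian metric in $\lambda_{0,n}$; since each cocycle factor $f_{\sigma}$ is holomorphic, $\delb\del\log|f_{\sigma}|^{2}=0$, whence the $(1,1)$-form $\delb\del\log H$ is $\mathrm{Symm}(n)$-invariant and descends to $\mathfrak{M}_{0,n}$. I would then apply the first Chern form in the normalization $c_{1}(\lambda_{0,n},H)=\tfrac{\sqrt{-1}}{2\pi}\delb\del\log H$ — the same convention under which \eqref{Chern S} reads $c_{1}(\lambda_{0,n},\exp\{S/\pi\})=\tfrac{1}{\pi^{2}}\omega_{\mathrm{WP}}$, as one checks using $\delb\del S=-2\sqrt{-1}\omega_{\mathrm{WP}}$ — and substitute $\delb\del\log H=-\tfrac{8\pi\sqrt{-1}}{3}\omega_{\mathrm{TZ}}$ from the previous step to obtain $c_{1}(\lambda_{0,n},H)=\tfrac{4}{3}\omega_{\mathrm{TZ}}$.

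There is essentially no obstacle here, since the hard work lives in Proposition \ref{TZ-potential} and Lemma \ref{Hermite metric}. The only points that demand attention are the bookkeeping of signs in the alternating combination defining $H$ and the confirmation that the Chern-form normalization agrees with the established WP result; both are handled by the computations indicated above.
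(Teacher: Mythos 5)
Your proposal is correct and follows exactly the route the paper takes: the paper derives Corollary \ref{TZ--potential} directly from Lemma \ref{Hermite metric} and Proposition \ref{TZ-potential}, with the same termwise application of $\delb\del$ to $\log H=\sum_{i=1}^{n-1}\log h_{i}-\log h_{n}$ and the same Chern-form normalization $c_{1}=\frac{\sqrt{-1}}{2\pi}\delb\del\log h$ used in Theorem \ref{S-TZWP}. Your sign bookkeeping and the consistency check against \eqref{Chern S} are both accurate.
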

For each marked Fuchsian group $\Ga$ denote by $r(z)$ the projection of the regular automorphic form $-\mathcal{S}(J)(z)$ of weight $4$ to the subspace of cusp forms,
$$r(z)=\sum_{i=1}^{n-3}\alpha_{i}r_{i}(z),\quad\text{where}\quad \alpha_{i}=-\iint\limits_{\Ga\bk\HH}\mathcal{S}(J)(z)\mu_{i}(z)d^{2}z.$$
According to Sect.~\ref{genus 0}, the family of cusp forms $r(z)$ for varying $\Gamma$ determines a $(1,0)$-form $r$ on $T_{0,n}$. Denote by
$\vartheta=\sum_{i=1}^{n-3}\alpha_{i}dw_{i}$ the corresponding $(1,0)$-form on $\cM_{0,n}$.
It follows from \eqref{motion}, \eqref{Schwarz-2} that $p^{\ast}(\vartheta)=r$, where $p:T_{0,n}\rightarrow\cM_{0,n}$.

Put $\cS=S-\pi \log H$. Combining Lemma \ref{TZ-del} with the proof of Theorem 1 in \cite{TZ87a} and using Proposition \ref{TZ-potential} and Theorem 2 in \cite{TZ87a}, we obtain the following result.
\begin{corollary} \label{cS} The function $\cS:\cM_{0,n}\rightarrow\R$ satisfies
$$\del\cS=2\vartheta$$
and
\begin{equation} \label{trivial}
\delb\del\cS=-2\sqrt{-1}\left(\omega_{\mathrm{WP}}-\frac{4\pi^{2}}{3}\omega_{\mathrm{TZ}}\right).
\end{equation}
\end{corollary}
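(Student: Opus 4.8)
The plan is to treat the two displayed identities separately, establishing the $(1,1)$-form identity \eqref{trivial} first (it follows almost formally) and then the $(1,0)$-form identity $\del\cS = 2\vartheta$, which carries the genuine content. For \eqref{trivial} I would apply $\delb\del$ to $\cS = S - \pi\log H$ termwise, where $\log H = \sum_{i=1}^{n-1}\log h_i - \log h_n$. By Theorem 2 of \cite{TZ87a} one has $\delb\del S = -2\sqrt{-1}\,\omega_{\mathrm{WP}}$, and Proposition \ref{TZ-potential} gives $\delb\del\log h_i = -\tfrac{8\pi\sqrt{-1}}{3}\omega_{\mathrm{TZ},i}$ for $i\ne n$ together with $\delb\del\log h_n = +\tfrac{8\pi\sqrt{-1}}{3}\omega_{\mathrm{TZ},n}$. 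The point is that $\log h_n$ enters $\log H$ with a minus sign, which exactly offsets the opposite sign in Proposition \ref{TZ-potential}, so that all $n$ terms add with the same sign:
\[
\delb\del\log H = -\frac{8\pi\sqrt{-1}}{3}\sum_{i=1}^{n}\omega_{\mathrm{TZ},i} = -\frac{8\pi\sqrt{-1}}{3}\,\omega_{\mathrm{TZ}}.
\]
Substituting then gives $\delb\del\cS = -2\sqrt{-1}\,\omega_{\mathrm{WP}} + \tfrac{8\pi^{2}\sqrt{-1}}{3}\,\omega_{\mathrm{TZ}}$, which is precisely \eqref{trivial}.

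For $\del\cS = 2\vartheta$ it suffices to show $\del\cS/\del w_k = 2\alpha_k$ for each $k = 1,\dots,n-3$. On one side, the generating-function property of the Liouville action from the proof of Theorem 1 in \cite{TZ87a} gives $\del S/\del w_k = -2\pi c_k$, while Lemma \ref{TZ-del} gives $\del\log H/\del w_k = \sum_{i=1}^{n-1}\dot F^{k}_{w}(w_i) - \dot F^{k}_{w}(\infty)$; hence $\del\cS/\del w_k = -2\pi c_k - \pi\bigl(\sum_{i\ne n}\dot F^{k}_{w}(w_i) - \dot F^{k}_{w}(\infty)\bigr)$. On the other side I would rewrite $\alpha_k = -\iint_{\Ga\bk\HH}\mathcal{S}(J)\mu_k\,d^2z$ as a plane integral: using the Schwarzian cocycle identity $\mathcal{S}(J)(z) = -\mathcal{S}(J^{-1})(J(z))J'(z)^2$ and the change of variables $w = J(z)$ (which turns the automorphic pairing of a weight-$4$ form $G(J)J'^2$ against $\mu_k$ into $\iint_X G\,M_k\,d^2w$, with $M_k$ as in \eqref{M}), one gets $\alpha_k = \iint_X \mathcal{S}(J^{-1})(w)M_k(w)\,d^2w$. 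Expanding $\mathcal{S}(J^{-1})$ by \eqref{Schwarz-2} and using the biorthogonality $\iint_X R_l M_k\,d^2w = \delta_{lk}$ to evaluate the cusp-form contribution as $-\pi c_k$ yields
\[
\alpha_k = \sum_{i=1}^{n}\iint_X \mathcal{E}_i(w)M_k(w)\,d^2w - \pi c_k.
\]

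The remaining task is the evaluation of the Eisenstein-type integrals. Starting from the representation \eqref{dot F}, I would compute $\del_w R(w,w_i)$, decompose it in partial fractions in the integration variable, and use the elementary identity
\[
\frac{1}{v(v-1)}\Bigl(\frac{2w_i-1}{v-w_i} + \frac{w_i(w_i-1)}{(v-w_i)^2} + 1\Bigr) = \frac{1}{(v-w_i)^2}
\]
to obtain $-\tfrac1\pi\iint_\C M_k(w)(w-w_i)^{-2}\,d^2w = \dot F^{k}_{w}(w_i) - \dot F^{k}_{w}(\infty)$, together with $\iint_X\mathcal{E}_n M_k\,d^2w = \tfrac\pi2\dot F^{k}_{w}(\infty)$ coming from $\mathcal{E}_n(w) = \tfrac{1}{2w(w-1)}$. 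Substituting these into the expression for $\alpha_k$, the Eisenstein contributions collapse to $-\tfrac\pi2\bigl(\sum_{i\ne n}\dot F^{k}_{w}(w_i) - \dot F^{k}_{w}(\infty)\bigr)$, so that $2\alpha_k = -2\pi c_k - \pi\bigl(\sum_{i\ne n}\dot F^{k}_{w}(w_i) - \dot F^{k}_{w}(\infty)\bigr) = \del\cS/\del w_k$, and summing over $k$ gives $\del\cS = 2\vartheta$.

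The main obstacle is this last reduction. One must justify the change of variables $w = J(z)$ on the non-compact quotient and the principal-value convergence of $\iint_\C M_k(w)(w-w_i)^{-2}\,d^2w$ near the punctures and at infinity, controlling $M_k$ via the asymptotics \eqref{as-1}--\eqref{as-2} and the bound $M_k(w) = O(\log^2|w|/|w|)$ as $w\to\infty$. This is exactly the analytic content of the proof of Theorem 1 in \cite{TZ87a}; the new ingredient is to recognize the boundary terms there as the variations $\del\log h_i/\del w_k$ delivered by Lemma \ref{TZ-del}, so that passing to the combination $\cS = S - \pi\log H$ replaces the bare accessory-parameter $1$-form by the cusp-form projection $2\vartheta$.
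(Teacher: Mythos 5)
Your proposal is correct and follows essentially the same route as the paper: the $(1,1)$-identity is obtained exactly as in the text by combining $\delb\del S=-2\sqrt{-1}\,\omega_{\mathrm{WP}}$ with Proposition \ref{TZ-potential} (the sign of the $h_n$-term cancelling against the minus sign in $\log H$), and the $(1,0)$-identity is the paper's intended combination of $\del S/\del w_k=-2\pi c_k$ from Theorem 1 of \cite{TZ87a} with Lemma \ref{TZ-del}, your key evaluation $2(\mathcal{E}_i,M_k)=-\pi\dot F^k_w(w_i)$ being precisely the computation the paper carries out in the proof of Theorem \ref{L-TZW}(i). You merely supply explicitly the details the paper delegates to \cite{TZ87a} and to the Schottky-case argument, and your partial-fraction identity and the sign bookkeeping at the cusp $w_n=\infty$ both check out.
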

\begin{remark} \label{Quillen} Since both $H$ and $\exp\{S/\pi\}$  are Hermitian metrics in the line bundle $\lambda_{0,n}$ over $\frak{M}_{0,n}$ (see Sects.~\ref{genus 0} and \ref{spheres}), we conclude that $\cS=S-\pi\log H$ determines a function on $\frak{M}_{0,n}$. 
The combination $\omega_{\mathrm{WP}}-\dfrac{4\pi^{2}}{3}\omega_{\mathrm{TZ}}$, with the overall factor $1/12\pi$, appears in the local index theorem for families on punctured Riemann surfaces for $k=0,1$ (see \cite[Theorem 1]{TZ91}). Equation  \eqref{trivial} agrees with the fact that the analog of the Hodge line bundle $\lambda_{1}$  over $\frak{M}_{0,n}$ is trivial. The function $\cS$ plays the role of the Quillen metric in $\lambda_{1}$, defined in \cite{TZ91}.
\end{remark}

\subsection{Chern forms and potential on $\frak{S}_{g,n}$} \label{SS} As in Sect.~\ref{S space}, let $X=\Sigma\bk\Omega$ be a compact Riemann surface of genus $g$ with $n$ marked points $x_{1},\dots,x_{n}$, let $\Gamma$ be a Fuchsian group of type $(g,n)$ such that $X_{0}=X\setminus\{x_{1},\dots,x_{n}\}\cong\Gamma\bk\HH$, and let $J:\HH^{\ast}\rightarrow\Omega$ be the corresponding branched covering map. Similar to the previous section, denote by $R$ the projection of the automorphic form $\mathcal{S}(J^{-1})$ of weight $4$ for $\Sigma$ to the subspace $\cH^{2,0}(\Omega_{0},\Sigma)\cong T_{0}^{\ast}\mathfrak{S}_{g,n}$. Using pairing \eqref{pair}, we get
\begin{equation*}
R(w)=\sum_{j=1}^{3g-3+n}\beta_{j}P_{j}(w),\quad\text{where}\quad\beta_{j}=(\mathcal{S}(J^{-1}), M_{j}).
\end{equation*}
Corresponding automorphic forms over each point $(\Sigma^{\mu},w_{1}^{\mu},\dots,w_{n}^{\mu})$ determine a $(1,0)$-form $\cR$ on $\mathfrak{S}_{g,n}$.

Using \eqref{S-J Schottky} we have
$$R(w)=\pi R_{0}(w) +\sum_{i=1}^{n}R_{i}(w),$$
where
$$R_{0}(w)=-\sum_{j=1}^{3g-3+n}c_{j}P_{j}(w),\quad R_{i}(w)=\sum_{j=1}^{3g-3+n}(\cE_{i},M_{j})P_{j}(w).$$

In the next theorem, using identification of cotangent spaces to $\mathfrak{S}_{g,n}$ at each point $(\Sigma^{\mu},w_{1}^{\mu},\dots,w_{n}^{\mu})$  with $\cH^{2,0}(\Omega_{0}^{\mu},\Sigma^{\mu})$ (see Sect.~\ref{S space}), we explicitly describe canonical connections on the Hermitian line bundles $\cL_{i}$ and $\cL$.

\begin{theorem} \label{L-TZW} Let $\del$ and $\delb$ be $(1,0)$ and $(0,1)$ components of de Rham differential on $\mathfrak{S}_{g,n}$. The following statement holds.
\begin{itemize}
\item[(i)] In a local holomorphic frame canonical connection on the Hermitian line bundle $(\cL_{i},h_{i})$ is given by
$$h^{-1}_{i}\del h_{i}=-\frac{2}{\pi}R_{i},\quad i=1,\dots,n.$$
\item[(ii)] In a local holomorphic frame canonical connection on the Hermitian line bundle $(\cL,\exp\{S/\pi\})$ is given by
$$\frac{1}{\pi}\del S=2R_{0}.$$
\item[(iii)] The function $\cS: \mathfrak{S}_{g,n}\rightarrow\R$ given by \eqref{L-Schottky} satisfies
\end{itemize}
$$\del\cS=2\cR.$$
\end{theorem}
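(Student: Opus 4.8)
The plan is to prove (i) and (ii) directly and then deduce (iii) formally. Granting (i) and (ii), and recalling from the discussion preceding the theorem that $R=\pi R_{0}+\sum_{i=1}^{n}R_{i}$ and that $\cR$ is the $(1,0)$-form on $\mathfrak{S}_{g,n}$ determined by the family of automorphic forms $R$, one computes
\begin{align*}
\del\cS&=\del S-\pi\sum_{i=1}^{n}\del\log h_{i}=2\pi R_{0}-\pi\sum_{i=1}^{n}h_{i}^{-1}\del h_{i}\\
&=2\pi R_{0}+2\sum_{i=1}^{n}R_{i}=2R,
\end{align*}
which is exactly $\del\cS=2\cR$. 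So the theorem reduces to (i) and (ii).

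For (i) I would first prove the Schottky analogue of Lemma~\ref{TZ-del}: the derivative of $\log h_{i}$ in the tangent direction $M_{k}$ equals $\dot{F}^{k}_{w}(w_{i})$, where $\dot{F}^{k}=\dot{F}^{M_{k}}$. The argument copies that of Lemma~\ref{TZ-del}: differentiating the expression for $\log h_{i}$ in Remark~\ref{h-formula} under the limit $w\to w_{i}$ along the deformation $\vep M_{k}$, one uses that $F^{\vep M_{k}}$ is holomorphic in $\vep$, the vanishing of the first variation \eqref{A2-formula}, the asymptotics \eqref{as-1}, and the limit \eqref{F-limits-1}; the only change from genus~$0$ is that now every $w_{i}$ is finite, so \eqref{as-1} is applied at all cusps. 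The second and more substantive step identifies $\dot{F}^{k}_{w}(w_{i})$ with $-\tfrac{2}{\pi}(\cE_{i},M_{k})$. Folding the integral \eqref{dot F} for $\dot{F}^{M_{k}}$ into the fundamental domain $D$ --- the $\Sigma$-automorphy of $M_{k}$ turning $|\si'(v)|^{2}$ into $\si'(v)^{2}$ --- gives $\dot{F}^{k}_{w}(w_{i})=(\del_{w}G_{w}\big|_{w=w_{i}},M_{k})$ with $G_{w}(v)=-\tfrac{1}{\pi}\sum_{\si\in\Sigma}R(\si v,w)\si'(v)^{2}$. A partial-fraction computation in the variable $\zeta=\si v$ yields
$$\del_{w}R(\si v,w)\big|_{w=w_{i}}=\frac{1}{(\si v-w_{i})^{2}}-\frac{1}{\si v(\si v-1)},$$
so that $\del_{w}G_{w}\big|_{w=w_{i}}=-\tfrac{2}{\pi}\cE_{i}$ by the definition \eqref{E-series-S} of $\cE_{i}$. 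Pairing with $M_{k}$ gives $\dot{F}^{k}_{w}(w_{i})=-\tfrac{2}{\pi}(\cE_{i},M_{k})$; since the $P_{j}$ represent the coordinate differentials dual to the $M_{k}$, summing over $k$ produces $h_{i}^{-1}\del h_{i}=-\tfrac{2}{\pi}\sum_{j}(\cE_{i},M_{j})P_{j}=-\tfrac{2}{\pi}R_{i}$.

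For (ii), the claim is the type $(g,n)$ extension of the accessory-parameter formula $\del S=-2\pi\sum_{l=1}^{3g-3}c_{l}P_{l}$ on $\mathfrak{S}_{g}$ (\cite[Theorems 1,2]{TZ87b}), now for the regularized action \eqref{e:S-reg} and with the sum over all $3g-3+n$ directions. I would vary the regularized integral defined by \eqref{L-S}, \eqref{e:bulk}, \eqref{e:S-reg}, reducing the bulk variation to boundary terms via the Liouville equation \eqref{Liov} and the first-variation formula for the hyperbolic metric, as in \cite{TZ87a,TZ87b,TT03}. The resulting boundary terms split into those on $\del D$, cancelled against the variation of the curve integrals $\int_{C_{k}}\theta_{L_{k}^{-1}}(\varphi)$ by the cocycle property and \eqref{phi-transform}, and those on the regularizing circles $\del D_{i}(\delta)$, which in the limit $\delta\to0$ --- after cancellation against the counterterms $2\pi n(\log\delta+2\log|\log\delta|)$ and via \eqref{as-1} --- supply the accessory-parameter contributions in the puncture directions. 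Combining with the Schwarzian identity \eqref{S-J Schottky}, which exhibits $-\pi\sum_{l}c_{l}P_{l}$ as the holomorphic projection of $\mathcal{S}(J^{-1})$, gives $\del S=-2\pi\sum_{l=1}^{3g-3+n}c_{l}P_{l}=2\pi R_{0}$, i.e.\ $\tfrac{1}{\pi}\del S=2R_{0}$.

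I expect (ii) to be the main obstacle, specifically the bookkeeping of the contributions from the regularizing circles $\del D_{i}(\delta)$: one must verify that their divergent parts exactly absorb the $\log\delta$ and $\log|\log\delta|$ counterterms of \eqref{e:bulk} and that the finite remainder assembles into the puncture-direction accessory parameters. This is where the genus~$0$ regularization of \cite{TZ87a} must be merged with the Schottky boundary analysis of \cite{TZ87b}; once it is in place, the identification with $R_{0}$ through \eqref{S-J Schottky} and the deduction of (iii) are purely formal.
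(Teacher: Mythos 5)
Your proposal follows essentially the same route as the paper: part (i) via the Schottky analogue of Lemma~\ref{TZ-del} (first variation of $\log h_{i}$ equals $\dot{F}^{k}_{w}(w_{i})$) followed by unfolding the integral \eqref{dot F} against \eqref{E-series-S}, part (ii) by varying the regularized action with the boundary contributions from the circles $\del D_{j}(\delta)$ supplying exactly the $\dot{F}^{i}_{w}(w_{j})$ terms that combine with the projection $\beta_{i}$ of $\mathcal{S}(J^{-1})$ via \eqref{S-J Schottky} to give $-2\pi c_{i}$, and part (iii) formally from (i) and (ii). The approach and the key identities (including $\del_{w}R(v,w)\vert_{w=w_{i}}=\tfrac{1}{(v-w_{i})^{2}}-\tfrac{1}{v(v-1)}$) match the paper's argument, so the proposal is correct and essentially the same proof.
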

\begin{proof} To prove part (i), it is sufficient to show that
$$\left.\left(\frac{\del \log h^{\vep\mu_{i}}_{j}}{\del\vep}\right)\right|_{\vep=0}=-\frac{2}{\pi}(\cE_{j},M_{i}).$$
Repeating verbatim computation in the proof of Lemma \ref{TZ-del} we get
$$\left.\left(\frac{\del\log h^{\vep\mu_{i}}_{j}}{\del\vep}\right)\right|_{\vep=0}=\dot{F}^{i}_{w}(w_{j}).$$
Now using \eqref{dot F} and \eqref{E-series-S} we obtain
$$\pi\dot{F}^{i}_{w}(w_{j})=-\iint_{\C}M_{i}(w)\left(\frac{1}{(w-w_{j})^{2}}-\frac{1}{w(w-1)}\right)d^{2}w=-2(\cE_{j},M_{i}),$$
and the result follows.

To prove part (ii), it is sufficient to show that
$$\left.\frac{\del}{\del\vep}\right|_{\vep=0}S(\Sigma^{\vep\mu_{i}}; w_{1}^{\vep\mu_{i}},\dots,w_{n}^{\vep\mu_{i}})=-2\pi c_{i},\quad i=1,\dots,3g-3+n.$$
We have
$$\cI=\left.\frac{\del}{\del\vep}\right|_{\vep=0}S(\Sigma^{\vep\mu_{i}}; w_{1}^{\vep\mu_{i}},\dots,w_{n}^{\vep\mu_{i}})=\ihalf\lim_{\delta\rightarrow 0}\left.\frac{\del}{\del\vep}\right|_{\vep=0}I_{\delta}(\vep)$$
and
\begin{gather*}
I_{\delta}(\vep)
=\iint_{D^{\vep\mu_{i}}_{\delta}}\omega(\varphi^{\vep\mu_{i}}) +\sum_{k=2}^{g}\int_{C^{\vep\mu_{i}}_{k}}\theta_{(L^{\vep\mu_{i}}_{k})^{-1}}(\varphi^{\vep\mu_{i}}).
\end{gather*}

The calculation of $\cI$ almost verbatim repeats the corresponding computation in the proof of Theorem 1 in \cite{TZ87b}, where regularization at the punctures is treated as in the proof of Theorem 1 in \cite{TZ87a}. Namely, using commutative diagram \eqref{CD-S} and the change of variables $w\mapsto F^{\vep\mu_{i}}(w)$, we get
\begin{gather*}
I_{\delta}(\vep)=\iint_{D_{\delta}(\vep)}(F^{\vep\mu_{i}})^{\ast}(\omega(\varphi^{\vep\mu_{i}})) +\sum_{k=2}^{g}\int_{C_{k}}(F^{\vep\mu_{i}})^{\ast}(\theta_{(L_{k}^{\vep\mu_{i}})^{-1}}(\varphi^{\vep\mu_{i}})),
\end{gather*}
where
$$D_{\delta}(\vep)=D\setminus\cup_{j=1}^{n}\left\{w\in D\,|\, |F^{\vep\mu_{i}}(w)-F^{\vep\mu_{i}}(w_{j})|<\delta\right\}.$$

To compute $\del I_{\delta}(\vep)/\del\vep|_{\vep=0}$, we need to differentiate under the integral sign as well as over the variable integration domain $D_{\delta}(\vep)$. The first computation repeats verbatim the one in \cite[Theorem 1]{TZ87b}, with the only change that now integration goes over $D_{\delta}$ and
$\del D_{\delta}$ instead of $D$ and $\del D$ as in \cite{TZ87b}. For the second contribution we use an elementary formula for differentiating a given $2$-form $\Omega$ over a smooth family of variable domains $\mathcal{D}(\vep)$,
$$\left.\frac{\del}{\del\vep}\right|_{\vep=0}\iint_{\mathcal{D}(\vep)}\Omega=\int_{\del \mathcal{D}}i_{V}(\Omega),$$
where $V$ is a vector field along $\del \mathcal{D}$ corresponding to the family of curves $\del\mathcal{D}(\vep)$. In our case we readily obtain
$$\left.\frac{\del}{\del\vep}\right|_{\vep=0}\iint_{D_{\delta}(\vep)}\omega=-\sum_{j=1}^{n}\int_{\del D_{j}(\delta)}|\varphi_{w}|^{2}\left(\dot{F}^{i}(w)-\dot{F}^{i}(w_{j})\right)d\bar{w},$$
where $\del D_{j}(\delta)$ are oriented as a boundary of  $D_{j}(\delta)$ (which is opposite to the orientation from $\del D_{\delta}$).

Thus as in \cite{TZ87b} we get
\begin{gather*}
\left.\frac{\del I_{\delta}(\vep)}{\del\vep}\right |_{\vep=0}=2\iint_{D_{\delta}}\mathcal{S}(J^{-1})(w)M_{i}(w) dw\wedge d\bar{w}\\ -\sum_{j=1}^{n}\int_{\del D_{j}(\delta)}|\varphi_{w}|^{2}\left(\dot{F}^{i}(w)-\dot{F}^{i}(w_{j})\right)d\bar{w} + I_{1}+ I_{2}+I_{3},
\end{gather*}
where
$$I_{1}=-2\sum_{j=1}^{n}\int_{\del D_{j}(\delta)}\varphi_{w}\dot{F}^{i}_{\bar{w}}d\bar{w},\quad I_{2}=-\sum_{j=1}^{n}\int_{\del D_{j}(\delta)}\varphi_{w}\dot{F}^{i}_{w}dw,$$
$$I_{3}=-\sum_{j=1}^{n}\int_{\del D_{j}(\delta)}\varphi_{\bar{w}}\dot{F}^{i}_{w}d\bar{w}.$$
As in the proof of Theorem 1 in \cite{TZ87a}, we obtain that $I_{1}, I_{2}$ and $I_{3}$ are $o(1)$ as $\delta\rightarrow 0$.
Also,
\begin{gather*}
\lim_{\delta\rightarrow 0}\iint_{D_{\delta}}\mathcal{S}(J^{-1})(w)M_{i}(w) dw\wedge d\bar{w}=-2\sqrt{-1}(\mathcal{S}(J^{-1}),M_{i})=-2\sqrt{-1}\beta_{i},
\end{gather*}
and it follows from asymptotic behavior \eqref{as-1} that
\begin{gather*}
\lim_{\delta\rightarrow 0}\sum_{j=1}^{n}\int_{\del D_{j}(\delta)}|\varphi_{w}|^{2}(\dot{F}^{i}(w)-\dot{F}^{i}(w_{j}))d\bar{w}\\
=\lim_{\delta\rightarrow 0}\sum_{j=1}^{n}\int_{\del D_{j}(\delta)}\left(\frac{ \dot{F}^{i}(w)-\dot{F}^{i}(w_j)}{\left|w-w_j\right|^2}+
\frac{2(\dot{F}^{i}(w)-\dot{F}^{i}(w_i)) }{|w-w_j|^2\log|w-w_j|}\right) d\bar{w}\\
=2\pi\sqrt{-1}\sum_{j=1}^{n}\dot{F}^{i}_{w}(w_{j}).
\end{gather*}
Thus we have
$$\cI=2\beta_{i}+\pi\sum_{j=1}^{n}\dot{F}^{i}_{w}(w_{j})=-2\pi c_{i}.$$

Part (iii) immediately follows from (i) and (ii).
\end{proof}
\begin{remark} One can also restate the proof using cohomological methods developed in \cite{TT03}.
\end{remark}
\begin{theorem} \label{S-TZWP} The following statements hold.
\begin{itemize}
\item[(i)] The first Chern form of the Hermitian line bundle $(\cL_{i},h_{i})$ is given by
$$c_{1}(\cL_{i},h_{i})=\frac{4}{3}\omega_{\mathrm{TZ},i},\quad i=1,\dots,n.$$
\item[(ii)] The first Chern form of the Hermitian line bundle $(\cL,\exp\{S/\pi\})$ is given by
$$c_{1}(\cL,\exp\{S/\pi\})=\frac{1}{\pi^{2}}\omega_{\mathrm{WP}}.$$
\item[(iii)] The function $\cS$ given by \eqref{L-Schottky} satisfies
$$\delb\del \cS=-2\sqrt{-1}\left(\omega_{\mathrm{WP}}-\frac{4\pi^{2}}{3}\omega_{\mathrm{TZ}}\right),$$
i.e., $-\cS$ is a potential for this special combination of WP and TZ metrics.
\end{itemize}
\end{theorem}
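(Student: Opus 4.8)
The plan is to deduce all three statements from the explicit canonical connections already obtained in Theorem \ref{L-TZW}, using that the first Chern form of a Hermitian holomorphic line bundle $(\cL',h')$ with connection $(1,0)$-form $\del\log h'$ equals $\frac{\sqrt{-1}}{2\pi}\delb\del\log h'$. Thus (i) and (ii) amount to computing the curvatures $\delb\del\log h_i$ and $\delb\del S$ by applying $\delb$ to the forms $-\frac{2}{\pi}R_i$ and $2\pi R_0$, and (iii) is a formal linear combination of the two.

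For (i), fix a base surface $X=\Sigma\bk\Omega$ and use $T_{g,n}\simeq T(\Gamma)$, so that the curvature is read off from the second variation $(\del^2/\del\vep\,\del\bar\vep)|_{\vep=0}\log h_i^{\vep\mu}$ with $\mu=\mu_i$. I would compute this exactly as in the proof of Proposition \ref{TZ-potential}: begin from the expression for $\log h_i$ in Remark \ref{h-formula}, differentiate under the limit $w\to w_i$, and substitute the variation formulas \eqref{A2-formula}--\eqref{W3-formula} together with the asymptotics \eqref{as-1}. Since \eqref{as-1} and \eqref{A2-formula}--\eqref{W3-formula} take the identical form on $\mathfrak{S}_{g,n}$, the local analysis near the puncture $w_i$ is the same as in genus zero, and the surviving boundary term is identified with the TZ metric through \eqref{t-z}, giving $(\del^2/\del\vep\,\del\bar\vep)|_{\vep=0}\log h_i^{\vep\mu}=-\frac{4\pi}{3}\Vert\mu\Vert^2_{\mathrm{TZ},i}$, i.e.\ $\delb\del\log h_i=-\frac{8\pi\sqrt{-1}}{3}\omega_{\mathrm{TZ},i}$; multiplying by $\frac{\sqrt{-1}}{2\pi}$ yields $c_1(\cL_i,h_i)=\frac{4}{3}\omega_{\mathrm{TZ},i}$. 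Unlike Proposition \ref{TZ-potential}, here $\infty\notin\Omega$ and every $h_i=|a_i(1)|^2$ has the single Fourier form \eqref{F-J}, so all $n$ signs coincide.

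For (ii), Theorem \ref{L-TZW}(ii) gives $\frac{1}{\pi}\del S=2R_0$ with $R_0=-\sum_l c_l P_l$ the generating form for the accessory parameters, so it remains to show $\delb\del S=-2\sqrt{-1}\omega_{\mathrm{WP}}$, i.e.\ that $-S$ is a K\"ahler potential for WP on $\mathfrak{S}_{g,n}$. This is the $(g,n)$-analog of \cite[Theorem 2]{TZ87b}, and I would prove it as the second-variation continuation of the first-variation computation already carried out for Theorem \ref{L-TZW}(ii), combining the compact Schottky argument of \cite{TZ87b} with the puncture regularization of \cite{TZ87a}; the $\delb$-derivatives of the boundary contributions near the punctures again vanish in the limit, by the same estimates that made $I_1,I_2,I_3=o(1)$ there. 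Multiplying $\delb\del(S/\pi)=-\frac{2\sqrt{-1}}{\pi}\omega_{\mathrm{WP}}$ by $\frac{\sqrt{-1}}{2\pi}$ gives $c_1(\cL,\exp\{S/\pi\})=\frac{1}{\pi^2}\omega_{\mathrm{WP}}$. Part (iii) is then immediate: writing $\cS=S-\pi\sum_{i=1}^n\log h_i$ as in \eqref{L-Schottky} and applying $\delb\del$ termwise,
$$\delb\del\cS=-2\sqrt{-1}\omega_{\mathrm{WP}}+\pi\sum_{i=1}^n\frac{8\pi\sqrt{-1}}{3}\omega_{\mathrm{TZ},i}=-2\sqrt{-1}\left(\omega_{\mathrm{WP}}-\frac{4\pi^2}{3}\omega_{\mathrm{TZ}}\right),$$
so $-\cS$ is a potential for the stated combination.

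The main obstacle is the second-variation analysis underlying (i) and (ii): justifying the interchange of the limit toward each puncture with $\vep$-differentiation, and controlling the regularized boundary integrals so that the divergent and cross terms drop out, leaving precisely the TZ boundary limit \eqref{t-z} in (i) and the WP form in (ii). These local estimates mirror those already used for Proposition \ref{TZ-potential} and Theorem \ref{L-TZW}; once they are in place the identifications and the final linear combination are routine.
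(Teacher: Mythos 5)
Your proposal follows essentially the same route as the paper: part (i) is obtained by repeating the second-variation computation of Proposition \ref{TZ-potential} (with the correct observation that all $n$ punctures now behave like the finite cusps, so the signs agree), part (ii) combines Theorem \ref{L-TZW}(ii) with the computation of \cite[Theorem 2]{TZ87b} adapted to the puncture regularization, and part (iii) is the formal linear combination. The sign bookkeeping and the final identity for $\delb\del\cS$ are correct.
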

\begin{proof} Since
$$c_{1}(\cL_{i},h_{i})=\frac{\sqrt{-1}}{2\pi}\delb\del\log h_{i},$$
the proof of part (i) is exactly the same as that of Proposition \ref{TZ-potential}. Using part (ii) of Theorem \ref{L-TZW}, we obtain the proof of part (ii)  by repeating, line by line, the computation in \cite[Theorem 2]{TZ87b}. Part (iii) immediately follows from (i) and (ii).
\end{proof}
\begin{remark} As in case of the moduli space $\frak{M}_{0,n}$ (see Remark \ref{Quillen}), the combination $\omega_{\mathrm{WP}}-\dfrac{4\pi^{2}}{3}\omega_{\mathrm{TZ}}$, with the overall factor $1/12\pi$, appears in the local index theorem for families on punctured Riemann surfaces for $k=0,1$ (see \cite[Theorem 1]{TZ91}). Part (iii) of Theorem \ref{S-TZWP} agrees with the fact that the Hodge line bundle $\lambda_{1}$ is holomorphically trivial over $\mathfrak{S}_{g,n}$. 
It would be interesting to relate the function $\cS$ with the Quillen metric in $\lambda_{1}$, defined in \cite{TZ91} (see \cite[\S3]{Z89}).
\end{remark}
\begin{remark} Let $\cM_{g,n}$ be the moduli space of $n$-pointed algebraic curves of genus $g$. The Hermitian metrics $h_{i}$ in the line bundles $\cL_{i}$ provide explicit expressions for the pullbacks of the Hermitian metrics in tautological line bundles over $\cM_{g,n}$, introduced in \cite{Weng01,W07}.
\end{remark}

\section{Generalization to quasi-Fuchsian deformation spaces}\label{s:q-Fuchsian}
Here we define the Liouville action functional on the quasi-Fuchsian deformation spaces of punctured Riemann surfaces and prove that it is a \Ka potential for the Weil-Petersson metric. The construction follows very closely our work \cite{TT03} for compact Riemann surfaces, so here we just highlight the necessary modifications and refer to  \cite{TT03} for the details. For the convenience of the reader here we are using the same notations as in \cite{TT03}.

Let $\Gamma$ be a marked, normalized, quasi-Fuchsian group of type $(g,n)$ such that $3g-3+n>0$. Its region of discontinuity $\Omega$ has two invariant components $\Omega_1$ and $\Omega_2$ separated by a quasi-circle $\mathcal{C}$. There exists a quasiconformal homeomorphism $J_1$ of $\hat{\mathbb{C}}$ such that
\begin{enumerate}
\item[\textbf{QF1}]$J_1$ is holomorphic on $\mathbb{U}$ and $J_1(\mathbb{U})=\Omega_1$, $J_1(\mathbb{L})=\Omega_2$, $J_1(\mathbb{R})=\mathcal{C}$, where $\mathbb{U}$ and
$\mathbb{L}$ are, respectively, upper and lower half-planes.
\item[\textbf{QF2}] $J_1$ fixes $0, 1$ and $\infty$.
\item[\textbf{QF3}] $ \Gamma_1=J_1^{-1}\circ \Gamma\circ J_1$ is a marked, normalized Fuchsian group.
\end{enumerate}

Let $X\simeq\Gamma\backslash\Omega_1$ and $Y\simeq \Gamma\backslash\Omega_2$ be
corresponding marked punctured Riemann surface of type $(g,n)$ with opposite orientations.
There is also a quasiconformal homeomorphism $J_2$ of $\hat{\mathbb{C}}$, holomorphic on $\mathbb{L}$ with a Fuchsian group
${\Gamma}_2= J_2^{-1}\circ \Gamma\circ J_2$ so that $X\simeq \Gamma_1\backslash\mathbb{U}$ and $Y\simeq {\Gamma}_2\backslash \mathbb{L}$.
The hyperbolic metric $e^{\phi_{\mathrm{hyp}}(w)}|dw|^2$ on $\Omega=\Omega_1\sqcup\Omega_2$ is explicitly given by
\begin{align}\label{e:metric}
e^{\phi_{\mathrm{hyp}}(w)}= \frac{|(J_i^{-1})_w(w)|^2}{| \im (J^{-1}_i(w))|^2} \quad \text{if} \quad w\in \Omega_i, \ \ i=1,2,
\end{align}
and is a pull-back by the map $J^{-1}:\Omega_1\sqcup\Omega_2\rightarrow\mathbb{U}\sqcup\mathbb{L} $ of the hyperbolic metric on $\mathbb{U}\sqcup\mathbb{L}$, where $J|_{\mathbb{U}}=J_{1}|_{\mathbb{U}}$ and
$J|_{\mathbb{L}}=J_{2}|_{\mathbb{L}}$.

Denote by $\frak{D}(\Gamma)$ the deformation space of the quasi-Fuchsian group $\Gamma$. It is a complex manifold of complex dimension $6g-6+2n$ with the Weil-Petersson \Ka form
(see \cite[Sect. 3]{TT03} and references therein). As in \cite{TT03}, we define the smooth function $S:\mathfrak{D}(\Gamma)\rightarrow\R$, the critical value of the Liouville action functional, using homology and cohomology double complexes associated with the $\Gamma$-action on $\Omega$.
\subsection{Homology construction}

Start with marked normalized Fuchsian group $\Gamma$ of type $(g,n)$ with $2g$
 hyperbolic generators $\alpha_1,\dots,\alpha_g, \beta_1,\dots ,
\beta_g$ and $n$ parabolic generators $\lambda_1, \ldots,
\lambda_n$ satisfying the single relation
\begin{equation*}
\gamma_1\cdots\gamma_g\lambda_1\cdots\lambda_n=\id,
\end{equation*}
where
$\gamma_k=[\alpha_k,\beta_k]=\alpha_k\beta_k\alpha_k^{-1}\beta_k^{-1}$.
Here the attracting and
repelling fixed points of $\alpha_1$ are, respectively, $0$ and
$\infty$, and the attracting fixed point of $\beta_1$ is $1$.

The double homology complex $\mathsf{K}_{\bullet,\bullet}$ is defined as $\SSS_{\bullet}\otimes_{\Z\Gamma}\BBB_{\bullet}$, a tensor product over the integral group ring $\Z\Gamma$, where $\SSS_{\bullet}=\SSS_{\bullet}(\mathbb{U})$ is the singular chain
complex of $\mathbb{U}$ with the differential $\del'$, considered as a right $\Z\Gamma$-module, and $\BBB_{\bullet}=\BBB_{\bullet}(\Z\Gamma)$ is the standard bar resolution complex for $\Gamma$ with the differential $\del''$.
The associated total complex $\mathrm{Tot}\,\KKK$ is equipped with the total differential $\del=\del'+(-1)^{p}\del''$ on  $\mathsf{K}_{p,q}$.

The analog of the total $2$-cycle
that represents the fundamental class of the compact Riemann surface in \cite[Sect. 2.2.1]{TT03} is the following $2$-chain
$$\Sigma=F+L-V,$$
satisfying
\begin{equation} \label{cycle}
\del\Sigma =-\sum_{i=1}^n z_i\otimes[\lambda_i],
\end{equation}
where $z_{i}\in\R$ are fixed points of the parabolic generators $\lambda_{i}$.
\begin{remark} Note that $\Sigma-\bar\Sigma$, where $\bar\Sigma=\bar{F}+\bar{L}-\bar{V}$, is a total $2$-cycle in the double complex associated with $\mathbb{U}\sqcup\mathbb{L}$,
$$\del(\Sigma-\bar\Sigma)=0.$$

\end{remark}
Here the elements $F\simeq F\otimes \left[\;\right]\in \KKK_{2,0}$, $L\in\KKK_{1,1}$ and $V\in\KKK_{0,2}$ are defined as follows.
The element $F$ is a standard  fundamental domain for $\Gamma$ in  $\mathbb{U}$ --- a closed non-Euclidean polygon with
$4g+2n$ edges labeled by $a_k$, $a_k'$, $b_k'$, $b_k$, $k=1,\dots,g$, and
$c_i$, $c_i'$, $i=1,\dots, n$, satisfying
$ \alpha_k(a_k')=a_k$, $\beta_k(b_k')=b_k$ and
$\lambda_{i}(c_i')=c_{i}$. The orientation of the edges
is  such that
\begin{equation*}
\pa' F=\sum_{k=1}^{g}(a_k+b_k'-a_k'-b_k)+\sum_{i=1}^{n}(c_{i}-c_{i}').
\end{equation*}
Set $\pa' a_k=a_k(1)-a_k(0)$, $\pa' b_k=b_k(1)-b_k(0)$,
$\pa' c_i=c_i(1)-c_i(0)$, so that $a_k(0)=b_{k-1}(0)$, $k=2,\dots, g$, $a_1(0)=c_n'(0) $, $c_i(0)=c_{i-1}'(0)$,
$i=2,\dots,n $, $c_1(0)= b_g(0)$. The elements $L\in\KKK_{1,1}$ and $V\in\KKK_{0,2}$ are given by
\begin{equation} \label{L}
L = \sum_{k=1}^g (b_k\otimes\left[\beta_k\right] -a_k\otimes\left[\alpha_k\right])-\sum_{i=1}^n
c_i\otimes\left[\lambda_i\right]
\end{equation}
and
\begin{equation} \label{V}
\begin{split}
V &= \sum_{k=1}^g \left(a_k(0)\otimes\left[\alpha_k|\beta_k\right] -
b_k(0)\otimes\left[\beta_k|\alpha_k\right] +
b_k(0)\otimes\left[\gamma_k^{-1}|\alpha_k\beta_k\right]\right)\\ &
-\sum_{k=1}^{g-1}
b_g(0)\otimes\left[\gamma_g^{-1}\ldots\gamma_{k+1}^{-1}|\gamma_k^{-1}\right]+\sum_{i=1}^{n-1}c_1(0)\otimes\left[\lambda_1
\cdots\lambda_i|\lambda_{i+1}\right]. \end{split}
\end{equation}
Finally let $P_k$ be $\Gamma$-contracting paths in $\mathbb{U}$ connecting 0 to $b_k(0)$ (see \cite[Definition 2.3]{TT03}),
and let
\begin{equation} \label{W}\begin{split}
W &= \sum_{k=1}^g \left(P_{k-1}\otimes\left[\alpha_k|\beta_k\right] -
P_k\otimes\left[\beta_k|\alpha_k\right] +
P_k\otimes\left[\gamma_k^{-1}|\alpha_k\beta_k\right]\right)\\ &
-\sum_{k=1}^{g-1}
P_g\otimes\left[\gamma_g^{-1}\cdots\gamma_{k+1}^{-1}|\gamma_k^{-1}\right]+\sum_{i=1}^{n-1}P_g\otimes\left[\lambda_1
\cdots\lambda_i|\lambda_{i+1}\right]. \nonumber
\end{split}\end{equation}

Now let $\Gamma$ be a marked, normalized, quasi-Fuchsian group of type $(g,n)$ with the $2g$ loxodromic generators $\alpha_1,\dots,\alpha_g, \beta_1,\dots ,
\beta_g$ and $n$ parabolic generators $\lambda_1, \ldots,\lambda_n$, and let $\Gamma_{1}$ be the Fuchsian group such that $\Gamma_1=J_{1}^{-1} \circ \Gamma \circ J_{1}$.
The double complex associated with $\Omega=\Omega_{1}\sqcup\Omega_{2}$ and the group $\Gamma$ is a push-forward by the map $J_{1}$ of the double complex
associated with $\mathbb{U}\sqcup\mathbb{L}$ and the group $\Gamma_{1}$. The corresponding total $2$-cycle for this complex is given by
$$\Sigma_{1}-\Sigma_{2}=J_{1}(\Sigma)-J_{1}(\bar\Sigma)=F_{1}-F_{2}+L_{1}-L_{2}-V_{1}+V_{2},$$
where $F_{1}=J_{1}(F)$,  $F_{2}=J_{1}(\bar{F})$,  $L_{1}=J_{1}(L)$,  $L_{2}=J_{1}(\bar{L})$,  $V_{1}=J_{1}(V)$, $V_{2}=J_{1}(\bar{V})$, and
we continue to denote by $\Sigma-\bar{\Sigma}$ the total $2$-cycle for the  double complex
associated with $\mathbb{U}\sqcup\mathbb{L}$ and the group $\Gamma_{1}$.

\subsection{Cohomology construction} The corresponding double complex in cohomology $\CCC^{\bullet,\bullet}$ is defined as
$\CCC^{p,q}=\Hom_{\C}(\BBB_{q},\AAA^{p})$, where $\AAA^{\bullet}$ is the complexified de Rham complex on $\Omega=\Omega_{1}\sqcup\Omega_{2}$. The associated total complex $\mathrm{Tot}\,\CCC$ is equipped with the total differential $D=d+(-1)^{p}\delta$ on  $\mathsf{C}_{p,q}$, where $d$ is the de Rham differential and $\delta$ is the group coboundary. The natural pairing $\langle\,,\,\rangle$ between
$\CCC^{p,q}$ and $\KKK_{p,q}$ is given by the integration over chains
(see \cite{TT03} for details).

Put $\vphi=\phi_{\mathrm{hyp}}$. As in \cite{TT03}, starting from the $2$-form
\begin{equation*}
\omega[\vphi] = \left(|\vphi_{w}|^2 +e^{\vphi}\right)dw\wedge d\bar{w}\in\CCC^{2,0}
\end{equation*}
(cf.~the corresponding $2$-form in Sect.~\ref{S domains}), one constructs the total $2$-cocycle $\Psi[\vphi]$ and defines the
the Liouville action as
\begin{equation*}
S_{\Gamma}=\frac{i}{2}\langle\Psi[\vphi],\Sigma_{1}-\Sigma_{2}\rangle,
\end{equation*}
provided that integrals over $F_{1}$ and $F_{2}$ exist (as we will show below). Moreover, $S_{\Gamma}$ does not depend on the choice of the fundamental domains $F_{1}$ and $F_{2}$ for $\Gamma$ in $\Omega_{1}$ and $\Omega_{2}$. Simplifying as in \cite[Sect. 2.3.3]{TT03}, we finally obtain
\begin{align}\label{S qf}
S_{\Gamma}=&\frac{i}{2}\left(\langle \omega[\vphi], F_{1}-F_{2}\rangle-\langle\check\theta[\vphi], L_{1}-L_{2}\rangle+\langle \check{u}, W_{1}-W_{2}\rangle\right),
\end{align}
(cf.~\cite[formula (2.27)]{TT03}). Here $W_{1}=J_{1}(W)$, $W_{2}=J_{1}(\bar{W})$, and
\begin{equation} \label{e:check-theta}
\check\theta_{\gamma^{-1}}[\vphi] = \left(\vphi -
\frac{1}{2}\log|\gamma'|^2-2\log 2-\log|c(\gamma)|^2\right) \left(\frac{\gamma''}{\gamma'} dw -
\frac{\ov{\gamma''}}{\ov{\gamma'}}d\bar{w}\right)
\end{equation}
(cf.~the corresponding $1$-form in Sect.~\ref{S domains}) and
\begin{equation} \label{e:check-u}
\begin{split}
\check{u}_{\gamma_1^{-1},\gamma_2^{-1}}= & -\left(\frac{1}{2}\log|\gamma_1'|^2+\log\frac{|c(\gamma_2)|^2}{|c(\gamma_2\gamma_1)|^2}\right)
\left(\frac{\gamma_2''}{\gamma_2'}\circ\gamma_1\, \gamma_1'\, dw -
\frac{\ov{\gamma_2''}}{\ov{\gamma_2'}}\circ\gamma_1\, \ov{\gamma_1'}\,d\bar{w}\right) \\
 & + \left(\frac{1}{2}\log|\gamma_2'\circ\gamma_1|^2+\log\frac{|c(\gamma_2\gamma_1)|^2}{|c(\gamma_1)|^2}\right) \left(\frac{\gamma_1''}{\gamma_1'} dw -
\frac{\ov{\gamma_1''}}{\ov{\gamma_1'}}d\bar{w}\right). \nonumber
\end{split}\end{equation}

Denote by $z_{1i}, z_{2i}\in\R$, $i=1,\dots,n$, the fixed points of the parabolic generators of $\Gamma_{1}$ and $\Gamma_{2}$, and by
$w_{i}=J_{1}(z_{1i})=J_{2}(z_{2i})\in\mathcal{C}$ --- the fixed points of the parabolic generators $\lambda_{i}$ of $\Gamma$.  Let $\sigma_{1i}, \sigma_{2i}\in\mathrm{PSL}(2,\R)$ be such that $\sigma_{1i}\infty=z_{1i}$ and $\sigma_{2i}\infty=z_{2i}$, $i=1,\dots,n$.
\begin{lemma}\label{l:hyp-cusp} Let $e^{\varphi(w)}|dw|^2$ be the hyperbolic metric on  $\Omega=\Omega_{1}\sqcup\Omega_{2}$. Then
\begin{align*}
(\varphi\circ J_{1}\circ \sigma_{1i})(z) & =2\log y +O(1) \quad \text{as} \quad y=\im z\rightarrow \infty,\\
(\varphi\circ J_{2}\circ \sigma_{2i})(z) & = 2\log |y| +O(1) \quad \text{as} \quad y=\im z\rightarrow -\infty.
\end{align*}
\end{lemma}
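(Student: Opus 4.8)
The plan is to prove the first asymptotic, for $J_1$ on $\mathbb{U}$; the second, for $J_2$ on $\mathbb{L}$, then follows by the identical argument with $\mathbb{U}$ and $z\to\sqrt{-1}\,\infty$ replaced by $\mathbb{L}$ and $z\to-\sqrt{-1}\,\infty$, and $y$ by $|y|$. By \textbf{QF1} and \textbf{QF3} the map $J_1\colon\mathbb{U}\to\Omega_1$ is a biholomorphism intertwining the Fuchsian group $\Gamma_1$ with $\Gamma$, and by \eqref{e:metric} it carries the hyperbolic metric $y^{-2}|dz|^2$ of $\mathbb{U}$ to $e^{\varphi}|dw|^2$ on $\Omega_1$; since $\sigma_{1i}\in\PSL(2,\R)$ is a hyperbolic isometry of $\mathbb{U}$, the composite $G=J_1\circ\sigma_{1i}$ pulls $e^{\varphi}|dw|^2$ back to $y^{-2}|dz|^2$. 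I would record this as the identity
$$(\varphi\circ J_1\circ\sigma_{1i})(z)=-2\log|G'(z)|-2\log y,$$
which reduces the whole statement to the behaviour of $\log|G'(z)|$ as $y=\im z\to\infty$.

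Next I would determine the cusp expansion of $G$. Writing $\lambda_{1i}$ for the parabolic generator of $\Gamma_1$ fixing $z_{1i}$, relation \textbf{QF3} gives $J_1\circ\lambda_{1i}=\lambda_i\circ J_1$, while $\sigma_{1i}^{-1}\lambda_{1i}\sigma_{1i}$ is the translation $z\mapsto z+1$; hence $G(z+1)=\lambda_i(G(z))$. Conjugating the parabolic $\lambda_i$, which fixes $w_i$, to a translation $u\mapsto u+\beta_i$ by a M\"obius map $\tau_i$ with $\tau_i(w_i)=\infty$, the function $H_i=\tau_i\circ G$ satisfies $H_i(z+1)=H_i(z)+\beta_i$, so $H_i(z)-\beta_i z$ is $1$-periodic and descends to a holomorphic function of $q=e^{2\pi\sqrt{-1}z}$ on a punctured neighbourhood of $q=0$. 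Since $G$ is univalent and maps a horocyclic region at the cusp biholomorphically onto a cusp neighbourhood of $w_i$ in $\Omega_1$, the transition between the two cusp coordinates $q$ and $\tilde q=\exp(2\pi\sqrt{-1}\,H_i(z)/\beta_i)$ at the $i$-th puncture of $X$ is a biholomorphism of punctured disks, hence extends holomorphically across $q=0$; this forces $H_i(z)-\beta_i z$ to be bounded, so $H_i(z)=\beta_i z+b_0+O(e^{-2\pi y})$ and
$$G(z)=w_i+\frac{1}{\beta_i z}+O(|z|^{-2}),\qquad \log|G'(z)|=-2\log|z|+O(1)$$
as $z\to\sqrt{-1}\,\infty$.

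Combining the two ingredients, substitution into the identity of the first paragraph yields
$$(\varphi\circ J_1\circ\sigma_{1i})(z)=-2\log y+4\log|z|+O(1)=2\log y+O(1),$$
the last step using $\log|z|=\log y+O(1)$ as $z\to\sqrt{-1}\,\infty$ inside the vertical strip $0\le\re z<1$. Restricting to this strip is legitimate because
$$(\varphi\circ G)(z+1)-(\varphi\circ G)(z)=-\log|\lambda_i'(G(z))|^2\longrightarrow 0$$
as $G(z)\to w_i$ and $\lambda_i'(w_i)=1$, so $\varphi\circ G$ is asymptotically $1$-periodic; the asymptotic for $J_2$ is obtained verbatim with the evident sign changes and $\log|y|$ in place of $\log y$.

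I expect the one genuinely nontrivial point to be the cusp expansion of the second paragraph --- specifically, that $H_i-\beta_i z$ has no negative Fourier modes, i.e.\ that the singularity at $q=0$ is removable. This is exactly where the univalence of $J_1$ and the identification of the $i$-th cusp of $X$ with a punctured-disk quotient of a horoball enter; the remaining steps are the bookkeeping of the hyperbolic-metric pullback together with elementary M\"obius estimates for $\im(\sigma_{1i}z)$.
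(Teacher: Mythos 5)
Your proof is correct and follows the same strategy as the paper's: the pullback identity $(\varphi\circ J_{1})(z)=-2\log y-\log|J_{1}'(z)|^{2}$ combined with the cusp expansion $(J_{1}\circ\sigma_{1i})(z)=w_{i}+b_{-1}z^{-1}+O(z^{-2})$, $b_{-1}\neq 0$, which gives $\log|(J_1\circ\sigma_{1i})'(z)|^2=-4\log y+O(1)$ and hence the claim. The only point of divergence is how that expansion is obtained: the paper asserts a Laurent expansion $\tilde{J}_1(z)=a_1z+a_0+a_{-1}z^{-1}+\cdots$ directly from the univalence of $\tilde{J}_{1}=\sigma_{i}^{-1}\circ J_{1}\circ\sigma_{1i}$ and the fact that it "preserves $\infty$," whereas you derive the equivalent statement $H_i(z)=\beta_i z+b_0+O(e^{-2\pi y})$ from the parabolic equivariance $G(z+1)=\lambda_i(G(z))$ together with removability of the singularity of the $q$-expansion at the puncture --- a somewhat more careful route, since $\tilde{J}_1$ is holomorphic only on $\mathbb{U}$ and a genuine Laurent expansion at $\infty$ is not automatic for a univalent map of a half-plane fixing $\infty$; your explicit reduction to the vertical strip via asymptotic periodicity of $\varphi\circ G$ likewise makes precise a step the paper leaves implicit.
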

\begin{proof} It is sufficient to prove the first formula. By definition,
$$(\vphi\circ J_{1})(z)+\log |J'_{1}(z)|^{2}=-2\log y.$$
Let $\sigma_{i}\in\mathrm{PSL}(2,\C)$ be such that $\sigma_{i}(\infty)=w_{i}$.
The map $\tilde{J}_{1}=\sigma_i^{-1}\circ J_{1}\circ \sigma_{1i}$ is univalent and preserves $\infty$, so that in the neighborhood of $\infty$
\begin{align*}
\tilde{J}_{1}(z)= a_1z +a_0 + a_{-1}z^{-1} + a_{-2}z^{-2}+\ldots,\quad\text{where}\quad a_{1}\neq 0.
\end{align*}
Whence
\begin{align*}
(J_{1}\circ \sigma_{1i})(z)= w_i +b_{-1}z^{-1} +b_{-2}z^{-2} + b_{-3}z^{-3}+\ldots,\quad\text{where}\quad b_{-1}\neq 0.
\end{align*}
Thus as $y\rightarrow\infty$ we obtain
\begin{align*}
(\varphi\circ J_{1})(\sigma_{1i}z)& = -\log |J'_1(\sigma_{1i}z)|^2-\log\im(\sigma_{1i}z)^2\\
&=-\log |(J_{1}\circ\sigma_{1i})'(z)|^2-\log y^2\\
&= 4\log y -2\log y +O(1).\qedhere
\end{align*}
\end{proof}
\begin{corollary} The integrals in definition \eqref{S qf} of $S_{\Gamma}$ are convergent.
\end{corollary}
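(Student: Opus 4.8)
The plan is to reduce the convergence question to the cusps and to settle it there with Lemma~\ref{l:hyp-cusp}. Away from the fixed points $w_1,\dots,w_n$ of the parabolic generators the potential $\vphi=\phi_{\mathrm{hyp}}$ is smooth, and the chains $F_1,F_2,L_1,L_2,W_1,W_2$ are compact there, so every integrand in \eqref{S qf} is continuous and integrated over a compact set. As already noted after \eqref{S qf}, the only integrals whose convergence is genuinely in question are the area-type integrals $\iint_{F_1}\omega[\vphi]$ and $\iint_{F_2}\omega[\vphi]$, and it suffices to control them on a small neighborhood of each cusp $w_i$ inside $F_1$ (resp.\ $F_2$).

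First I would pass to the half-plane coordinate in which the cusp sits at $\infty$. Fix $i$ and set $g=J_1\circ\sigma_{1i}$, holomorphic on $\mathbb{U}$ (since $J_1$ is holomorphic there) with $g(\infty)=w_i$, which carries a cusp strip $\Pi_Y=\{\,0\le\re z\le 1,\ \im z\ge Y\,\}$ for $\Gamma_1$ onto the part of $F_1$ near $w_i$. Because $g$ transports the hyperbolic metric of $\mathbb{U}$ to that of $\Omega_1$ by \eqref{e:metric}, writing $\psi(z)=-2\log y$ with $y=\im z$ we have the transformation rule $\vphi\circ g=\psi-\log|g'|^2$, and the proof of Lemma~\ref{l:hyp-cusp} provides the expansion
\begin{equation*}
g(z)=w_i+b_{-1}z^{-1}+O(z^{-2}),\qquad b_{-1}\neq 0,\quad\text{as}\quad z\to\infty .
\end{equation*}

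Next I would transform $\omega[\vphi]=(|\vphi_w|^2+e^{\vphi})\,dw\wedge d\bar w$ under $w=g(z)$. The area part is invariant, $e^{\vphi}\,dw\wedge d\bar w=e^{\psi}\,dz\wedge d\bar z=y^{-2}\,dz\wedge d\bar z$, while from $\vphi_w=(g')^{-1}\bigl(\psi_z-g''/g'\bigr)$ together with $dw\wedge d\bar w=|g'|^2\,dz\wedge d\bar z$ one gets
\begin{equation*}
|\vphi_w|^2\,dw\wedge d\bar w=\left|\psi_z-\frac{g''}{g'}\right|^2 dz\wedge d\bar z .
\end{equation*}
The expansion above gives $g''/g'=-2/z+O(z^{-2})=O(1/y)$ on $\Pi_Y$, and $\psi_z=\sqrt{-1}/y$, so both terms are $O(1/y)$ and the whole integrand of $\omega[\vphi]$ is $O(y^{-2})$ on $\Pi_Y$. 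Since the transverse variable $\re z$ ranges over the bounded interval $[0,1]$,
\begin{equation*}
\iint_{\Pi_Y}\bigl(|\psi_z-g''/g'|^2+y^{-2}\bigr)\,dx\,dy\le C\int_0^1\!\!\int_Y^{\infty}\frac{dy\,dx}{y^2}<\infty .
\end{equation*}
The same argument on $\Omega_2$, with $g$ replaced by $J_2\circ\sigma_{2i}$ and $\Pi_Y$ by a cusp strip in $\mathbb{L}$, controls $\iint_{F_2}\omega[\vphi]$, and summing over $i$ yields convergence.

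The point I expect to matter most is geometric rather than computational: it is the strip shape of the cusp region of the Fuchsian fundamental domain $g^{-1}(F_1)$ that makes the integral finite, whereas in the Schottky case of Sect.~\ref{S domains} the cusp neighborhood is a round disk $\{|w-w_i|<\delta\}$, which pulls back to a full neighborhood $\{|z|>R\}$ of $\infty$ and turns the same $O(y^{-2})$ integrand into the logarithmically divergent $\int_R^{\infty}r^{-1}\,dr$, forcing the regularization in \eqref{e:bulk}. Thus the only real work is the coordinate transformation of the non-invariant term $|\vphi_w|^2\,dw\wedge d\bar w$ together with the elementary bound $g''/g'=O(1/y)$ extracted from Lemma~\ref{l:hyp-cusp}; once these are in place, any contribution along the cusp edges entering $L_1-L_2$ obeys the same $O(1/y)$ asymptotics and is convergent as well.
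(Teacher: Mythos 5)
Your proof is correct and follows essentially the same route as the paper's: both arguments exploit the freedom to choose $F_1,F_2$ as push-forwards of Fuchsian fundamental domains, pull $\omega[\vphi]$ back to a cusp strip in $\mathbb{U}$ (resp.\ $\mathbb{L}$), and use the expansion of $J_1\circ\sigma_{1i}$ at $\infty$ established in the proof of Lemma~\ref{l:hyp-cusp} to conclude integrability. You merely make explicit the $O(y^{-2})$ bound that the paper leaves as ``it immediately follows,'' and your closing remark contrasting the strip geometry with the round disks of the Schottky case correctly identifies why no regularization is needed here.
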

\begin{proof} Since $S_{\Gamma}$ does not depend on the choices of fundamental domains for $\Gamma$ in $\Omega_{1}$ and $\Omega_{2}$, we can choose
$F_{1}$ to be the push-forward by $J_{1}$ of a fundamental domain for $\Gamma_{1}$ in $\mathbb{U}$ and $F_{2}$ --- the push-forward by $J_{2}$ of a fundamental domain
for $\Gamma_{2}$ in $\mathbb{L}$. It immediately follows from Lemma \ref{l:hyp-cusp} that the pullback
$J_{1}^{\ast}(\omega[\vphi])$ is integrable over the fundamental domain for $\Gamma_{1}$ in $\mathbb{U}$, and $J_{2}^{\ast}(\omega[\vphi])$ --- over the fundamental domain
for $\Gamma_{2}$ in $\mathbb{L}$. The line integrals in the definition of $S_{\Gamma}$ converge as well.
\end{proof}
Using formula \eqref{S qf}, we define a function $S:\frak{D}(\Gamma)\rightarrow\R$ by setting $S(\Gamma')=S_{\Gamma'}$ for every $\Gamma'\in\frak{D}(\Gamma)$.

\subsection{Potential for the WP metric on $\frak{D}(\Gamma)$}
Let
\begin{align*}
\vartheta(z)=2\varphi_{zz}-\varphi_z^2
=&\begin{cases} 2\mathcal{S}\left(J_1^{-1}\right)(z),\quad&\text{if}\quad z\in \Omega_1\\
2\mathcal{S}\left(J_2^{-1}\right)(z),\quad&\text{if}\quad z\in \Omega_2.\end{cases}
\end{align*}
It follows from Lemma \ref{l:hyp-cusp} that an automorphic form  $\vartheta$ of weight $4$ for $\Gamma$ vanishes at the cusps $w_{1},\dots, w_{n}$. As in \cite[Sect.~4]{TT03}, the family of automorphic forms $\vartheta$ for every $\Gamma'\in\frak{D}(\Gamma)$ determines a $(1,0)$-form $\bm\vartheta$ on $\frak{D}(\Gamma)$. Denote by $d=\del+\bar\del$ the decomposition of de Rham differential on $\frak{D}(\Gamma)$ into $(1,0)$ and $(0,1)$ components.

The following result is an exact analog of Theorem 4.1 in \cite{TT03}.
\begin{theorem}\label{firstvariation} On $\frak{D}(\Gamma)$,
\begin{align*}
\del S=\bm\vartheta.
\end{align*}
\end{theorem}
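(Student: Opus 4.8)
The plan is to differentiate $S_{\Gamma}$ along a holomorphic tangent vector to $\frak{D}(\Gamma)$ and to identify the answer with the pairing of $\bm\vartheta$ against that vector, following verbatim the cohomological first-variation argument of \cite[Sect.~4]{TT03} and supplying the analytic estimates at the cusps that are new to the punctured case. Concretely, I would represent a tangent vector by a Beltrami differential $\mu$ on $\Omega=\Omega_1\sqcup\Omega_2$, form the family $\Gamma^{\vep\mu}$ with its normalized quasiconformal maps $F^{\vep\mu}$ as in \eqref{CD-S}, and compute $\left.\del_\vep\right|_{\vep=0}S_{\Gamma^{\vep\mu}}$ using the explicit expression \eqref{S qf}. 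Pulling the integrals over $F_1^{\vep\mu},F_2^{\vep\mu}$ and over the $1$- and $0$-cells $L_j^{\vep\mu},W_j^{\vep\mu}$ back to the fixed cells by the change of variables $w\mapsto F^{\vep\mu}(w)$ turns $S_{\Gamma^{\vep\mu}}$ into the pairing of a $\vep$-dependent total cocycle against the fixed $2$-cycle $\Sigma_1-\Sigma_2$, so that the $\vep$-derivative may be taken cell by cell.

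The heart of the computation then proceeds exactly as in \cite[Theorem 4.1]{TT03}. Differentiating the area term $\langle\omega[\vphi],F_1-F_2\rangle$ and using the first-variation identity \eqref{A2-formula} to annihilate the pointwise variation of the metric factors, an integration by parts leaves the bulk contribution $2\iint \mathcal{S}(J_i^{-1})(w)\,M(w)\,dw\wedge d\bar w$ over the fundamental domains, which is precisely the value of the weight-$4$ form $\vartheta=2\mathcal{S}(J_i^{-1})$ paired against $\mu$, i.e.\ the component of $\bm\vartheta$ in the direction of $\mu$. The variations of the $1$-cocycle term $\langle\check\theta[\vphi],L_1-L_2\rangle$ and the $0$-cocycle term $\langle\check u,W_1-W_2\rangle$, whose presence in \eqref{S qf} is dictated by the requirement that $\Psi[\vphi]$ be a total cocycle, combine with the boundary terms produced by the integration by parts and cancel them, so that $\del S_{\Gamma^{\vep\mu}}$ is independent of the choice of fundamental domains and reduces to the bulk term; the holomorphy of $F^{\vep\mu}$ in $\vep$ guarantees that only the $(1,0)$-part survives.

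The genuinely new input, absent from the compact case of \cite{TT03}, is the control of all integrals and boundary contributions in neighborhoods of the cusps $w_1,\dots,w_n$. Here I would invoke Lemma \ref{l:hyp-cusp}: in the cuspidal coordinate on $\mathbb{U}$ one has $\vphi\sim 2\log y$, which together with the asymptotics \eqref{as-1} shows that $J_i^{\ast}(\omega[\vphi])$ decays like $y^{-2}$, so that the area integral and its first variation are absolutely convergent near the cusps and no boundary terms survive in the limit. The same asymptotics show that $\vartheta=2\mathcal{S}(J_i^{-1})$ vanishes at $w_1,\dots,w_n$, so that $\vartheta$ is a genuine cusp form of weight $4$ and $\bm\vartheta$ is a well-defined $(1,0)$-form on $\frak{D}(\Gamma)$. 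I expect the main obstacle to be exactly this cuspidal analysis: one must justify interchanging $\del_\vep$ with the improper integrals over cuspidal neighborhoods and verify that the excised-neighborhood boundary integrals, the analogs of the terms $I_1,I_2,I_3$ in the proof of Theorem \ref{L-TZW}(ii), are $o(1)$. Once these estimates are in place, the computation agrees term by term with \cite[Theorem 4.1]{TT03} and yields $\del S=\bm\vartheta$.
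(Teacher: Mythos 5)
Your proposal is correct and follows essentially the same route as the paper: the paper's proof simply repeats that of Theorem 4.1 in \cite{TT03}, with the only new ingredient being a truncation of the fundamental domains $F_{1},F_{2}$ near the cusps so that Stokes' theorem applies, and an appeal to Lemma \ref{l:hyp-cusp} to show the resulting boundary terms vanish as the truncation parameter goes to zero. Your identification of the cuspidal estimates as the genuinely new input, and your use of Lemma \ref{l:hyp-cusp} both for convergence of the area integral and for the vanishing of $\vartheta$ at the cusps, matches the paper exactly.
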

The proof repeats that of Theorem 4.1 in \cite{TT03}. The only modification is a $\delta$-truncation of fundamental domains $F_{1}$ and $F_{2}$ near the cusps $w_{1},\dots,w_{n}$, needed for the application of Stokes' theorem. Lemma \ref{l:hyp-cusp} shows that in the limit $\delta\rightarrow 0$ the corresponding boundary terms vanish.

The next result is exact analog of Theorem 4.2 in \cite{TT03}
\begin{theorem} \label{qF case} The following formula holds on $\frak{D}(\Gamma)$,
\begin{align*}
d\bm\vartheta=\delb\del S=-2i\omega_{\mathrm{WP}},
\end{align*}
so that $-S$ is a \Ka potential of the WP metric on $\frak{D}(\Gamma)$.
\end{theorem}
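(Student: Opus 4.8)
The plan is to derive the statement from Theorem \ref{firstvariation} by the second-variation computation of \cite[Theorem 4.2]{TT03}. First, since $\bm\vartheta=\del S$, we have $\del\bm\vartheta=\del^{2}S=0$ by the relation $\del^{2}=0$, whence $d\bm\vartheta=\delb\bm\vartheta=\delb\del S$; this already gives the first equality in the statement. It then remains to compute the $(1,1)$-form $\delb\bm\vartheta$ and to identify it with $-2i\,\omega_{\mathrm{WP}}$.

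To do this I would evaluate $\delb\bm\vartheta$ on pairs of tangent vectors represented by harmonic Beltrami differentials $\mu,\nu$ for $\Gamma$ on $\Omega=\Omega_1\sqcup\Omega_2$. Taking $\mu,\nu$ to be the coordinate fields of a Bers-type chart on $\frak{D}(\Gamma)$, the Lie-bracket term drops out and $\delb\bm\vartheta(\mu,\bar\nu)$ reduces to the antiholomorphic derivative $\left.\del/\del\bar\vep\right|_{\vep=0}$ of the pairing $\langle\bm\vartheta,\mu\rangle=\iint\vartheta\,\mu$, where $\vartheta=2\mathcal{S}(J_i^{-1})$ is the family of weight-$4$ automorphic forms on $\Omega_i$. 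The variation of this integral is governed by the variation of the Schwarzian derivatives $\mathcal{S}(J_i^{-1})$ under the quasiconformal deformation, and its cross term produces the Petersson inner product $\langle\mu,\nu\rangle_{\mathrm{WP}}$ with precisely the constant needed to assemble $-2i\,\omega_{\mathrm{WP}}$. This is the content of the computation in \cite[Sect.~4]{TT03}.

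The hard part, and the sole departure from the compact case of \cite{TT03}, is the presence of the cusps $w_1,\dots,w_n$ of $\Gamma$. The variational argument is carried out by Stokes' theorem on the fundamental domains $F_1$ and $F_2$, which must now be truncated by removing small disks $\{|w-w_i|<\delta\}$ about each cusp, and the task is to show that the resulting integrals over the truncation circles vanish as $\delta\to 0$. This is exactly the $\delta$-truncation already made in the proof of Theorem \ref{firstvariation}, and it follows from the cusp asymptotics of Lemma \ref{l:hyp-cusp}: the estimate $\varphi=2\log|y|+O(1)$ at each cusp shows that $\vartheta$ vanishes there and forces the relevant integrands to decay fast enough that every boundary term dies in the limit. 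With these terms under control the remaining computation is identical, line by line, to that of \cite[Theorem 4.2]{TT03}, and yields $\delb\del S=-2i\,\omega_{\mathrm{WP}}$, so that $-S$ is a \Ka potential for the WP metric on $\frak{D}(\Gamma)$.
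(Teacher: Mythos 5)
Your proposal is correct and takes essentially the same route as the paper: the authors likewise reduce the statement to the second-variation computation of \cite[Theorem 4.2]{TT03} (building on Theorem \ref{firstvariation} and the argument of Theorem \ref{S-TZWP}), with the only new ingredient being the $\delta$-truncation of the fundamental domains near the cusps, whose boundary contributions are killed by the asymptotics of Lemma \ref{l:hyp-cusp}. No gaps to report.
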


The proof repeats that of Theorem \ref{S-TZWP} and uses Lemma \ref{l:hyp-cusp}. We leave details to the interested reader.

\section{Holography and renormalized volume} \label{Holography}

\subsection{Renormalized volume of Schottky $3$-manifolds}
Here we prove the holography principle, a precise relation between the renormalized hyperbolic volume of the corresponding Schottky $3$-manifold and the function $\cS=S-\pi \log H$, where $S$ is the regularized Liouville action and $H$ is the Hermitian metric in the line bundle $\cL$ over $\mathfrak{S}_{g,n}$ (see Sects.~\ref{S domains} and \ref{SS}). In case of the classical Liouville action on $\mathfrak{S}_{g}$, this relation was proved in \cite{Krasnov00} for classical Schottky groups and in \cite[Remark 6.2]{TT03} for the general case.

As in Sect.~\ref{S space}, let $\Sigma\subset \PSL(2,\mathbb{C})$ be marked normalized Schottky group with the region of discontinuity $\Omega\subset\hat{\mathbb{C}}$,
and let $M=\Sigma\backslash \mathbb{U}^3$ be the corresponding hyperbolic $3$-manifold with the conformal boundary at infinity $X=\Sigma\backslash \Omega$. Here $\mathbb{U}^{3}=\{(z,t) : z\in\C,\, t>0\}$ is the Lobachevsky (hyperbolic) space.

As in \cite[Sect.~5]{TT03}, let $\mathsf{K}_{\bullet,\bullet}=\SSS_{\bullet}\otimes_{\Z\Sigma}\BBB_{\bullet}$ be the corresponding double homology complex, where $\SSS_{\bullet}=\SSS_{\bullet}(\mathbb{U}^{3})$ is the singular chain
complex of $\mathbb{U}^{3}$ with the differential $\del'$ and $\BBB_{\bullet}=\BBB_{\bullet}(\Z\Sigma)$ is the standard bar resolution complex for $\Sigma$ with the differential $\del''$.

Let $R\subset\mathbb{U}^3$ be the  fundamental region for the marked Schottky group $\Sigma$ in $\mathbb{U}^3$, identified with $R\otimes[\, ]\in\mathsf{K}_{3,0}$.
We have $\del'' R=0$ and
\begin{align}
\del' R=&-D+\sum_{i=1}^g \left( H_i- L_i(H_i)\right)\label{e:del-R0}
=-D+\del'' S
\end{align}
where $D$ is the fundamental domain for $\Sigma$ in $\Omega$ as in Sect.~\ref{S space}, $H_i$ is a topological hemisphere\footnote{It is a Euclidean hemisphere when $\Sigma$ is a classical Schottky group.} with the boundary
 $C_i$, and
$S\in\mathsf{K}_{2,1}$ is defined by
\begin{align*}
S=-\sum_{i=1}^g H_i\otimes L_i^{-1}.
\end{align*}
Putting $L=\sum_{i=1}^g C_i\otimes L_i^{-1}\in\mathsf{K}_{1,1}$, we have $\del' S=-L$ and
\begin{align}\label{e:R-S}
\begin{split}
\del\left(R-S\right)=&\del' R- \del'S -\del''S\\
=&-D+\del''S+L-\del''S=-D+L.
\end{split}
\end{align}

 Let $e^{\varphi(w)} |dw|^2$ be the hyperbolic metric on  $\Omega_0=\Omega\setminus \Sigma\cdot\{w_1,\ldots, w_n\}$ (see Sect.~\ref{S space}). As in \cite[Lemma 5.1]{TT03}, there is a $\Sigma$ automorphic function $f\in C^{\infty}(\mathbb{U}^{3}\cup\Omega_{0})$ which is positive on $\mathbb{U}^{3}$ and uniformly on a compact subsets of $\Omega_{0}$ satisfies
 $$f(Z)=te^{\varphi(z)/2}+O(t^{3})\quad\text{as}\quad t\to 0,$$
 where $Z=(z,t)$. However near $(w_{i},0)$, as it follows from  \eqref{as-1}, the function $f$ satisfies
 \begin{equation} \label{f-w-i}
f(Z)=te^{\varphi(z)/2}+O(t^3|z-w_{i}|^{-2}) \qquad \text{as}\quad t\to 0,
\end{equation}
so that the level surface $f=\vep$ meets $(w_{i},0)$ and is non-compact. Hence in order to use $f$ as a level defining function for the truncated fundamental region $R\cap\{f\geq\vep\}$, one also needs to remove a neighborhoods in $\mathbb{U}^{3}$ of the points $(w_{1},0),\dots,(w_{n},0)$. Define
$$R_{\vep}=R\cap\{f\geq\vep\}\setminus\bigcup_{i=1}^{n}\left\{(z,t)\in\mathbb{U}^3\,| \, \|(z,t)-(w_{i},0)\|\leq \vep |a_{i}(1)|\right\},$$
where $\|~\|$ is the Euclidean distance in $\overline{\mathbb{U}}^{3}$
(cf. the definition of $S(\Sigma;w_1,\ldots, w_n)$ in Remark \ref{delta-i}.) As in \eqref{e:del-R0},
\begin{align}\label{e:del-R}
\del'R_\vep=-D_\vep+\sum_{i=1}^g \left( H_{i,\vep}- L_i(H_{i,\vep})\right),
\end{align}
where $D_{\vep}$ is the complement in a level surface $f(Z)=\vep$ of its intersection with $\cup_{i=1}^{n}\left\{\|Z-(w_{i},0)\|\leq \vep |a_{i}(1)|\right\}$, and $H_{i,\vep}=R_{\vep}\cap H_{i}$.

Following \cite{TT03}, we define the regularized volume of the Schottky $3$-manifold $M$ (the regularized on-shell Einstein-Hilbert action) by
\begin{equation*}
V_{\mathrm{reg}}(M)=\lim_{\vep\to 0}\left( V_\vep -\frac12 A_\vep -\frac12\pi n \left(\log\vep+2\log|\log\vep| \right) -\pi\, \chi(X)\log\vep \right),
\end{equation*}
where $V_{\vep}$ is the hyperbolic volume of $R_{\vep}$ and
$$
A_\vep=\iint_{D_\vep} dA,
$$
where $dA$ is the area form on $D_{\vep}$ induced by the hyperbolic metric on $\mathbb{U}^3$. Note that the only difference with the \cite[Def.~5.1]{TT03} is the extra subtraction of $\frac12\pi n \left(\log\vep+2\log|\log\vep| \right)$, which is due the fact that $f(Z)$ blows up as $Z\to (w_{i},0)$.

Repeating almost verbatim computations in \cite[Sect.~5.2]{TT03} and using \eqref{f-w-i}, we arrive at the following statement.
\begin{theorem} \label{S holography} Let $e^{\varphi(w)}|dw|^2$ be hyperbolic metric on $\Omega\setminus \Sigma\cdot\{w_1,\ldots, w_n\}$.  The regularized hyperbolic volume $V_{\mathrm{reg}}(M)$ of the Schottky $3$-manifold $M=\Sigma\backslash\mathbb{U}^{3}$ is well-defined and \begin{equation*}
V_{\mathrm{reg}}(M)=-\frac{1}{4}\cS+ \pi(g-1),
\end{equation*}
where $\cS$ is given by \eqref{L-Schottky}.
\end{theorem}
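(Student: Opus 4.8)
The plan is to adapt, essentially line by line, the renormalized volume computation of \cite[Sect.~5.2]{TT03} to the presence of the $n$ cusps $w_1,\dots,w_n$. The starting point is the homology identity \eqref{e:R-S}, $\del(R-S)=-D+L$, which identifies the boundary of the fundamental region $R$ in $\mathbb{U}^{3}$ with the fundamental domain $D\subset\Omega$ together with the boundary curves $C_k$ carried by $L$. First I would express the hyperbolic volume $V_\vep$ of $R_\vep$ as the integral of the hyperbolic volume form over $R_\vep$ and, using the special defining function $f$ of \cite[Lemma 5.1]{TT03} together with the homology splitting \eqref{e:del-R} of $\del R_\vep$, re-express it as a boundary integral over the level surface $D_\vep$ plus contributions from the hemisphere pieces $H_{i,\vep}=R_\vep\cap H_i$. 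The level-surface term produces $\tfrac12 A_\vep$ together with the bulk Liouville integral $\iint_D\omega(\varphi)$ in the limit $\vep\to0$, while the hemisphere term, after using the transformation law \eqref{phi-transform} for $\varphi$, collapses onto the curves $C_k$ and reproduces exactly the line integrals $\sum_{k=2}^g\int_{C_k}\theta_{L_k^{-1}}(\varphi)$ entering $\cS$.

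The bookkeeping of the divergent terms is where the punctures genuinely enter. Near each point $(w_i,0)$ the defining function $f$ no longer satisfies $f=te^{\varphi/2}+O(t^3)$ but instead blows up according to \eqref{f-w-i}, so the level set $f=\vep$ becomes non-compact and must be truncated by excising the Euclidean balls $\{\,\|Z-(w_i,0)\|\le\vep|a_i(1)|\,\}$ built into the definition of $R_\vep$. I would isolate the contributions of these excised balls, together with the annular collars where the level surface $D_\vep$ is cut off near $w_i$, and show --- using the asymptotics \eqref{as-1} of $\varphi$ --- that they produce precisely the divergences $\tfrac12\pi n(\log\vep+2\log|\log\vep|)$ together with the term $\pi\chi(X)\log\vep$ subtracted in the definition of $V_{\mathrm{reg}}(M)$. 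This is exactly what makes $V_{\mathrm{reg}}(M)$ well-defined. The topological constant $\pi(g-1)$ in the finite part arises in the same way as in the compact computation of \cite{TT03} and is unaffected by the cusps.

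The final and most delicate point is to explain why the finite part is governed by $\cS=S-\pi\log H$ rather than by $S$ alone. The key observation is already recorded in Remark \ref{delta-i}: excising balls of \emph{Euclidean} radius $\vep|a_i(1)|$ around $(w_i,0)$ corresponds, on the conformal boundary, to deleting the disks $D(w_i;\delta_i)$ of radius $\delta_i=|a_i(1)|\vep$, and by \eqref{L-Schottky-2} this rescaling of the cut-off by the first Fourier coefficients $|a_i(1)|$ is precisely what turns the bulk regularization of $S$ into the modular-invariant combination $\cS$. Assembling the finite parts from the previous two paragraphs and matching the $|a_i(1)|$-dependence, I expect to obtain $V_{\mathrm{reg}}(M)=-\tfrac14\cS+\pi(g-1)$. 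The main obstacle, as in every holographic computation of this kind, is controlling the interchange of the limit $\vep\to0$ with the truncations near the cusps; here the uniform estimate \eqref{f-w-i} for $f$ near $(w_i,0)$ is precisely the tool guaranteeing that the excised-ball contributions converge to the stated subtractions and that no spurious finite term survives.
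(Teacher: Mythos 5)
Your proposal follows essentially the same route as the paper, whose own proof consists of invoking the computation of \cite[Sect.~5.2]{TT03} together with the modified asymptotics \eqref{f-w-i} of the defining function near the cusps; you correctly identify the two genuinely new ingredients, namely the extra subtraction $\tfrac12\pi n(\log\vep+2\log|\log\vep|)$ forced by \eqref{f-w-i} and the excision of balls of radius $\vep|a_i(1)|$, which via Remark \ref{delta-i} and \eqref{L-Schottky-2} is exactly what replaces $S$ by $\cS=S-\pi\log H$ in the finite part. No gap; your write-up is in fact more explicit than the paper's.
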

\begin{remark} Equivalently, the regularized volume $V_{\mathrm{reg}}(M)$ is $-1/4$ times the function $\check{\cS}=\check{S}-\pi\log H$,
where $\check{S}$ is the Liouville action without the area term.
\end{remark}

\subsection{Renormalized volume of quasi-Fuchsian $3$-manifolds} Here we define the renormalized hyperbolic volume of quasi-Fuchsian $3$-manifolds and establish its relation with the classical Liouville action in Sect.~\ref{s:q-Fuchsian}. For the renormalized hyperbolic volume, another approach to the case of geometrically finite hyperbolic 3-manifolds was developed in \cite{GMR}.

\subsubsection{Rank one cusps} Let $\Gamma$ be marked, normalized, quasi-Fuchsian group of type $(g,n)$ and let $\lambda_1,\dots,\lambda_{n}$ be its parabolic generators with fixed points $v_1,\dots,v_{n}\in\mathcal{C}$ (see Sect.~\ref{s:q-Fuchsian}). Since the stabilizer of a parabolic fixed point $v_{k}$ in $\Gamma$ is a cyclic subgroup $\langle\lambda_{k}\rangle$, it is a \emph{rank one cusp}. Denote by  $M=\Gamma\backslash \mathbb{U}^3$ the corresponding quasi-Fuchsian $3$-manifold and let $X \sqcup Y =\Gamma\backslash \Omega_1\sqcup\Omega_2$ be its conformal boundary at infinity.

If $\lambda (z)=z+1$, there exists a $s_0>0$ such that
the image of the projection $\pi:\mathbb{U}^3\to M$ of an open horoball
$$
\mathcal{H}_{s}=\{(z,t)\in\mathbb{U}^3\, |\, t> s\}
$$
is embedded into $M$ for $s\geq s_0$. In this case, $\pi(\mathcal{H}_{s})\subset M$ is homeomorphic to $\{ 0<|z|< 1\}\times \mathbb{R}$ and
$\pi(\{(z,t)\in\mathbb{U}^3\, |\, t= s\})$ corresponds to $\{ |z|=1\}\times \mathbb{R}$. The  set $\pi(\mathcal{H}_{s})$ is called a \emph{solid cusp tube}.

In general, if a rank one cusp $v=\infty$ is associated to the parabolic subgroup generated by
$\lambda=\begin{pmatrix} 1 & q\\ 0 & 1 \end{pmatrix}$, we have
\begin{align}\label{e:sigma}
\sigma^{-1} \lambda\,\sigma=\begin{pmatrix} 1 & 1 \\ 0 & 1\end{pmatrix}\qquad \text{where}\quad
\sigma=\begin{pmatrix} q^{\frac12} & 0\\ 0 & q^{-\frac12} \end{pmatrix},
\end{align}
and $\sigma$ maps an open horoball $\mathcal{H}_s$
 onto $\mathcal{H}_{|q|s}$. In this case, the corresponding solid cusp tube is $\pi(\mathcal{H}_{|q|s})$.
When a rank one cusp $v_i$ is finite and is associated with the parabolic subgroup generated by
\begin{align*}
\lambda_i=\begin{pmatrix}
1+q_iv_i & -q_i v_i^2\\ q_i & 1-q_iv_i \end{pmatrix},
\end{align*}
we have
\begin{align}\label{e:sigma2}
\sigma_i^{-1}\lambda_i \sigma_i = \begin{pmatrix} 1 & -1 \\ 0 & 1 \end{pmatrix} \qquad \text{where} \quad
\sigma_i=\begin{pmatrix} q_i^{\frac12}v_i & -q_i^{-\frac12}\\ q_i^{\frac12} & 0 \end{pmatrix}.
\end{align}
It is easy to see that $\sigma_i(\mathcal{H}_s)$ is an open horoball tangent to $\mathbb{C}$ at $\sigma_i(\infty)=v_i$, which is an Euclidean ball  with radius of
$(2|q_i|s)^{-1}$, and the corresponding solid cusp tube is $\pi(\sigma_{i}(\mathcal{H}_{s}))$. In our case the normalization of $\Gamma$ is such that all cusps are finite and the solid cusp tubes corresponding to $v_{i}$ can be chosen to be mutually disjoint in $M$. We denote $\mathcal{H}_{i,\vep}=\sigma_i(\mathcal{H}_{1/\vep^2})$, $i=1,\dots,n$.

\subsubsection{Truncation of a fundamental region}\label{ss:truncation} Let $R\subset \mathbb{U}^3$ be a fundamental region for $\Gamma$ in $\mathbb{U}^{3}$.
Put $\mathcal{H}_{\vep_0}=\cup_{i=1}^n ({\mathcal{H}}_{i,\vep_0}\setminus \overline{\mathcal{P}}_{i,\vep_0})$, where $\mathcal{P}_{i,\vep}=\sigma_i(\mathcal{P}_\vep)$ and
\begin{align*}
\mathcal{P}_\vep:=\{(z,t)\in\mathbb{U}^{3}\,|\, z=x+iy, |y| >\vep^{-1} \}.
\end{align*}
The proof of $\Gamma$-automorphic partition of unity in \cite[Lemma V.3.1]{Kra72} can be easily adapted to the case of Kleinian groups with parabolic elements. As in \cite[Lemma 5.1]{TT03}, we conclude that there exist $\vep_{0}>0$ and a $\Gamma$-automorphic function $f\in C^{\infty}(\mathbb{U}^{3}_{0})$, where $\mathbb{U}^{3}_{0}=\cup_{\gamma\in\Gamma}\gamma(R\setminus\mathcal{H}_{\vep_{0}})$, satisfying
\begin{align}\label{e:f-asymp}
f(Z)=te^{\varphi(z)/2}+O(t^3) \quad \text{as}\quad t\to 0,
\end{align}
uniformly on compact subsets of $\mathbb{U}^{3}_{0}$. Here $e^{\varphi(z)}|dz|^{2}$ is the hyperbolic metric on $\Omega_1\sqcup\Omega_2$ (see Sect.~\ref{s:q-Fuchsian}).

Using the level defining function $f$ we truncate a non-compact fundamental region $R$ as follows:
\begin{align*}R_{\vep}=R\setminus \left( \left\{Z\in R\setminus \mathcal{H}_{\vep_{0}} : f(Z)\leq \vep\right\}\, \cup\, \bigcup_{k=1}^n  \mathcal{H}_{k,\vep}\right).
\end{align*}
Similar to the Schottky case, we define a renormalized hyperbolic volume of the quasi-Fuchsian $3$-manifold $M$ by
\begin{align*}
V_{\mathrm{reg}}(M)=&\lim_{\vep\rightarrow 0} \left(V_{\vep}-\frac{1}{2}A_{\vep}-\pi\,\chi(X\sqcup Y)\log\vep\right),
\end{align*}
where $V_{\vep}$ is the hyperbolic volume of $R_{\vep}$ and $A_{\vep}$ is the area of the surface $ -F_{\vep}=\pa' R_{\vep}\cap\{f=\vep\}$ in the induced metric. Repeating computation in \cite{TT03} and analyzing the extra terms due to the removal of a solid cusp tubes from $M$, on can show that their contribution vanishes as $\vep\to 0$. Thus we arrive at the following statement.
\begin{theorem} \label{qF holography} Let $e^{\varphi(z)}|dz|^2$ be the hyperbolic metric on $\Omega_1\sqcup \Omega_2$.  The regularized hyperbolic volume $V_{\mathrm{reg}}(M)$ of the quasi-Fuchsian $3$-manifold $M=\Gamma\backslash\mathbb{U}^3$ is well-defined and
\begin{equation*}
V_{\mathrm{reg}}(M)=-\frac{1}{4}\check{S}_{\Gamma},
\end{equation*}
where $\check{S}_{\Gamma}$ is the Liouville action \eqref{S qf} without the area term,
$$\check{S}_{\Gamma}=S_{\Gamma}-\iint_{F}e^{\varphi}d^2z+4\pi \chi(X\sqcup Y)\log 2.$$
\end{theorem}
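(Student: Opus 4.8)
The plan is to follow, almost verbatim, the computation in \cite[Sect.~5.2]{TT03} underlying the proof of Theorem \ref{S holography}, and to control the genuinely new boundary contributions coming from the removal of the solid cusp tubes $\mathcal{H}_{k,\vep}$. The starting point is the homology relation \eqref{e:R-S}, $\del(R-S)=-D+L$, which expresses the three-dimensional fundamental region $R$ in $\mathbb{U}^3$ in terms of the boundary fundamental domain $D$ and the lateral identification chain $L$. Pairing this relation with the cochain built from the hyperbolic volume form and using the level defining function $f$ of \eqref{e:f-asymp}, one rewrites the hyperbolic volume $V_{\vep}$ of the truncated region $R_{\vep}$ as a sum of an integral over the level surface $\{f=\vep\}$ and the lateral terms attached to $L$.

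First I would carry out the bulk part of the computation exactly as in \cite{TT03}. Integrating the volume form $t^{-3}\,dx\,dy\,dt$ in the fibre direction down to the level surface $t\simeq\vep\,e^{-\varphi/2}$ and expanding by means of \eqref{e:f-asymp}, the leading $\vep^{-2}$ divergence of $V_{\vep}$ is matched precisely by $\tfrac12 A_{\vep}$ and is thereby removed. The next term reproduces the Liouville bulk integral $\iint_{F}e^{\varphi}\,d^2z$ together with the lateral structure of $S_{\Gamma}$, and the constant $4\pi\chi(X\sqcup Y)\log 2$ appearing in $\check S_{\Gamma}$ arises from the $-2\log 2$ normalization built into $\check\theta_{\gamma^{-1}}[\vphi]$ in \eqref{e:check-theta}. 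The residual $\log\vep$-divergence is proportional to $\chi(X\sqcup Y)$, by the Gauss--Bonnet theorem applied to the boundary surfaces $X$ and $Y$, and is cancelled by the subtraction $-\pi\chi(X\sqcup Y)\log\vep$ in the definition of $V_{\mathrm{reg}}(M)$. Collecting the finite remainders produces the overall factor $-\tfrac14$ relating $V_{\mathrm{reg}}(M)$ to the Liouville action, exactly as in the compact case of \cite{TT03}.

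The main obstacle, and the only essentially new point, is to show that the removal of the solid cusp tubes contributes nothing in the limit $\vep\to 0$. The truncation $R_{\vep}=R\setminus(\{f\le\vep\}\cup\bigcup_{k}\mathcal{H}_{k,\vep})$ introduces, besides the level surface $\{f=\vep\}$, the lateral horospherical boundaries $\del\mathcal{H}_{k,\vep}$, and I must verify that both the hyperbolic volume excised by the tubes and the boundary integrals over $\del\mathcal{H}_{k,\vep}$ tend to zero. The key input is Lemma \ref{l:hyp-cusp}, which gives $\varphi\circ J_{j}\circ\sigma_{ji}=2\log|y|+O(1)$ in each cusp; conjugating a rank one cusp to $\infty$ by $\sigma_{i}$ as in \eqref{e:sigma2}, so that $\mathcal{H}_{i,\vep}=\sigma_i(\mathcal{H}_{1/\vep^2})$ becomes a standard horoball, one estimates the volume of $\mathcal{H}_{i,\vep}\cap R$ and the induced area of its horospherical boundary directly, and the cusp asymptotics of $\varphi$ force these quantities to be $o(1)$. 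This is also precisely why, in contrast to the Schottky case of Theorem \ref{S holography}, no extra $\tfrac12\pi n(\log\vep+2\log|\log\vep|)$ subtraction is required: the horoball truncation is adapted to the parabolic geometry, and $f$ obeys the clean expansion \eqref{e:f-asymp} on all of $\mathbb{U}^3_0$. Once these boundary terms are shown to vanish, the finite part is $-\tfrac14\check S_{\Gamma}$, and the well-definedness of $V_{\mathrm{reg}}(M)$ follows from the existence of all the limits established along the way.
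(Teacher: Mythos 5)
Your proposal follows essentially the same route as the paper, which itself only sketches the argument by repeating the computation of \cite[Sect.~5.2]{TT03} and observing that the extra contributions from removing the solid cusp tubes vanish as $\vep\to 0$; your treatment of those tubes via Lemma \ref{l:hyp-cusp} and the conjugations \eqref{e:sigma2}, and your explanation of why no $\tfrac12\pi n(\log\vep+2\log|\log\vep|)$ counterterm is needed here, is precisely the point the paper leaves to the reader. One minor caveat: the homological identity \eqref{e:R-S} you invoke is the Schottky one, whereas in the quasi-Fuchsian setting the correct analogue carries the additional vertex chains $V_{1},V_{2}$ and the $W$-terms responsible for the $\check{u}$ contributions in \eqref{S qf}; this does not affect the substance of your argument.
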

Note that in this case the statement of the theorem is exactly the same as in the compact case \cite[Theorem 5.1]{TT03}. We leave details to the interested reader.

\end{document}